\newcommand{\red}[0]{\operatorname{red}}
\newcommand{\Z}{{\mathbb{Z} }}
\newcommand{\cX}{{\mathcal{X}}}
\newcommand{\cY}{{\mathcal{Y}}}
\newcommand{\cZ}{{\mathcal{Z}}}
\newcommand{\Supp}{{\rm Supp}}
\newcommand{\ord}{{\rm ord}}
\newcommand{\gr}{{\rm gr}}
\newtheorem{thm}{Theorem}[section]
\newtheorem{lem}[thm]{Lemma}
\newtheorem{cor}[thm]{Corollary}
\newtheorem{prop}[thm]{Proposition}
\theoremstyle{definition}
\newtheorem{defn}[thm]{Definition}
\newtheorem{rem}[thm]{Remark}
\newtheorem{defn-thm}[thm]{Definition--Theorem}  %!!!!!!!!!!!!!!!!!!!!!!!!
\newtheorem{defn-prop}[thm]{Definition--Proposition}  %!!!!!!!!!!!!!!!!!!!!!!!!
\newtheorem{defn-lem}[thm]{Definition--Lemma}  %!!!!!!!!!!!!!!!!!!!!!!!!
\theoremstyle{remark}
\newcommand{\Proj}{{\rm Proj}}
\newcommand{\Val}{{\rm Val}}
\newcommand{\vol}{{\rm vol}}
\newcommand{\cD}{{\mathcal{D}}}
\newcommand{\cO}{{\mathcal{O}}}
\newcommand{\fa}{{\mathfrak{a}}}
\newcommand{\lct}{{\rm lct}}
\newcommand{\bQ}{{\mathbb{Q}}}
\newcommand{\Q}{{\mathbb{Q}}}
\newcommand{\N}{{\mathbb{N}}}
\newcommand{\fb}{{\mathfrak{b}}}
\newcommand{\fX}{\mathfrak{X}}
\newcommand{\Spec}{{\rm Spec}}
\renewcommand{\d}{\delta}
\newcommand{\Exc}{{ \rm Exc}}
\newcommand{\la}{\lambda}
\newcommand{\cF}{\mathcal{F}}
\newcommand{\QM}{{\rm QM}}
\newcommand{\R}{{\mathbb{R}}}
\newcommand{\CX}[1]{{\textcolor{blue}{[Chenyang: #1]}}}
\newcommand{\fm}{\mathfrak{m}}
\newcommand{\cL}{\mathcal{L}}
\newcommand{\tc}{\mathrm{tc}}
\newcommand{\bin}{\mathbf{in}}
\newcommand{\bG}{\mathbb{G}}
\newcommand{\bA}{\mathbb{A}}
\newcommand{\sslash}{\mathbin{/\mkern-6mu/}}
\newcommand{\bmu}{\bm{\mu}}
\numberwithin{equation}{section}
\newcommand\numberthis{\addtocounter{equation}{1}\tag{\theequation}}
\begin{document}

\pagenumbering{arabic}

\title[Openness of K-semistability]{Openness of K-semistability for Fano varieties
}
\date{\today}

\author{Harold Blum}
\address{Department of Mathematics, Stony Brook University, Stony Brook, NY 11794, USA}
\email{harold.blum@stonybrook.edu}

\author{Yuchen Liu}
\address{Department of Mathematics, Northwestern University, Evanston, IL 60208, USA.}
%\email{yuchen.liu@yale.edu}
%\email{yuchenl@math.princeton.edu}
\email{yuchenl@northwestern.edu}

\author {Chenyang Xu}
\address{Department of Mathematics, Princeton University, Princeton, NJ 08544, USA.}
\email{chenyang@princeton.edu}
\address   {MIT,       Cambridge 02139, USA}
\email     {cyxu@math.mit.edu}
\address   {Beijing International Center for Mathematical Research,       Beijing 100871, China}
\email     {cyxu@math.pku.edu.cn}

\begin{abstract}{
In this paper, we prove the openness of K-semistability in families of log Fano pairs by showing that the stability threshold is a constructible function on the fibers.
We also prove that any special test configuration arises from a log canonical place of a bounded complement and establish properties of any minimizer of the stability threshold. 
}
\end{abstract}

\maketitle{}
\setcounter{tocdepth}{1}
\tableofcontents

{\let\thefootnote\relax\footnotetext{
HB was partially supported by NSF grant DMS-1803102. 
YL was partially supported by the Della Pietra Endowed Postdoctoral Fellowship of the MSRI (NSF No. DMS-1440140).
CX was partially supported by a Chern Professorship of the MSRI (NSF No. DMS-1440140), NSF grant DMS-1901849 and FRG Grant DMS-1952531.
}
\marginpar{}
}

\section{Introduction}\label{s-intro}

 Throughout the paper, we work over an algebraically closed characteristic zero field. 
\smallskip

K-stability was invented as an algebraic condition to characterize when a Fano variety admits a K\"ahler-Einstein metric (see \cite{Tia97, Don02}). In  recent years, the question of whether one can construct the moduli space parametrizing K-polystable $\bQ$-Fano varieties with fixed numerical invariants, as well as establish nice properties for it, has attracted significant interest.  Previously, the construction of the moduli space  relied on the properties, especially the existence, of K\"ahler-Einstein metrics (see e.g. \cite{LWX19}). Nevertheless, using the valuative criterion developed in \cite{Fuj19, Li17}, a purely algebro-geometric approach has been dramatically advanced. See \cite{BX18, ABHLX19} for more background. 

This paper aims to settle one of the main steps of the construction. Namely, we prove that in a $\mathbb{Q}$-Gorenstein family $(X,\Delta)\to B$ of log Fano pairs over a normal base, the locus $B^{\circ}\subset B$  parametrizing K-semistable fibers is a Zariski open set. This is the last ingredient needed to conclude that the moduli space of K-polystable $\mathbb{Q}$-Fano varieties exists. See Theorem \ref{t-moduli} for a more precise statement.

\subsection{Main theorems}
Before proving the openness result, we first establish the following property of the stability threshold as well as Tian's $\alpha$-invariant. 

\begin{thm}\label{t-constructible}
If $(X,\Delta)\to B$ is a $\Q$-Gorenstein family of log Fano pairs
over a normal base $B$, then
the functions
\[
B \ni b \mapsto \min\{ \alpha(X_{\overline{b}},\Delta_{\overline{b}}),1\}
\quad
\text{ and }
\quad
B \ni b \mapsto \min\{\delta(X_{\overline{b}},\Delta_{\overline{b}}),1 \}
\]
are constructible and lower semicontinuous. 
\end{thm}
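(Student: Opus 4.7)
The plan is to reduce the computation of $\min\{\alpha,1\}$ and $\min\{\delta,1\}$ to an infimum taken over a bounded family of divisorial valuations, and then to analyze how the relevant invariants vary along that family.

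\textbf{Step 1 (Reduction to bounded complements).} Fix an integer $N$ depending only on $\dim X$ and the coefficient set of $\Delta$ such that every geometric fiber $(X_{\bar b},\Delta_{\bar b})$ admits an $N$-complement; such $N$ exists by Birkar's theorem. Invoking the main structural result emphasized in the abstract, whenever $\delta(X_{\bar b},\Delta_{\bar b})\leq 1$ the infimum
$$\delta(X_{\bar b},\Delta_{\bar b})=\inf_{E}\frac{A_{X_{\bar b},\Delta_{\bar b}}(E)}{S_{X_{\bar b},\Delta_{\bar b}}(E)}$$
may be restricted to divisorial valuations $E$ that are log canonical places of some $N$-complement of $(X_{\bar b},\Delta_{\bar b})$. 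The analogous statement holds for $\alpha\leq 1$ with $S$ replaced by the pseudo-effective invariant $T$ (since $\alpha\leq 1$ is detected by the existence of a non-klt $N$-complement).

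\textbf{Step 2 (Parametrization of complements and of lc places).} After a Noetherian stratification of $B$, I parametrize all $N$-complements of fibers by a finite-type morphism $\pi\colon\cC\to B$ equipped with a universal family of complements $(X_\cC,\Delta_\cC+D)\to\cC$. After further stratifying $\cC$, a simultaneous log resolution $\mu\colon\mathcal{Y}\to X_\cC$ of this family produces finitely many components $E_1,\dots,E_r$ of the relative reduced exceptional-plus-boundary divisor, each flat over $\cC$, such that for every $c\in\cC$ the fibers $E_{i,c}$ enumerate the lc places of the complement over $\pi(c)$.

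\textbf{Step 3 (Constructibility of $A/S$ and $A/T$ along the families).} For each $E_i$, the log discrepancy $A_{X_\cC,\Delta_\cC}(E_i)$ is locally constant on $\cC$. Constructibility of $c\mapsto T(E_{i,c})$ follows from flat variation of $\pi_*\mathcal{O}(-m(K_{X_\cC/\cC}+\Delta_\cC))$ and of its $\ord_{E_i}$-filtration. Constructibility of $c\mapsto S(E_{i,c})$ follows from the analogous analysis of the filtered pieces, combined with a uniform convergence $S_m\to S$ obtained from volume comparisons within a bounded family of log Fano pairs.

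\textbf{Step 4 (Assembly and semicontinuity).} Pushing the picture down via $\pi$, the function $b\mapsto\min\{\delta(X_{\bar b},\Delta_{\bar b}),1\}$ equals the minimum of $1$ and the infimum over $c\in\pi^{-1}(b)$ and $i\in\{1,\dots,r\}$ of the constructible function $A(E_i)/S(E_i)$. Since the fiberwise infimum of a constructible function along a finite-type morphism is constructible, constructibility of the invariant follows; the identical argument treats $\alpha$. Lower semicontinuity holds because every lc place on a specialized fiber extends to an lc place on nearby fibers through the families $E_i$, so generization only enlarges the infimum. The main obstacle is Step 3, namely a sufficiently uniform comparison between $S$ (resp.\ $T$) and its $m$-th truncation along a bounded family of divisorial valuations; Step 2 is also delicate but essentially packages Birkar's boundedness into a scheme-theoretic moduli of complements.
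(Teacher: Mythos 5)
Your Steps 1--2 follow the paper's skeleton (restrict the infimum to lc places of $N$-complements via Birkar's boundedness, then parametrize the complements by a locally closed subscheme of $\mathbb{P}(f_*\mathcal{O}(-rN(K_{X/B}+\Delta)))$ and stratify until fiberwise log resolutions exist), but there are two genuine gaps in Steps 2--3. First, for a fixed complement $D_c$ the lc places are \emph{not} enumerated by the finitely many components $E_1,\dots,E_r$ of the coefficient-one divisor on the resolution: they form the whole simplicial cone complex $\QM(Y_c,\Gamma_c^{=1})$, whose rational points are the divisorial lc places appearing in the reduction of Step 1. Taking the minimum of $A/S$ only over the vertices $E_{i,c}$ produces a number that is in general strictly larger than $\delta$, so the function you assemble in Step 4 need not equal $\min\{1,\delta\}$. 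The paper handles this by proving that $S$ and $T$ are continuous on $\QM(Y,E)$ (which itself requires a Newton-polygon finiteness argument for the functions $D\mapsto(v\mapsto v(D))$), so that constancy at the rational points of the cone complex propagates to constancy of the infimum over the whole complex, where the minimizer is typically an irrational quasi-monomial valuation. Second, your route to constructibility of $c\mapsto S(E_{i,c})$ does not work as stated: the dimensions $h^0(Y_c, m\mu_c^*L-\lceil\lambda E_{i,c}\rceil)$ are not controlled by flatness or any vanishing theorem and can jump, and a uniform approximation $S_m\to S$ by functions that are only constructible stratum-by-stratum (with strata depending on $m$) does not force the limit to be constructible. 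The actual engine is the deformation invariance of log plurigenera of Hacon--McKernan--Xu, applied after rewriting $-\mu^*(K+\Delta)-tF$ as an adjoint divisor $K_Y+\Gamma_1+g^*H-dF$ with boundary coefficients in $[0,1)$; this is exactly where the hypothesis $A_{X,\Delta+D}(F)<1$, i.e.\ that $F$ is close to an lc place of the complement, is used, and it yields \emph{exact} invariance of $\vol(-\mu_b^*(K_{X_b}+\Delta_b)-tF_b)$ on each stratum rather than an approximate comparison.

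Separately, your semicontinuity argument runs in the wrong direction: extending lc places from a special fiber to nearby fibers shows that the infimum at the generic point is at most the value at the special point, which is the statement that the function does not \emph{decrease} under specialization; lower semicontinuity requires that it does not \emph{increase} under specialization, i.e.\ one must degenerate (near-)minimizers from the generic fiber to the special fiber while controlling $S$. The paper does not reprove this; it quotes the lower semicontinuity theorem of Blum--Liu and combines it with constructibility. If you repair Step 2 by working on the full cone complex of lc places, replace the flatness argument in Step 3 by the invariance of log plurigenera, and cite the correct semicontinuity input, your outline becomes the paper's proof.
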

Recall that the $\alpha$-invariant of a log Fano pair $(X,\Delta)$ was introduced in \cite{Tia87} and the stability threshold (also known as the $\delta$-invariant) in \cite{FO18}. 
It was shown in \cite{FO18,BJ17} that $\delta(X,\Delta)\ge 1$ if and only if $(X,\Delta)$ is K-semistable, based on the valuative criteria for K-semistability proved in \cite{Fuj19, Li17}. Therefore, we have the following immediate corollary of Theorem \ref{t-constructible}.

\begin{cor}\label{c-openness}
If $(X,\Delta)\to B$ is a  $\Q$-Gorenstein 
family of log Fano pairs
over a normal base $B$, then 
\[
B^\circ: = \{ b \in B \, \vert \, (X_{\overline{b}},\Delta_{\overline{b}})
\text{ is  K-semistable} \}
\]
is a Zariski open subset of $B$. 
\end{cor}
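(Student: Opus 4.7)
The plan is a short formal deduction from Theorem \ref{t-constructible} combined with the valuative characterization of K-semistability. By \cite{FO18, BJ17}, building on the valuative criteria of \cite{Fuj19, Li17}, a log Fano pair is K-semistable precisely when its stability threshold is at least $1$. Setting $f(b) := \min\{\delta(X_{\overline{b}}, \Delta_{\overline{b}}), 1\}$, we therefore have
\[
B^\circ = \{b \in B : \delta(X_{\overline{b}}, \Delta_{\overline{b}}) \geq 1\} = \{b \in B : f(b) = 1\},
\]
where the last equality uses that $f \leq 1$ pointwise.

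It remains to show that $\{f = 1\}$ is Zariski open. By Theorem \ref{t-constructible}, $f$ is both constructible and lower semicontinuous on the (Noetherian) base $B$. Constructibility forces $f$ to attain only finitely many values, so if $f$ never equals $1$ then $B^\circ = \emptyset$ and we are done; otherwise let $c_0$ be the largest value attained by $f$ that is strictly less than $1$ (and take any $c_0 < 1$ if no such value exists, in which case $f \equiv 1$ and $B^\circ = B$). Then
\[
B^\circ = \{b \in B : f(b) > c_0\},
\]
which is Zariski open by lower semicontinuity of $f$.

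There is no genuine obstacle to overcome at this step: the Corollary is an essentially tautological unwinding of Theorem \ref{t-constructible} together with the Fujita--Odaka/Blum--Jonsson valuative criterion. All of the real difficulty is concentrated in Theorem \ref{t-constructible} itself, and in particular in establishing constructibility of the stability threshold in families. As flagged in the introduction, that in turn is expected to draw on the paper's results concerning special test configurations arising from log canonical places of bounded complements, as well as properties of minimizers of $\delta$.
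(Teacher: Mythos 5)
Your proposal is correct and follows the paper's own argument: identify $B^\circ$ with $\{\min\{\delta,1\}=1\}$ via the valuative criterion of K-semistability, then conclude openness from Theorem \ref{t-constructible}. You merely spell out the elementary last step (finitely many values by constructibility, so $\{f=1\}=\{f>c_0\}$ is open by lower semicontinuity) that the paper leaves implicit.
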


Together with the main theorems in \cite{Jia19, BX18, ABHLX19}, we deduce 
\begin{thm}\label{t-moduli}
 The moduli functor $\mathfrak{X}^{\rm Kss}_{V,n}$ of K-semistable $\Q$-Fano varieties of dimension $n$ and volume $V$ is an Artin stack of finite type over $k$ and admits a separated good moduli space $\mathfrak{X}^{\rm Kss}_{V,n} \to X^{\rm Kps}_{V,n}$, whose  $k$-points parameterize  K-polystable $\Q$-Fano varieties of dimension $n$ and volume $V$.
\end{thm}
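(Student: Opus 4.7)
The plan is to assemble the openness theorem just established with three deep results from the literature: boundedness of K-semistable $\Q$-Fanos of fixed volume from \cite{Jia19}, uniqueness of the K-polystable degeneration from \cite{BX18}, and the $\Theta$-reductivity and S-completeness of the K-moduli stack proved in \cite{ABHLX19}.

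First I would invoke \cite{Jia19}, which shows that the class of K-semistable $\Q$-Fano varieties of dimension $n$ and volume $V$ is bounded, to produce a $\Q$-Gorenstein family $(X,\Delta)\to Z$ of log Fano pairs with $Z$ of finite type whose geometric fibers include every such variety (e.g., via an appropriate Hilbert scheme after simultaneously embedding by a uniform multiple of $-K$). By Corollary \ref{c-openness}, the K-semistable locus $Z^{\circ}\subset Z$ is Zariski open, hence itself of finite type. A standard quotient construction then realizes $\mathfrak{X}^{\rm Kss}_{V,n}$ as the quotient stack $[Z^{\circ}/\mathrm{PGL}_N]$ for the natural linear group action coming from the choice of embedding, thereby endowing it with the structure of an Artin stack of finite type over $k$.

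Next I would apply the existence criterion for good moduli spaces of Alper--Halpern-Leistner--Heinloth, in the form tailored to K-stability in \cite{ABHLX19}, to produce the good moduli space $\mathfrak{X}^{\rm Kss}_{V,n}\to X^{\rm Kps}_{V,n}$. Its hypotheses---$\Theta$-reductivity, S-completeness, and reductivity of stabilizers at closed points (which correspond to K-polystable fibers)---are exactly what is verified in \cite{ABHLX19}. Separatedness of $X^{\rm Kps}_{V,n}$ then follows from the uniqueness of the K-polystable degeneration proved in \cite{BX18}, which supplies the valuative-criterion input needed for the good moduli space to be separated. The identification of $k$-points of $X^{\rm Kps}_{V,n}$ with K-polystable $\Q$-Fano varieties is the standard correspondence between closed points of a good moduli space and closed points of the underlying stack.

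The main obstacle, now removed, is precisely the openness result: without Corollary \ref{c-openness}, one could only describe the K-semistable locus set-theoretically inside the parameter space of $\Q$-Gorenstein families of log Fano pairs, and would have no finite-type Artin stack on which the machinery of \cite{ABHLX19} could be deployed. With openness in hand, the proof of Theorem \ref{t-moduli} becomes essentially a bookkeeping assembly of the four ingredients above.
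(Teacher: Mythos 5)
Your proposal is correct and follows essentially the same route as the paper: the paper's proof likewise combines the boundedness of \cite{Jia19} with Corollary \ref{c-openness} (via the argument of \cite[Cor. 1.4]{BX18}) to get a finite-type Artin stack, and then invokes \cite[Cor. 1.2]{ABHLX19} for the separated good moduli space, whose existence criteria rest on exactly the ingredients you name. Your write-up simply unpacks the content of those two citations in more detail.
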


In fact, 
the boundedness of $\mathfrak{X}^{\rm Kss}_{V,n}$
was settled in \cite{Jia19}, which heavily relied on results in \cite{Bir16a}. Corollary \ref{c-openness} then implies that  the moduli functor $\mathfrak{X}^{\rm Kss}_{V,n}$ is an Artin stack of finite type over $k$. 
With the latter step completed, it follows from the main theorems in \cite{BX18, ABHLX19} that $\mathfrak{X}^{\rm Kss}_{V,n}$ admits the separated good moduli space $\mathfrak{X}^{\rm Kss}_{V,n} \to X^{\rm Kps}_{V,n}$.

As a consequence of Corollary \ref{c-openness} and Theorem \ref{t-moduli}, we show that K-stability (resp. K-polystability) is an open (resp. constructible) condition for $\bQ$-Gorenstein families of log Fano pairs; see Theorem \ref{t-openkstable}.

\begin{rem}
An analogue of Theorem \ref{t-constructible}
in a local setting was proved in \cite{Xu19} 
for the normalized volume function defined in \cite{Li18}. 
Corollary \ref{c-openness} and Theorem \ref{t-moduli} can also be obtained independently as a consequence of the local result via the cone construction. 
%Our proof of Theorem \ref{t-constructible} and Corollary \ref{c-openness} does not depend on \cite{Xu-19}.
\end{rem}

% Theorem \ref{t-constructible} follows from  Theorem \ref{t-deltaplace}.1 for the case when $\delta(X,\Delta)<1$, which  
\subsection{Outline of the proof}
Our strategy of proving Theorem \ref{t-constructible} is approximating the infimum 
$$\delta(X_{\overline{b}},\Delta_{\overline{b}})=\inf_{E} \frac{A_{X_{\overline{b}},\Delta_{\overline{b}}}(E)}{S(E)}~~\mbox{ \ for all divisors $E$ over $X_{\overline{b}}$}$$ by the values on lc places $E$ of bounded complements. We then deduce  constructibility
 by using a theorem on invariance of log  plurigenera established in \cite[Theorem 1.8]{HMX13}.

\medskip

More precisely,  the proof of Theorem \ref{t-constructible} relies on combining two techniques. The first one is the special degeneration theory initiated in \cite{LX14} and later developed in \cite{LX16, Fuj19, Fuj17, BX18} etc. Roughly speaking,  to compute $\delta(X,\Delta)$ for a log Fano pair $(X,\Delta)$,  instead of testing general divisorial valuations, we can focus on a special class of valuations, which are those that arise from  a special degeneration. It was known previously that it suffices to  consider such degenerations for  studying K-stability. Our new strategy, which is the second ingredient in this paper, is to use global complements to study them. 

The concept of a {\it complement} was introduced in \cite{Sho92}. Since then it has been a particularly effective  tool in birational geometry for understanding Fano varieties. In particular, a profound theorem on the existence of bounded global complements for log Fano pairs was proved by \cite{Bir16a}.  By using the techniques from \cite{Fuj17, LX16}, one can show that the valuation computing $\min\{\delta, 1\}$ can be approximated by special divisors (see \cite{BLZ19, ZZ19}). By applying Birkar's result, we deduce that all these special divisors are lc places of a bounded family of complements.
%\footnote{\YL{Indeed, this paper does not directly show special approximation to $\delta$ (proved in [BLZ19]) but rather approximation by lc places of $N$-complements as a combination of [BLZ19, ZZ19] and [Bir19]. Shall we change the phrasing? Do we want to mention [BLZ19, ZZ19] in the intro or not?}} 

The above discussion can be easily extended to a $\mathbb{Q}$-Gorenstein family of log Fano pairs $(X,\Delta)\to B$, and finally we can use \cite{HMX13} to conclude that 
$$b\mapsto \frac{A_{X_{b},\Delta_{b} }(E_{b})}{S(E_{b})}$$ 
is a constant function if the special divisor $E_{b}$ over $(X_b,\Delta_b) $ varies in a family giving fiberwise log resolutions. 

The arguments in Section 1-4 are of a global nature. In the appendix, we will develop this strategy  further using local techniques.

\subsection{Appendix}

In Appendix \ref{s-weaklyspecial}, we will use complements to further study the K-stability of a log Fano pair $(X,\Delta)$. The results proved in Appendix \ref{s-weaklyspecial} are not needed elsewhere in this paper. However, we expect it will be useful for future research.

%We will first give a characterization of all weakly special semiample test configurations (see Theorem \ref{thm:weaklyspecial}), as well as verify that minimizers of $\frac{A_{X,\Delta}(v)}{S(v)}$ computes the log canonical threshold of a bounded complement whenever $\delta(X,\Delta)\le 1$ . 

We first prove the following theorem which gives a characterization of a valuation $v$ computing $\delta(X,\Delta)$ when $\delta(X,\Delta)\le 1$.

\begin{thm}[{=Theorem \ref{t-minimizer=lcplaceN}}]\label{t-deltaplace}
Let $n$ be a positive integer and $I\subset \bQ$ a finite set. Then there exists a positive integer $N=N(n,I)$ satisfying the following:

Let $(X,\Delta)$ be an $n$-dimensional log Fano pair such that coefficients of $\Delta$ belong to $I$.
 If $\delta(X,\Delta)\le 1$, and $v$ is a valuation computing $\delta(X,\Delta)$, then $v$ is quasi-monomial and an lc place of an $N$-complement.
\end{thm}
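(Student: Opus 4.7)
My plan combines the global approximation of $\delta$ by lc places of bounded complements (sketched in the paper and built on \cite{BLZ19, ZZ19} together with Birkar's boundedness \cite{Bir16a}) with a compactness argument inside the bounded parameter space of $N_0$-complements.

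First, I would invoke the approximation machinery developed in the main body of the paper to produce an integer $N_0 = N_0(n, I)$ such that for every log Fano pair $(X,\Delta)$ with $\dim X = n$ and coefficients of $\Delta$ in $I$,
\[
\delta(X,\Delta) = \inf_E \frac{A_{X,\Delta}(E)}{S_{X,\Delta}(E)},
\]
where $E$ ranges over prime divisors that are lc places of $N_0$-complements of $(X,\Delta)$. This reduces matters to showing: if $\delta(X,\Delta) \le 1$ and the infimum is attained by $v$, then $v$ is itself quasi-monomial and an lc place of an $N$-complement, for some $N = N(n,I)$ bounded by a fixed multiple of $N_0$.

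To analyze such a minimizer $v$, I would pick a sequence of prime divisors $E_i$, each an lc place of an $N_0$-complement $D_i$ of $(X,\Delta)$, together with scalars $c_i > 0$, so that $c_i \ord_{E_i} \to v$ weakly and the corresponding $A/S$ ratios tend to $\delta(X,\Delta)$. By \cite{Bir16a} the pairs $(X, \Delta + D_i)$ lie in a log bounded family, so after passing to a subsequence and to an irreducible stratum of the parameter space there is a common log resolution $\pi \colon Y \to X$ extracting every $E_i$, and an snc divisor $\Delta_Y$ on $Y$ supporting the total transform of each $\Delta + D_i$. Each $c_i \ord_{E_i}$ lies in the dual complex of $(Y, \Delta_Y)$, which is compact, so $v$ lies there as well and is therefore quasi-monomial.

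The final step is to produce a single complement for which $v$ is an lc place. Since the $D_i$ sit in a bounded piece of the $\bQ$-linear system $|-N_0(K_X + \Delta)|_{\bQ}$, after a further subsequence they converge to an effective $\bQ$-divisor $D_\infty$ with $N_0(K_X + \Delta + D_\infty) \sim 0$; semicontinuity of log canonical thresholds gives that $(X, \Delta + D_\infty)$ is log canonical, so $D_\infty$ is again an $N_0$-complement. The lc place locus of $(X, \Delta + D_\infty)$ is a closed rational subcomplex of the dual complex of $(Y, \Delta_Y)$, and I would argue that $v$ lies in this subcomplex as the limit of the $c_i \ord_{E_i}$. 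The main obstacle will be precisely this last passage to the limit: the $c_i \ord_{E_i}$ are lc places of the \emph{different} complements $D_i$ rather than of $D_\infty$, so one must combine the bounded-family parameterization of $N_0$-complements with the convexity and closedness of the lc-place locus inside a fixed dual complex, and with the equality $A_{X,\Delta+D_i}(c_i\ord_{E_i}) = 0$, to conclude that the limit valuation $v$ satisfies $A_{X,\Delta+D_\infty}(v)=0$. Replacing $N_0$ by a fixed bounded multiple if necessary yields the desired uniform $N$.
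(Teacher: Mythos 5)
Your proposal has a genuine gap at its core, and you have in fact put your finger on it yourself without resolving it. The first problem is the very existence of your approximating sequence: Corollary \ref{c-Ncompapproxd} only says that the \emph{infimum} of $A/S$ over lc places of $N_0$-complements equals $\delta(X,\Delta)$; it does not produce divisorial valuations $c_i\ord_{E_i}$ converging to the given minimizer $v$ in $\Val_X$. That convergence is the content of Proposition \ref{p-minapprox}, which is proved only when $\delta(X,\Delta)<1$ via a nontrivial perturbation argument (Lemma \ref{l-minapproxideal}), and the boundary case $\delta=1$ — the one that matters for K-semistable pairs — is precisely where the global method breaks down and the paper switches to the cone construction. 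The second problem is the limit step: your $E_i$ are lc places of \emph{different} complements $D_i$, hence live in different dual complexes attached to different log resolutions of $(X,\Delta+D_i)$; there is no single compact $\QM(Y,E)$ containing all of them, so neither quasi-monomiality of $v$ nor the vanishing $A_{X,\Delta+D_\infty}(v)=0$ follows from compactness. Since $w\mapsto w(D_\infty)$ is only upper semicontinuous on $\Val_X$ and you control $A_{X,\Delta+D_i}(c_i\ord_{E_i})$ rather than $A_{X,\Delta+D_\infty}(c_i\ord_{E_i})$, you would need uniform control of $c_i\ord_{E_i}(D_\infty-D_i)$, which is exactly what is missing.

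The paper's proof avoids both issues by a different architecture. Quasi-monomiality is deduced locally: by \cite[Prop.~4.8]{BJ17} the minimizer $v$ is the unique valuation computing $\lct(X,\Delta;\fa_\bullet(v))$, hence quasi-monomial by the main theorem of \cite{Xu19}. For the complement, one passes to the cone $Z$ over $X$, forms the quasi-monomial valuations $v_{1/k}$ interpolating between the canonical valuation and $v$, shows $\lct(Z,\Gamma;\fa_\bullet(v_{1/k}))=A_{Z,\Gamma}(v_{1/k})$, and uses weak lc models together with Birkar's \emph{relative} boundedness of complements to produce, for each $k$, a bounded local complement at the vertex with $v_{1/k}$ an lc place; taking initial degenerations then yields a single $\Q$-complement $\Delta^+$ of $(X,\Delta)$ with $A_{X,\Delta^+}(v)=0$. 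Only at that point does one approximate $v$ by divisorial valuations $v_j$ — but now all inside the \emph{one fixed} complex $\QM(Y,(\Delta_Y^+)^{=1})$, where Theorem \ref{t-globcomp} makes each $v_j$ an lc place of an $N$-complement and Lemma \ref{lem:linearseriesQMfinite} (finiteness of the functions $\varphi_D$ for $D$ in a fixed linear system) allows a subsequence for which a single $N$-complement works, whence $A_{X,\Delta^+_*}(v)=0$ by continuity on that complex. If you want to salvage your approach, you must either restrict to $\delta<1$ and import Proposition \ref{p-minapprox}, or supply a substitute for the cone-construction step that pins the approximating valuations to a single dual complex.
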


While part of Theorem \ref{t-deltaplace}  can be proved by a global method similar to our proof of Theorem \ref{t-constructible} (see Proposition \ref{p-minapprox} and Theorem \ref{t-quasimonomialless1}), the statement in the full generality has to be established in a somewhat different way.  For this we have to invoke the cone construction and use some arguments from \cite{Xu19}. The technique of using the cone construction to study the K-stability of a log Fano pair was initiated in \cite{Li17} and played a key role in proving results in \cite{LX16, LWX18, BX18}.

We also show the following theorem which gives a characterization of weakly special test configurations. It is obtained by combining arguments in \cite{LWX18} and \cite{Xu19}, which uses the existence of bounded local complements. 

\begin{thm}[{=Theorem \ref{thm:weaklyspecial}}]\label{thm-weaklyspecial}
Let $n$ be a positive integer and $I\subset \bQ$ a finite set. Then there exists a positive integer $N=N(n,I)$ satisfying the following:

If $(X,\Delta)$ is an $n$-dimensional log Fano pair such that coefficients of $\Delta$ belong to $I$, then a finite set of $\Z$-valued divisorial valuations
$\{v_1,\cdots, v_d\}\subset\Val_X$ is a weakly special collection (see Definition \ref{d-weaklyspecial})
if and only if there exists an $N$-complement $\Delta^+$ of $(X,\Delta)$ such that each $v_i$ is an lc place of 
$(X,\Delta^+)$.
\end{thm}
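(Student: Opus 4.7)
The plan is to establish the two implications of the equivalence separately. The harder direction, producing a bounded complement out of a weakly special collection, will be reduced via the cone construction to a local boundedness-of-complements statement at a klt singularity, following the strategies of \cite{LWX18} and \cite{Xu19}.

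For the easier \emph{if} direction, suppose $\Delta^+$ is an $N$-complement of $(X,\Delta)$ whose lc places include $v_1,\ldots,v_d$. The divisorial extraction results from \cite{BCHM} furnish a proper birational model $\pi\colon Y\to X$ whose exceptional divisors are precisely $E_1,\ldots,E_d$ with $v_{E_i}=v_i$, on which the crepant pullback $(Y,\Delta_Y+\sum E_i)$ is dlt. Running a suitable $(K_Y+\Delta_Y+\sum E_i)$-MMP over $X$ then produces the model required by Definition \ref{d-weaklyspecial} and exhibits $\{v_1,\ldots,v_d\}$ as a weakly special collection.

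For the \emph{only if} direction I would pass to the affine cone $(C,\Delta_C)$ over $(X,\Delta)$ with respect to a sufficiently divisible very ample line bundle $L=-r(K_X+\Delta)$, so that the vertex $o$ is a klt singularity of dimension $n+1$ with coefficient set controlled by $I$. A weakly special collection on $X$ lifts canonically to a collection of quasi-monomial valuations $\{\tilde v_i\}$ centered at $o$, compatible with the lc structure of a common $\bQ$-complement of the germ $(C,\Delta_C;o)$. The local complements theorem of \cite{Xu19} then yields an $N_0$-complement at $o$, with $N_0=N_0(n,I)$, for which each $\tilde v_i$ is an lc place. Translating back through the cone construction recovers the desired global $N$-complement $\Delta^+$ of $(X,\Delta)$.

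The main difficulty, as in the analysis of minimizers in Theorem \ref{t-deltaplace}, will be arranging \emph{simultaneous} lc-place membership for the entire collection with a single uniform $N$. Realizing a lone quasi-monomial valuation as an lc place of a bounded complement is essentially the content of \cite{Xu19}; extending to finite collections exploits the fact that on a sufficiently high dlt model the lc places of a complement cut out a rational polytope, so that a compatible tuple $\{\tilde v_i\}$ arising from a weakly special model can be realized at once as vertices of the dual complex of one bounded complement. The implicit bound on the cardinality $d$ of the collection in terms of $n$ and $I$ then follows from boundedness of dlt models of log Fano pairs of bounded type, closing the argument.
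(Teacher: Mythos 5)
Your high-level strategy (cone construction plus boundedness of complements) matches the paper's, but the proposal has genuine gaps in both directions, and the hardest steps are exactly the ones elided. In the \emph{only if} direction, the crux is not obtaining a local $N_1$-complement at the vertex $o\in(Z,\Gamma)$ — that does follow from Birkar's relative boundedness of complements (\cite[Thm.~1.8]{Bir16a}, not \cite{Xu19}) applied to a suitable extraction $W_k\to Z$ of the divisors $E_{i,k}$. The crux is your final sentence, ``translating back through the cone construction recovers the desired global $N$-complement'': the local complement $\Gamma_k^+=\Gamma+\tfrac{1}{rN_1}\mathrm{div}(f_k)$ produced this way is \emph{not} $\bG_m$-invariant, so it does not descend to a divisor on $X$. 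The paper must first show that the canonical valuation $w_0=\ord_{X_0}$ is also an lc place of $(Z,\Gamma_k^+)$ for $k\gg1$ — this uses a squeeze $(1-\epsilon_k)w_0\le w_{i,k}\le(1+\epsilon_k)w_0$ together with the integrality of $w_0(f_k)$, which is precisely where the uniform denominator $rN_1$ is needed — and then replace $f_k$ by its initial degeneration $\bin(f_k)$, invoking \cite[Thm.~3.1]{dFEM10} to preserve log canonicity, to obtain a $\bG_m$-invariant $\bQ$-complement that is a cone over a boundary on $X$; one then applies the \emph{global} boundedness of complements once more. None of this is in your sketch, and without it the argument does not close. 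You also assert rather than prove that a weakly special collection lifts to lc places of a common $\bQ$-complement of the cone germ; establishing this requires the finitely generated filtration attached to the test configuration, the identification $I_p=\cap_i\fa_{p/k}(w_{i,k})$ via Rees valuations, and inversion of adjunction on the degeneration of the cone (the content of Proposition~\ref{p-extracting}). Finally, no bound on $d$ in terms of $(n,I)$ is needed, and ``boundedness of dlt models of log Fano pairs of bounded type'' is not an available result in the form you invoke.

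In the \emph{if} direction, your argument produces only a birational model $Y\to X$, but Definition~\ref{d-weaklyspecial} requires a weakly special \emph{semiample test configuration}, i.e.\ a degeneration over $\bA^1$ whose central fiber components realize exactly the $v_i$; an MMP over $X$ does not yield such an object. One must instead form the filtration $\cF^pR_j=\{s\in R_j\mid v_i(s)\ge p+mA_{X,\Delta}(v_i)j\ \forall i\}$, prove it is finitely generated (the paper does this by extracting the corresponding divisors over the cone $Z$ and taking a log canonical model via \cite{BCHM10}), and then take $\cX'=\Proj\bigoplus_j\bigoplus_p t^{-p}\cF^pR_j$, followed by a further extraction of lc places to recover the components contracted by the ample model. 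Both the finite generation and the $\bA^1$-degeneration structure are missing from your proposal.
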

By \cite{LX14}, to study K-(semi,poly)stability, we can concentrate on the class of weakly special test configurations.   Theorem \ref{thm-weaklyspecial} says that this class of  test  configurations comes from a somewhat  `bounded' amount of information. 
 %it is already known that there is correspondence between 

\bigskip{\bf Postscript remarks.} Since the first version of this article appeared on the arXiv, there has been works generalizing and strengthening our results. We list a few related works below.

\begin{enumerate}
    \item In \cite[Theorem 1.1]{LXZ21}, it is shown that any valuation computing $\delta(X,\Delta)<\frac{n+1}{n}$ for an $n$-dimensional log Fano pair $(X,\Delta)$ has a finitely generated associated graded ring. Theorem \ref{t-deltaplace} is a crucial step in proving this result. This result together with \cite{BHLLX20, XZ19} implies that the the K-moduli space $X_{V,n}^{\rm Kps}$ is a projective scheme.
    \item In \cite[Corollary 3.7]{LXZ21}, it is shown in the setting of Theorem \ref{t-constructible} that $B\ni b\mapsto \min\{\delta(X_{\overline{b}},\Delta_{\overline{b}}), \frac{n+1}{n}\}$ is constructible and lower semicontinuous, where $n$ is the relative dimension of $X/B$.
    \item Zhuang found a characterization of special prime divisors over a log Fano pair, that are, prime divisors induced by special test configurations in \cite[Theorem 4.12]{Xu20} as a strengthening of Theorem \ref{thm-weaklyspecial}.
\end{enumerate}

\bigskip{\bf Acknowledgement:}  We thank Davesh Maulik, Chuyu Zhou, and Ziquan Zhuang for helpful discussions. 
We also would like to thank the anonymous referees for many useful comments.  
Much of the work on this paper was completed while the
authors enjoyed the hospitality of the MSRI, which is gratefully acknowledged.

\section{Preliminaries}\label{s-prelim}

\subsection{Conventions} 
%Throughout, we work over an algebraically closed  characteristic 0 field $k$.
We will follow standard  terminologies in \cite{KM98, Kol13}.
A \emph{(normal) pair} $(X,\Delta)$ is composed of a normal variety $X$ and an effective $\Q$-divisor $\Delta$
on $X$ such that $K_X+\Delta$ is $\mathbb{Q}$-Cartier. 
See \cite[2.34]{KM98} for the definitions of \emph{klt}, \emph{plt}, and \emph{lc} pairs.

A pair $(X,\Delta)$ is \emph{log Fano} if $X$ is projective, $(X,\Delta)$ is klt, and $-K_X-\Delta$ is ample. 
A variety $X$ is {\it $\mathbb{Q}$-Fano} if $(X,0)$ is log Fano. More generally, a variety $X$ is of {\it Fano type} if it is projective and  there exists a $\Q$-divisor  $\Delta$ such that $(X,\Delta)$ is klt and $-K_X-\Delta$ is big and nef. 

For a $\mathbb{Q}$-divisor $L$, we write $|L|_{\mathbb{Q}}$ for the set effective $\bQ$-divisors which are $\bQ$-linearly equivalent to $L$.
For a subset  $I \subseteq [0,1]$, we set
$$I_{+}=\{0\}\bigcup \Big\{ j\in [0,1] \ \Big\vert \ j=\sum^l_{p=1}{i}_p \mbox{ for some }i_1,...,i_l \in I \Big\}$$ and 
$$D(I)=\Big\{\frac{m-1+a}{m}\ \Big\vert \ a\in I_+ \mbox{ and } m\in\N\big\}.$$

\subsection{Families of pairs}

\begin{defn}
A \emph{$\Q$-Gorenstein family of (normal) pairs} $f:(X,\Delta) \to B$  over a normal base is the data of a flat surjective morphism of varieties $f:X \to B$ and a
$\Q$-divisor $\Delta$ on $X$ satisfying
\begin{enumerate}
\item  $B$ is normal and $f$ has normal, connected fibers (hence, $X$ is normal as well),
\item $\Supp(\Delta)$ does not contain a fiber, and
\item $K_{X/B} +\Delta$ is $\Q$-Cartier. 
\end{enumerate}
We say $(X,\Delta)\to B$ is a \emph{$\Q$-Gorenstein family of log Fano pairs} if in addition
 $(X_b,\Delta_b)$ is log Fano for all $b\in B$. Here, $\Delta_b$ is the cycle pull-back of $\Delta$ to the fiber $X_b$. See \cite[Section 4]{Kol19} for more background.
 \end{defn}

In birational geometry, we should  usually allow the fibers to be slc pairs. However, in this note, we are only interested in families whose fibers are of Fano type. Thus, we can assume all fibers are normal.

\begin{defn}
Let  $f:(X,\Delta)\to B$ be a $\Q$-Gorenstein  family of pairs with $B$ smooth. 
A morphism $g:Y\to X$ is a \emph{fiberwise log resolution} of $(X,\Delta) \to B$ if $Y$ is smooth over $B$,  $E : = \sum_{i\in I} E_i  =\Exc(g) + \Supp( g_*^{-1}D)$ is an snc divisor, and each stratum of $E$ is smooth with irreducible fibers over $B$.
(Here, the \emph{strata} of $E$ are the irreducible components of $E_{J} = \cap_{j\in J} E_j$, 
for some subset $J\subseteq I$.)
\end{defn}

If $(X,\Delta) \to B$ is a $\Q$-Gorenstein family of pairs, then we can always find a nonempty open set $U\subset B$ and a finite \'etale map $U'\to U$ such  that $(X_{U'}, \Delta_{U'})\to U'$ admits a fiberwise log resolution.

\subsection{Valuations}
Let $X$ be a variety. A valuation on $X$ will mean a valuation $v: K(X)^\times \to \mathbb{R}$  that is trivial on $k$ and has center on $X$. 
Recall, $v$ has center on $X$ if there exists a point $\xi\in X$ such that $v\geq 0$ on $\cO_{X,\xi}$ and $>0$ on $\fm_{\xi} \subset \cO_{X,\xi}$. Since $X$ is assumed to be separated, such a point $\xi$ is unique, and we say $v$ has \emph{center} $c_X(v): = \xi$. 
 If $X$ is proper, then the valuative criterion for properness implies such a center always exists uniquely.
By convention, we set $v(0) =+\infty$.

Following  \cites{JM12,BdFFU15}, we write $\Val_{X}$ for the set of valuations on $X$ and $\Val_X^*$ for the set of non-trivial ones. We endow $\Val_{X}$ with the topology of pointwise convergence.

To any valuation $v\in \Val_{X}$ and $p\in \N$, there is an associated \emph{valuation ideal} $\fa_{p}(v)$.
For an affine open subset $U\subseteq X$, $\fa_p(v)(U) = \{ f\in \cO_X(U) \, \vert \, v(f) \geq p \}$ if $c_X(v) \in U$ and   $\fa_p(v)(U) =  \cO_X(U)$ otherwise.
 
For an ideal $\fa \subseteq \cO_X$ and $v\in \Val_X$, we set
\[
v(\fa) : = \min \{ v(f) \, \vert\, f\in \fa \cdot \cO_{X,c_X(v)} \} \in [0, +\infty].\]
 We can also make sense of $v(s)$ when $\cL$ is a line bundle and $s\in H^0(X,\cL)$. After trivializing $\cL$ at $c_X(v)$, we set $v(s)$ equal to  the value of the local function corresponding to $s$ under this trivialization; this is independent of the choice of trivialization. 
 
Similarly, if $D$ is a Cartier divisor, we set $v(D):=v(f)$, where $f$ is a local equation for $D$ at $c_X(v)$. If $D$ is only $\Q$-Cartier, we set $v(D) : = m^{-1} v(mD)$, where $m$ is a positive integer so that $mD$ is Cartier.
 
\subsubsection{Divisorial valuations}
Let $\mu:Y\to X$ be a proper birational morphism of varieties
with $Y$ normal. A prime divisor  $E\subset Y$ (called a \emph{prime divisor over} $X$) induces a valuation $\ord_E : K(X)^\times \to \Z$ given by order of vanishing along $E$. A valuation of the form $c\cdot \ord_E$, where $c\in \Q_{>0}$, is called \emph{divisorial}. We we write ${\rm DivVal}_X\subset \Val_X$ for the set of such valuations. 

\subsubsection{Quasi-monomial valuations}\label{ss-qm}
Let $\mu:Y \to X$ be a proper birational morphism with $Y$ regular.
Fix a not necessarily closed point $\eta \in Y$ and $y_1,\ldots,y_r$ a regular system of parameters for $\cO_{Y,\eta}$. 
Given $\alpha =(\alpha_1,\ldots,\alpha_r) \in \mathbb{R}_{\geq0}^r$, we define a valuation $v_\alpha$ as follows: 
For $f \in \cO_{Y,\eta}$, 
we can write $f$ in $\widehat{\cO_{Y,\eta}} \simeq k(\eta)[[y_1,\ldots,y_r]]$
as $ \sum_{\beta \in \N^r}
c_\beta y^\beta $, where $c_\beta \in k(\eta)$
%\footnote{ \HB{ add power series expansion to setup notation for lemma}}
and set
\begin{equation}\label{eq:qmval}
v_\alpha(f):=\min\{ \langle \alpha, \beta \rangle \, \vert \, c_\beta \neq 0 \}.
\end{equation}
Note that $v_\alpha$ is determined by the Newton polygon of $\sum_{\beta \in \N^r}
c_\beta y^\beta $.

A valuation of the form $v_\alpha$ is called \emph{quasi-monomial}.
If $\alpha \in  \Q_{\geq 0}^r$,
then $v_\alpha$ is a divisorial valuation. Indeed, after a sequence of smooth blowups $Y'\to Y$
that are
toroidal with respect to
the coordinates $y_1,\ldots,y_r$,
we may find a prime divisor $F\subset Y'$  and $c\in \Q_{>0}$ so that $v_\alpha = c\,\ord_F$.

Let $E= E_1+\cdots + E_d$ be a reduced snc divisor on $Y$.
Fix a subset $J\subseteq \{1,\ldots, d\}$ and an irreducible component $Z\subseteq \cap_{i \in J} E_i$. 
Write $\eta \in Y$ for the generic point of $Z$
and choose a regular system of parameters $( y_i)_{i\in J}$ at $\eta$ such that each $y_i$ locally defines $E_i$ at $\eta$.
 We write
${\rm QM}_{\eta}(Y,E)\subseteq \Val_{X}$
for the set of quasi-monomial valuations that can be described at $\eta$ with respect to $(y_i)_{i \in J}$ and note that ${\rm QM}_{\eta}(Y,E)\simeq \R_{\geq 0}^r$.
We set ${\rm QM}(Y,E):= \cup_{\eta} {\rm QM}_\eta(Y,E)$, which has the structure of a simplicial cone complex, and $\QM(Y,E)^*$ for the non-trivial valuations in $\QM(Y,E)$.

\subsubsection{Log discrepancy}
For a pair $(X,\Delta)$, we write 
$$A_{X,\Delta}\colon \Val_X^* \to \R \cup \{ +\infty \} $$ 
for  the log discrepancy function with respect to  $(X,\Delta)$ as in \cite{JM12, BdFFU15}
(see \cite{Blu18} for the case when $\Delta\neq 0 $).
The function $A_{X,\Delta}$ is homogeneous of degree $1$
and lower semicontinous.

A pair $(X,\Delta)$ is klt (resp., lc) if and only if $A_{X,\Delta}(v)>0$ (resp., $\geq0$) for all $v\in \Val_X^*$. 
If $D$ is an effective $\Q$-Cartier divisor, then $A_{X,\Delta+D}(v)= A_{X,\Delta}(v)- v(D)$ for all $v\in \Val_X$.

When $\mu:Y \to X$ is a proper birational morphism with $Y$ normal and $E\subset Y$ a prime divisor, 
\[
A_{X,\Delta}( \ord_E) =  1+ {\rm coeff}_{E}\left( K_{Y}- \mu^*(K_X+\Delta) \right)
\] and we will often write $A_{X,\Delta}(E)$ for this value.
If $\mu:Y\to X$ is a log resolution of $(X,\Delta)$
and $E:=\Exc(\mu)+ \Supp(\mu_*^{-1}\Delta)$, then
$A_{X,\Delta}$ is linear on the cones in $\QM(Y,E)$.
Additionally, if we write 
$\Delta_Y$ for the $\Q$-divisor satisfying $K_Y+\Delta_Y= \mu^*(K_X+\Delta)$, 
then
$A_{X,\Delta}=A_{Y,\Delta_Y}$.

The following result is well known.

\begin{lem}\label{l-lcplacesQM}
Keep the above notation. 
If $(X,\Delta)$ is lc, then
 \[
{\rm QM} (Y,\Delta_Y^{=1})
 =
 \{v \in \Val_{X} \, \vert \, A_{X,\Delta}(v)=0 \}
, \]
where  $\Delta_{Y}^{=1}$  
 is the sum of the prime divisors in $\Delta_Y$ with coefficient one. In particular, the set does not depend on $Y$.
% \footnote{\HB{Note sure if this statement is necessary to prove. We could remove it if people want .}}
\end{lem}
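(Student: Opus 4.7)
The plan is to verify two inclusions, with the forward direction $\QM(Y,\Delta_Y^{=1})\subseteq\{v:A_{X,\Delta}(v)=0\}$ being a direct computation and the reverse direction requiring the retraction machinery of \cite{JM12,BdFFU15} combined with the lc hypothesis.

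For the forward inclusion, I would take $v\in\QM_\eta(Y,\Delta_Y^{=1})$ for some stratum $\eta$ with components $E_{i_1},\dots,E_{i_s}$ of $\Delta_Y^{=1}$ through $\eta$, and write $v=v_\alpha$ with $\alpha_j=v(E_{i_j})$. Since $\QM_\eta(Y,\Delta_Y^{=1})$ sits inside the simplicial cone $\QM_\eta(Y,E)$ on which $A_{X,\Delta}=A_{Y,\Delta_Y}$ is linear (recalled in the text just before the lemma), I compute
\[
A_{X,\Delta}(v)=\sum_{j=1}^{s}\alpha_j\,A_{Y,\Delta_Y}(\ord_{E_{i_j}})=\sum_{j=1}^{s}\alpha_j\bigl(1-\mathrm{coeff}_{E_{i_j}}\Delta_Y\bigr)=0,
\]
because each $E_{i_j}$ has coefficient $1$ in $\Delta_Y$ by the definition of $\Delta_Y^{=1}$.

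For the reverse inclusion, I would start with $v$ satisfying $A_{X,\Delta}(v)=0$. Let $\xi=c_Y(v)$ and let $E_{i_1},\dots,E_{i_r}$ be the components of $E$ through $\xi$, with local defining equations $y_1,\dots,y_r$ completed to a regular system of parameters of $\MO_{Y,\xi}$. Set $\alpha_j:=v(E_{i_j})$ and form the Jonsson--Musta\c{t}\u{a} retract $r_Y(v):=v_\alpha\in\QM_\xi(Y,E)$. The power-series expansion of any $f\in\widehat{\MO_{Y,\xi}}$ combined with the triangle inequality for $v$ yields the valuation-order comparison $r_Y(v)\leq v$, and the fundamental inequality of \cite{JM12,BdFFU15} promotes this to $A_{X,\Delta}(r_Y(v))\leq A_{X,\Delta}(v)=0$. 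Coupling this with $A_{X,\Delta}\geq 0$ from the lc hypothesis gives $A_{X,\Delta}(r_Y(v))=0$, and the same linearity calculation, this time recast as
\[
0=\sum_{j=1}^{r}\alpha_j\bigl(1-a_{i_j}\bigr),\qquad a_{i_j}:=\mathrm{coeff}_{E_{i_j}}\Delta_Y\leq 1,
\]
is a sum of non-negative terms, each of which must vanish. Hence $\alpha_j=0$ whenever $a_{i_j}<1$, so $r_Y(v)\in\QM(Y,\Delta_Y^{=1})$.

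The main obstacle is to upgrade $r_Y(v)\in\QM(Y,\Delta_Y^{=1})$ to $v\in\QM(Y,\Delta_Y^{=1})$. The plan is to iterate the retraction across a directed system of log resolutions $Y'\to Y$: by \cite{JM12}, the retracts $r_{Y'}(v)$ converge pointwise to $v$, and the previous argument places each $r_{Y'}(v)$ into $\QM(Y',(\Delta_{Y'})^{=1})$. Next, one invokes the compatibility of dual complexes under refinement---refining a log resolution only subdivides the simplicial structure and leaves the underlying subset of $\Val_X$ unchanged---to see that each $r_{Y'}(v)$ already lies in $\QM(Y,\Delta_Y^{=1})$. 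Since $\QM(Y,\Delta_Y^{=1})$ is a closed finite union of simplicial cones in $\Val_X$, it contains the limit $v$. The final assertion that $\QM(Y,\Delta_Y^{=1})$ is independent of the choice of log resolution $Y$ is then automatic, since the set $\{v\in\Val_X:A_{X,\Delta}(v)=0\}$ is defined intrinsically.
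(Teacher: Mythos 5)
Your forward inclusion and the computation showing $A_{X,\Delta}(r_Y(v))=0$, hence $r_Y(v)\in\QM(Y,\Delta_Y^{=1})$, are correct and match the paper's linearity argument. The gap is in the final upgrade from $r_Y(v)$ to $v$. The step ``each $r_{Y'}(v)$ already lies in $\QM(Y,\Delta_Y^{=1})$'' invokes the claim that $\QM(Y',(\Delta_{Y'})^{=1})=\QM(Y,\Delta_Y^{=1})$ as subsets of $\Val_X$ for a refinement $Y'\to Y$; but this is exactly the ``does not depend on $Y$'' assertion you are trying to prove, so the argument is circular. Concretely: a prime divisor $F$ on $Y'$, exceptional over $Y$, with $A_{Y,\Delta_Y}(F)=0$ is not \emph{a priori} a monomial valuation with respect to $(Y,E)$ --- a general log resolution $Y'\to Y$ need not be toroidal for $(Y,E)$, and showing that such an $F$ nonetheless lies in $\QM(Y,\Delta_Y^{=1})$ is precisely the reverse inclusion for divisorial valuations. (A secondary point: the closedness of $\QM(Y,\Delta_Y^{=1})$ in $\Val_X$ under pointwise convergence also deserves a word, since the cones are not compact.)

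The fix makes the whole limiting argument unnecessary: use the equality case of the retraction inequality. By \cite[Prop.\ 5.1]{JM12} (this is what \cite[Prop.\ 3.2.5]{Blu18}, cited in the paper, packages), one has $A_{X,\Delta}(v)\geq A_{X,\Delta}(r_Y(v))$ \emph{with equality if and only if} $v=r_Y(v)$, i.e.\ $v\in\QM(Y,E)$. Since $(X,\Delta)$ is lc, $0=A_{X,\Delta}(v)\geq A_{X,\Delta}(r_Y(v))\geq 0$ forces equality, so $v=r_Y(v)$, and your linearity computation then places $v$ itself in $\QM(Y,\Delta_Y^{=1})$. With that substitution your proof coincides with the paper's.
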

\begin{proof}
Set $E:= \Exc(\mu)+ \Supp(\mu_*^{-1}\Delta)$
and
observe that $A_{X,\Delta}=A_{Y,\Delta_Y}$ is zero on an extremal 
ray of a cone in $\QM(Y,E)$ if and only if 
the corresponding prime divisor on $Y$ has coefficient 1 in $\Delta_Y$. 
Since $A_{X,\Delta}$ is linear on the cones in $\QM(Y,E)$,
this implies
$\QM(Y,\Delta_Y^{=1}) $
is the locus of $\QM(Y,E)$ where  $A_{X,\Delta}$ is zero. 
By \cite[Prop 3.2.5]{Blu18}, it is also the locus of $\Val_X$ where $A_{X,\Delta}$ is zero.
\end{proof}

\subsection{Invariants associated to log Fano pairs}
Let $(X,\Delta)$ be a log Fano pair and $r$ a positive integer such that $L:= -r(K_X+\Delta)$
is a Cartier divisor. The section ring of $L$ is given by
\[
R(X,L) : = R = 
\bigoplus_{m \in \N} R_m
=
\bigoplus_{m \in \N}  H^0(X, \cO_X(mL)).\]

\subsubsection{Filtrations induced by valuations and associated invariants}
For $v\in \Val_X$ and $\lambda \in \R_{>0}$, we set 
\[
\cF_v^\la R_m : = \{ s\in R_m \, \vert \, v(s) \geq \lambda \}. 
\]
If $v = \ord_E$, where $E$ is a divisor over $X$ arising on a proper birational model $\mu:Y\to X$, then 
\[
\cF_v^\la R_m \simeq H^0\left(Y, \cO_Y(m\mu^*L - \lceil \lambda E \rceil ) \right)
.\]

We consider the following invariants
\[
T(v) : = \sup_{m\in \Z_{>0}} T_{mr}(v),\quad  \text{ where } \quad 
T_{mr}(v) := \frac{1}{mr} \sup \{ \lambda \, \vert \, \cF^{\lambda m} R_m \neq 0 \} \]
and
\[
S(v) := \lim_{ m\to \infty}  S_{mr}(v),
\quad \text{ where } \quad
S_{mr}(v) := \int_{0}^ \infty 
\frac{ \dim (\cF^{\lambda m} R_m )}{mr \dim R_m } \, d\lambda
.\]
When the choice of the log Fano pair $(X,\Delta)$ is not clear from context, 
we write $T_{X,\Delta}(v)$ and $S_{X,\Delta}(v)$
for these values. 

Both invariants can be written in terms of the vanishing of $v$ along classes of anti-canonical divisors. 
Specifically, for $m$ divisible by $r$,
$$T_{m} = \max \{ v( \tfrac{1}{m}D) \, \vert \, D \in |-m(K_X+\Delta)|\}.$$
and
$$S_{m}(v)= \max \{ v( D) \, \vert \, D\in |-K_X-\Delta|_{\Q} \text{ is $m$-basis type }\}.$$
Here, following \cite[Def. 0.1]{FO18}, a $\Q$-divisor $D\in |-K_X-\Delta|_{\Q}$ is called \emph{$m$-basis type} 
if there exists a basis $\{s_1,\ldots, s_{N_m} \}$ of $H^0\left(X,\cO_{X}(-m(K_X+\Delta))\right)$
such that 
$$D =\tfrac{1}{mN_m} \left( \{s_1=0\} +\cdots + \{s_{N_m} =0\} \right).$$

The functions $S$ and $T$ are lower semicontinuous on $\Val_X$
\cite[Prop 3.13]{BJ17} and homogeneous of degree 1 \cite[\S 3.2]{BJ17}.
When $E$ is a divisor over $X$ arising on a proper birational model $\mu:Y\to X$,  then
\[
T(\ord_E)=  \sup \{ t \in \R_{>0}\,\vert \,  -\mu^*(K_X+\Delta)-t E \text{ is pseudoeffective } 
\} 
\]
and 
\[
S(\ord_E) : = \frac{1}{(-K_X-\Delta)^n}
\int_0^\infty \vol( -\mu^*(K_X+\Delta) - tE) \, dt 
\]
We will often write $T(E)$ and $S(E)$ for these values.
See \cite[Sect. 3]{BJ17} for further details.

\subsubsection{Behaviour of $S$ and $T$ on a simplicial cone}

Let $\mu:Y\to X$ be a proper birational morphism with $Y$ regular and $E:= \sum_{i=1}^d E_i $ a reduced snc divisor.

\begin{prop}\label{prop:STcontinuousQM}
The functions $S$ and $T$ are continuous on $\QM(Y,E)$. 
\end{prop}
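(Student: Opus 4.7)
The plan is to reduce to continuity on a single closed simplicial cone, and then express $S$ and $T$ explicitly as continuous functions of the weight vector using the Newton--Okounkov body.

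Since $\QM(Y,E)$ is a simplicial cone complex, with cones $\QM_\eta(Y,E)\simeq \R^r_{\geq 0}$ glued along common faces, it suffices to show that $S$ and $T$ are continuous on each closed cone $\sigma = \QM_\eta(Y,E)$. Fix such a cone, corresponding to the generic point $\eta$ of an irreducible stratum $Z\subseteq \cap_{i\in J}E_i$, with regular parameters $y_1,\dots,y_r$ at $\eta$ such that $y_i$ locally defines $E_i$. I would then choose a complete flag $Z\supset Z_{r+1}\supset \dots\supset \{z\}$ of smooth subvarieties of $Z$ through a general closed point $z\in Z$, and extend $(y_1,\dots,y_r)$ to a full regular system of parameters $(y_1,\dots,y_n)$ adapted to this flag. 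This produces a rank-$n$ Abhyankar (monomial) valuation $\nu\colon K(X)^{\times}\to \Z^n$ with respect to the lexicographic order.

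Let $\Delta:=\Delta(L,\nu)\subset \R^n$ denote the associated Newton--Okounkov body of $L=-r(K_X+\Delta)$, a convex body of volume $\tfrac{1}{n!r^n}(-K_X-\Delta)^n$. For any $\alpha=(\alpha_1,\dots,\alpha_r)\in \R^r_{\geq 0}$, set $\phi_\alpha(x_1,\dots,x_n):=\alpha_1 x_1+\cdots+\alpha_r x_r$. The key claim is that the filtration $\cF^{\bullet}_{v_\alpha}R$ is the linear filtration on $R$ induced by $\nu$ and the linear form $\phi_\alpha$. Indeed, $\nu$ refines $v_\alpha$ in the sense that for any nonzero $s\in R_m$ with formal expansion $\sum c_\beta y^\beta$ at $\eta$, one has $v_\alpha(s)=\min_{c_\beta\neq 0}\phi_\alpha(\beta)$, and every lattice point of $\nu(R_m)$ is realized by a section, so
\[
\dim_k \cF^{\lambda}_{v_\alpha}R_m = \#\bigl\{x\in \nu(R_m) : \phi_\alpha(x)\geq \lambda\bigr\}.
\]
By the concave transform theorem of Boucksom--Chen (or the Kaveh--Khovanskii machinery), passing $m\to\infty$ yields the closed formulas
\[
T(v_\alpha)=\max_{x\in \Delta}\phi_\alpha(x),\qquad S(v_\alpha)=\frac{1}{\vol_n(\Delta)}\int_{\Delta}\phi_\alpha(x)\,dx,
\]
each of which is manifestly continuous---indeed Lipschitz with constant $\sup_{x\in\Delta}\|x\|_\infty$---in $\alpha$ on all of $\R^r_{\geq 0}$.

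The main technical obstacle is verifying that the filtration $\cF^{\bullet}_{v_\alpha}R$ is really a linear $\nu$-filtration in the sense required by Boucksom--Chen, uniformly in $\alpha\in\sigma$. For $T$ this is essentially the definition, since $T(v_\alpha)$ is the supremum of $\phi_\alpha\circ\nu$ over nonzero sections of $R_m$, divided by $m$, and $\tfrac{1}{m}\nu(R_m)$ equidistributes in $\Delta$. For $S$, one must justify that the Duistermaat--Heckman measure of $\cF^{\bullet}_{v_\alpha}$ is the pushforward of normalized Lebesgue measure on $\Delta$ under $\phi_\alpha$; this is where one really uses that $v_\alpha$ is quasi-monomial with respect to the flag defining $\nu$, so that $\nu(\cF^{\lambda}_{v_\alpha}R_m)$ is the sub-semigroup $\{x\in\nu(R_m):\phi_\alpha(x)\geq \lambda\}$ rather than a strict subset. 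Once this structural statement is in place, continuity of $S$ and $T$ on $\sigma$ follows formally, and gluing over the cones of $\QM(Y,E)$ gives continuity globally.
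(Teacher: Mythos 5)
Your reduction to a single closed cone and the idea of encoding $S(v_\alpha)$ and $T(v_\alpha)$ as integrals/maxima of the linear form $\phi_\alpha$ over a single, $\alpha$-independent Okounkov body is appealing, but the central counting identity
\[
\dim_k \cF^{\lambda}_{v_\alpha}R_m = \#\bigl\{x\in \nu(R_m) : \phi_\alpha(x)\geq \lambda\bigr\}
\]
is false for a fixed flag valuation $\nu$ with lexicographic order, and this is exactly the point you flag as "the main technical obstacle" without resolving. The problem is that $\nu(s)$ records only the lex-minimal exponent in the expansion $s=\sum c_\beta y^\beta$, whereas $v_\alpha(s)=\min_{c_\beta\neq 0}\langle\alpha,\beta\rangle$ is a minimum over \emph{all} exponents; hence $\phi_\alpha(\nu(s))\geq v_\alpha(s)$ with strict inequality in general. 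Concretely, if a graded piece contains a section whose expansion is $y_1+y_2$ (and no section with expansion $y_1$ alone), and $\alpha=(1,10)$, then with the convention $(0,1)<(1,0)$ one has $\phi_\alpha(\nu(s))=10$ while $v_\alpha(s)=1$; already $T_m$ computed from $\nu(R_m)$ overestimates the true value. So $\nu(\cF^{\lambda}_{v_\alpha}R_m)$ really can be a strict subset of $\{x\in\nu(R_m):\phi_\alpha(x)\geq\lambda\}$, and the closed formulas $T(v_\alpha)=\max_\Delta\phi_\alpha$, $S(v_\alpha)=\fint_\Delta\phi_\alpha$ do not hold for an $\alpha$-independent $\Delta$. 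The standard fix is to use a monomial order refining the $\alpha$-weight order (first compare $\langle\alpha,\beta\rangle$, then break ties), which does give $\phi_\alpha\circ\nu_\alpha=v_\alpha$ on sections --- but then the valuation $\nu_\alpha$ and its Okounkov body depend on $\alpha$, and controlling how the body varies with $\alpha$ is essentially the original problem. As written, the argument therefore has a genuine gap at its key step, for both $S$ and $T$.

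For comparison, the paper avoids Okounkov bodies entirely: it first shows each finite-level approximant $S_m$ is continuous on $\QM(Y,E)$, by observing that $S_m$ is a supremum of the functions $\varphi_D$ over $m$-basis type divisors $D$ and proving (via a Newton-polygon stratification of the linear system, Lemma \ref{lem:linearseriesQMfinite}) that only finitely many distinct functions $\varphi_D$ occur; it then upgrades this to continuity of $S$ using lower semicontinuity of $S$ on $\Val_X$ together with the uniform estimate $S\leq S_m+\varepsilon A_{X,\Delta}$ from \cite{BL18}. Note that your route, if it worked, would also have to recover this uniform-in-$\alpha$ passage from $S_m$ to $S$; in the Boucksom--Chen framework that step is the equidistribution/concave-transform limit, and it too must be justified uniformly on the cone rather than pointwise in $\alpha$.
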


When $X$ is smooth, the result is a special case of \cite[Prop 5.6]{BoJ18}.
 We provide a proof that works for all log Fano pairs.
 
\begin{proof}
 We will only prove the continuity statement for $S$, since the proof for $T$ is similar.
 To proceed, we first show  that for 
  each positive integer $m$ divisible by $r$, the function $S_m$ is continuous on $\QM(Y,E)$. 
  
%Let $Y \to X$ be a proper birational morphism with $Y$  smooth and  $y_1,\ldots,y_r$ local coordinates at a point $\eta \in Y$. 
%For $\alpha \in \R_{\geq 0}^r$, let $v_\alpha \in \Val_X$ denote the quasi-monomial valuation as in Section \ref{ss-qm}.  We will only prove the continuity statement for $S$, since the proof for $T$ is similar.
%\medskip

For a $\Q$-Cartier divisor $D$ on $X$,
write $\varphi_D: \QM(Y,E) \to \R$
for the continuous function sending $v\mapsto v(D)$.
With this notation, we have
$S_m = \sup_{D } \varphi_D$, where the sup runs through all  $m$-basis type divisors. 
Since any $m$-basis type divisor $D$ lies in ${\frac{1}{mN_m} | -mN_m(K_X+\Delta)|}$, Lemma \ref{lem:linearseriesQMfinite} implies the set of functions $\{ \varphi_D\, \vert\, D \text{ is $m$-basis type} \}$ is finite. 
Therefore,
$ S_{m}: \QM(Y,E) \to \R$ is the maximum  of finitely many continuous functions and itself continuous.

We proceed to show $S$ is continuous on $\QM(Y,E)$. 
Since  $S$
is lower semicontinuous on $\Val_X$ \cite[Prop. 3.13]{BJ17}, 
 it suffices to show the upper semicontinuity.
 Pick any $t\in \R_{>0}$. 
We have to show $U: =\{ v \in \QM(Y,E) \, \vert \, S(v) < t \} $
is open. 

Pick any $w \in U$. We may choose $\varepsilon>0$ so that
 $S(w) + \varepsilon A_{X,\Delta}(w)< t$. 
 By the fact that $S_m$ converges pointwise to $S$ and \cite[Thm. 5.13]{BL18},
  which is a partial uniform convergence result, we may choose $m$ divisible by $r$ so that 
 $ S_m(w)+ \varepsilon A_{X,\Delta}(w) < t  $
 and $S \leq S_m + \varepsilon A_{X,\Delta}$ on $\QM(Y,E)$. 
 Since $S_m$ and $A_{X,\Delta}$ are continuous on  $\QM(Y,E)$, 
there exists an open neighborhood $w \in W\subset \QM(Y,E)$ so that 
 $S_m + \varepsilon A_{X,\Delta} < t$ on $W$. 
 Then $W\subseteq U$, which completes the proof.
\end{proof} 

We must prove the following lemma  used in the above proposition. For a $\Q$-Cartier divisor $D$ on $X$, write 
$\varphi_D: \QM(Y,E) \to \R$ for the function sending $v \mapsto v(D)$. 

\begin{lem}\label{lem:linearseriesQMfinite}
If $H$ is a Cartier divisor on $X$, then the set of functions $\{ \varphi_D\, \vert \, D \in | H| \}$
is finite.
\end{lem}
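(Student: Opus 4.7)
The plan is to use that $\QM(Y, E)$ is a finite union of simplicial cones $\QM_\eta(Y, E) \simeq \R_{\geq 0}^r$ indexed by generic points $\eta$ of strata of $E$, so it suffices to show that $\{\varphi_D|_{\QM_\eta(Y,E)} : D \in |H|\}$ is finite for each fixed $\eta$. Fix such an $\eta$ with local parameters $y_1, \ldots, y_r$ and trivialize $\cO_X(H)$ near $\pi(\eta)$, so that each defining section $s_D \in V := H^0(X, \cO_X(H))$ has a power-series expansion $f_D = \sum_\beta c_\beta(s_D)\, y^\beta$ in $\widehat{\cO_{Y,\eta}} \simeq k(\eta)[[y_1, \ldots, y_r]]$. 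Then for $v_\alpha \in \QM_\eta(Y, E)$,
\[
\varphi_D(v_\alpha) = \min\bigl\{\langle \alpha, \beta\rangle : \beta \in \Sigma_{s_D}\bigr\}, \qquad \Sigma_{s_D} := \{\beta : c_\beta(s_D) \neq 0\},
\]
and since $\alpha \in \R_{\geq 0}^r$, this minimum is attained on the set of minimal elements $\Sigma_{s_D}^{\min}$ (the Newton diagram). Hence $\varphi_D|_{\QM_\eta(Y,E)}$ depends only on $\Sigma_{s_D}^{\min}$, and the problem reduces to showing that only finitely many Newton diagrams occur as $D$ varies over $|H|$.

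Each $c_\beta$ is a $k$-linear map $V \to k(\eta)$, so an element of the $k(\eta)$-vector space $\Hom_k(V, k(\eta))$, of finite dimension $\dim_k V$. Next I would introduce
\[
T := \bigl\{\beta \in \N^r : c_\beta \notin \mathrm{span}_{k(\eta)}\{c_{\beta'} : \beta' < \beta\}\bigr\},
\]
and verify that $\Sigma_{s_D}^{\min} \subseteq T$ for every $D \in |H|$: if $\beta \in \Sigma_{s_D}^{\min}$, minimality forces $c_{\beta'}(s_D) = 0$ for every $\beta' < \beta$, so if $c_\beta$ were a $k(\eta)$-linear combination of these $c_{\beta'}$, evaluating at $s_D$ would give $c_\beta(s_D) = 0$, contradicting $\beta \in \Sigma_{s_D}$. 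Thus every Newton diagram in play is contained in the single set $T$.

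The core combinatorial step is then to show $T$ is finite. Along any infinite increasing chain $\beta_1 < \beta_2 < \cdots$ in $T$, the $k(\eta)$-subspaces $M_{\beta_i} := \mathrm{span}_{k(\eta)}\{c_{\beta'} : \beta' \leq \beta_i\} \subseteq \Hom_k(V, k(\eta))$ would strictly increase: by definition of $T$, $c_{\beta_i} \notin \mathrm{span}_{k(\eta)}\{c_{\beta'} : \beta' < \beta_i\} \supseteq M_{\beta_{i-1}}$, so $M_{\beta_i} \supsetneq M_{\beta_{i-1}}$. Since the ambient space is finite-dimensional over $k(\eta)$, no such infinite chain can exist. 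Combined with Dickson's lemma and the well-partial-order structure of $\N^r$ (any infinite subset contains an infinite increasing chain), this forces $T$ to be finite. There are then at most $2^{|T|}$ possible Newton diagrams $\Sigma_{s_D}^{\min}$, yielding the desired finiteness of $\{\varphi_D|_{\QM_\eta(Y,E)}\}$ and hence of $\{\varphi_D : D \in |H|\}$.

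I expect the hardest part to be executing the Noetherian/Dickson argument for $T$ cleanly while keeping the ground field $k$, the residue field $k(\eta)$, and the local trivialization of $\cO_X(H)$ coherent throughout; once this setup is in place, the rest is essentially bookkeeping.
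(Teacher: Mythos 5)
Your proof is correct, but it takes a genuinely different route from the paper's. Both arguments reduce to a single simplicial cone $\QM_\eta(Y,E)$ and expand a local equation in $\widehat{\cO_{Y,\eta}}\simeq k(\eta)[[y_1,\ldots,y_r]]$, and both observe that $\varphi_D$ on the cone is determined by the (minimal elements of the) support of that expansion. From there the paper goes geometric: it parameterizes $|H|$ by $B=\mathbb{P}(H^0(X,\cO_X(H))^*)$, expands the universal local equation with coefficients in $k(\eta)\otimes\cO(U)$, notes that the generic Newton polygon is cut out by finitely many coefficients, and stratifies $B$ by Noetherian induction into finitely many constructible pieces on which the Newton polygon (hence $\varphi_{\mathcal{H}_b}$) is constant. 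You instead argue by linear algebra on the fixed finite-dimensional space $V=H^0(X,\cO_X(H))$: each $c_\beta$ is a $k$-linear functional $V\to k(\eta)$, every minimal support $\Sigma_{s}^{\min}$ is contained in the set $T$ of exponents whose functional is not in the $k(\eta)$-span of the functionals of strictly smaller exponents, and $T$ is finite because an infinite $T$ would (via the well-partial-order structure of $\N^r$) contain an infinite chain forcing an infinite strictly increasing chain of subspaces of the $(\dim_k V)$-dimensional space $\Hom_k(V,k(\eta))$. Your steps all check out: the reduction of the minimum to $\Sigma^{\min}$ uses $\alpha\in\R_{\geq 0}^r$ correctly, the inclusion $\Sigma_{s}^{\min}\subseteq T$ follows since minimality kills $c_{\beta'}(s)$ for all $\beta'<\beta$, and the chain argument is sound. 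What each approach buys: yours is more elementary and self-contained (no universal divisor, no Noetherian induction on a parameter scheme) and gives the explicit bound $2^{|T|}$ on the number of restricted functions per cone; the paper's version produces the constructible decomposition of the linear system itself, which is more in the spirit of the constructibility statements the lemma feeds into. Both ultimately rest on the same two finiteness inputs, namely $\dim_k H^0(X,\cO_X(H))<\infty$ and Dickson's lemma.
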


\begin{proof}
It suffices to prove the statement for the restriction of $\varphi_D$ to a fixed simplicial cone in $\QM(Y,E)$. Choose any irreducible component $ Z\subseteq \cap_{i \in J} E_i$. Write $\eta\in Y$ for the generic point of $Z$, set $r: =|J|$,
and fix a regular system of parameters $(y_i)_{i \in J}$
at $\eta\in Y$ such that $y_{i}$ locally defines $E_{i}$. 

Set $B: = \mathbb{P}(H^0(X,\cO_X(H))^*)$
and write $\mathcal{H}$ for the universal divisor on $X\times  B$ parameterizing elements of $|H|$. 
To prove the lemma, we will write $B= \cup B_i$ as a finite  union of constructible subsets so that the restriction of $\varphi_{\mathcal{H}_b}$ to $\QM_{\eta}(Y,E)$ is independent of $b\in B_i$.

Choose  a nonempty affine subset $U \subseteq B$
and a function $f\in \cO_{Y,\eta}\otimes_k \cO(U)$
that defines the Cartier divisor $\mathcal{H}\vert_{Y\times B}$ in a neighborhood of $\eta \times U$.  We can write the image of $f$ in $
\widehat{ \cO_{Y,\eta}} \otimes \cO(U)$
%k(\eta)[[y_i  \vert i \in J]]\otimes \cO(U)$ 
as 
$\sum _{\beta \in \N^r} c_\beta y^\beta$, where each $c_\beta \in k(\eta) \otimes \cO(U)$
and consider the associated Newton polygon $N: = {\rm conv}\{ \beta + \R^r_{\geq 0}\, \vert \, c_\beta \neq 0 \}$. 
Note that $N$ is determined by a finite collection of non-zero coefficients $c_{\beta^{(1)}},\ldots, c_{\beta^{(m)}}$.
Hence, if we let $B_1\subset U$ denote the open set where $c_\beta^{(i)}\neq 0 $ for all $i=1,\ldots, m$, then the Newton polygon of the image of $f$ in $\widehat{\cO_{Y,\eta}}\otimes k(b)$ agrees with $N$ for all $b\in B_1$. Hence, $\varphi_{\mathcal{H}_b}$ is independent of  $b\in B_1$. Repeating this argument on the complement eventually yields such a decomposition.
\end{proof}

\subsubsection{K-stability}
The definition of K-stability was originally defined in terms of degenerations \cite{Tia97,Don02}. 
 For this paper, we use the valuative characterization of K-stability invented in \cite{Fuj19, Li17}, which suits our techniques better.

Let $E$ be a prime divisor over a log Fano pair $(X,\Delta)$
arising on a proper normal model $\mu:Y \to X$.
Following \cite{Fuj19}, we set
\[
\beta_{X,\Delta}(E): = 
A_{X,\Delta}(E)(-K_X-\Delta)^n - 
\int_0^\infty \vol(-\mu^*(K_X-\Delta)-tE) \, dt
.\]

\begin{defn-thm}\label{d-ksemi}
A log Fano pair $(X,\Delta)$ is \emph{K-semistable} (resp., \emph{K-stable})
if and only if $\beta_{X,\Delta}(E)\geq0$ (resp., $>0$)
for all divisors $E$ over $X$.
\end{defn-thm}

The equivalence of this definition with the definition in \cite{Tia97,Don02,LX14} is addressed in 
\cite{Fuj19,Li17} (and \cite{BX18} for part of the K-stable case).

\subsubsection{Thresholds}
Let $(X,\Delta)$ be a log Fano pair and $r$ a positive integer so that $r(K_X+\Delta)$ is Cartier. 
We will consider two thresholds that measure the singularities of anticanonical divisors. 

First is an invariant defined  in \cite{FO18}. 
For a positive integer $m$ divisible by $r$, we set
\[
\delta_{m} (X,\Delta):= \min  \{ \lct(X,\Delta;D) \, \vert \, D\in |-K_X-\Delta|_{\Q} \text{ is $m$-basis type} \}
.\]
The \emph{stability threshold} of $(X,\Delta)$ is defined by $\d(X,\Delta) := \limsup_{m \to \infty} \d_{mr}(X,\Delta)$. 
As shown in  \cite{BJ17},
the
  limsup in the definition of the stability threshold is in fact a limit and 
\begin{equation}\label{deltaVal}
\d(X,\Delta) 
= \inf_{E} \frac{A_{X,\Delta}(E)}{S(E)} 
= \inf_{v} \frac{A_{X,\Delta}(v)}{S(v)}
,\end{equation}
where the first infimum runs through all prime divisors $E$ over $X$ and the second through $v\in \Val_X^*$ with $A_{X,\Delta}(v)<+\infty$. Therefore, the valuative criterion in Definition-Theorem \ref{d-ksemi} implies $(X,\Delta)$ is K-semistable
if and only if $\d(X,\Delta)\geq 1$.

Next is Tian's \emph{$\alpha$-invariant} (also known as the \emph{global log canonical threshold})
defined by 
\[ 
 \alpha(X,\Delta):= \inf \{ \lct(X,\Delta;D) \mid  D\in |-K_X-\Delta|_{\Q} \}
.\]
Similar to the stability threshold, the invariant may be expressed in terms of valuations
\begin{equation}\label{alphaVal}
\alpha(X,\Delta): 
= \inf_{E} \frac{A_{X,\Delta}(E)}{T(E)} 
= \inf_{v} \frac{A_{X,\Delta}(v)}{T(v)};
\end{equation}
see \cite{Amb16,BJ17}.

We say that a valuation computes the stability threshold (resp., global log canonical threshold) if it achieves the infimum  
in \eqref{deltaVal} (resp., \eqref{alphaVal}).

\subsection{Complements}

The theory of complements was introduced by Shokurov in his work on threefold log flips \cite{Sho92}. The boundedness of complements proved in \cite{Bir16a} (also see its generalization in \cite{HLS19}) plays a key role in this paper. 

\begin{defn}[Global complements]
Let $(X,\Delta)$ be a projective lc pair. 
A \emph{$\Q$-complement} of $(X,\Delta)$ is a $\Q$-divisor $\Delta^+$
on $X$ such that $\Delta^+\geq \Delta$,  $(X,\Delta^+)$ is lc, and $K_X+\Delta^+\sim_{\Q} 0$. 
An \emph{$N$-complement} of $(X,\Delta)$ is a $\Q$-complement $\Delta^+$
satisfying 
    $N(K_X+\Delta^+) \sim 0$. 

The latter  definition differs from the terminology in \cite{Bir16a}, which is weaker.
An $N$-complement $\Delta^+$ of $(X,\Delta)$ as defined above agrees with the definition in \emph{loc. cit.} of an $N$-complement $\Delta^+$ of $(X,\Delta)$ satisfying $\Delta^+\geq \Delta$, which is sometimes called a monotonic $N$-complement in the literature.%\footnote{\YL{added monotonic}}
\end{defn}

Clearly, if $\Delta^+$ is an $N$-complement,
then
$\Delta^+ -\Delta \in |-K_X-\Delta|_{\Q}$. %\footnote{\YL{Are we assuming monotonicity?} \HB{According to Definition 2.7, all complements in our paper are monotonic. Should we point out that this is referred to as a monotonic complement in the language of [Bir19]? } \CX{I agree we could mention our terminology is slightly different with Birkar's.} }
Additionally, if $r$ is a positive integer so that  $r(K_X+\Delta)$ is Cartier, then $rN(\Delta^+ - \Delta) \in |-rN(K_X+\Delta)|$.

One crucial input in Theorem \ref{t-constructible} is the following statement, which  follows from the deep result of \cite[Theorem 1.7]{Bir16a}.
\begin{thm}[{\cite[Theorem 1.7]{Bir16a}}]\label{t-complement}
Let $n$ be a natural number and $I\subset \mathbb{Q}\cap [0,1]$ a finite set. There is a positive number $N:=N(n,I)$ depending only on $n$ and $I$ satisfying the following:

Assume $(X,\Delta)$ is an $n$-dimensional lc pair such
that $X$ is of Fano type and the coefficients of $\Delta$ belong to $D(I)$. 
If $(X,\Delta)$ admits a $\bQ$-complement, then it admits an $N$-complement.
\end{thm}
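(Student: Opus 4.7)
The plan is to follow Birkar's strategy from \cite{Bir16a}, which proceeds by a delicate induction on the dimension $n$ that weaves together boundedness of Fano varieties (BAB) and boundedness of complements. The statement is deep and the full proof occupies most of \cite{Bir16a}; below I sketch the main reductions and indicate the key obstacle.

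First, I would reduce to a convenient geometric setup. Passing to a $\mathbb{Q}$-factorial dlt modification and running a suitable MMP on $K_Y+\Delta_Y$, one arranges that $-(K_X+\Delta)$ is big and nef, so that $(X,\Delta)$ behaves like a weak log Fano of Fano type. The argument then splits according to whether some $\bQ$-complement $\Delta^+$ has a non-klt center.

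In the non-klt case, I would pick an lc center $W\subset X$ of $(X,\Delta^+)$. Kawamata's subadjunction endows $W$ with the structure of a generalized polarized pair whose boundary coefficients lie in a DCC set depending only on $n$ and $I$. Applying the inductive hypothesis in dimension $\dim W<n$, the pair on $W$ admits an $N_W$-complement with $N_W$ bounded. I would then lift this complement to $X$ using Kawamata-Viehweg vanishing: writing the relevant line bundle on $X$ as a twist of the ideal sheaf $\mathcal{I}_W$ by a nef and big $\bQ$-divisor, one gets $H^1=0$ so that $H^0(X,\cdot)\twoheadrightarrow H^0(W,\cdot)$ is surjective, and a lifted section produces the sought $N$-complement. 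The klt case is reduced to the non-klt case by creating an lc place in a controlled way: for a bounded $m$, pick a divisor in $|-m(K_X+\Delta)|_{\bQ}$ whose log canonical threshold with respect to $(X,\Delta)$ is bounded away from $0$, and pass to the pair sitting at the boundary of its lc region. Uniform control of this step hinges on Birkar's BAB theorem for $\varepsilon$-lc Fanos of fixed dimension.

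The hard part will be ensuring that the induction closes under adjunction. Restricting a log pair to an lc center produces boundary coefficients that generically fall outside the original finite set $I$, and the ``moduli part'' of the canonical bundle formula appears as an additional nef $\bQ$-divisor. Birkar's key innovation is to set up the entire induction in the enlarged category of \emph{generalized polarized pairs}, which is closed under adjunction and satisfies its own DCC/ACC phenomena for coefficients; only in this enlarged category does the inductive hypothesis on $W$ have the right form to apply. A further structural difficulty is that BAB and boundedness of complements are mutually dependent in dimension $n$ and must be proved in tandem, rather than one serving as a black box for the other. Once the correct inductive framework is in place, the lifting of sections via Kawamata-Viehweg vanishing is comparatively routine.
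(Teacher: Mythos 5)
The paper does not prove Birkar's boundedness theorem; its ``proof'' of Theorem \ref{t-complement} is only the easy reduction to the statement actually proved in \cite{Bir16a}: since $-(K_X+\Delta)\sim_{\Q}\Delta^+-\Delta$ is effective and $X$ is of Fano type, one runs a $-(K_X+\Delta)$-MMP to reach a model $X'$ on which $-(K_{X'}+h_*\Delta)$ is nef, applies \cite[Thm.~1.7]{Bir16a} there, and then descends the $N$-complement back to $X$ via \cite[(6.1)]{Bir16a}. Your first paragraph contains roughly this reduction, but everything after it is an attempt to outline the proof of Birkar's theorem itself. That outline is a reasonable thumbnail of Birkar's strategy (induction on dimension in the category of generalized pairs, adjunction to lc centres, lifting via vanishing, the entanglement with BAB), but it is not a proof: every substantive step --- the DCC/ACC statements for generalized pairs, effective birationality, the precise form of the inductive hypothesis, the actual lifting argument --- is only named. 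If the intent was to prove the statement from scratch, the gap is essentially the whole of \cite{Bir16a}; if the intent was to reduce to the cited theorem, only the first paragraph is relevant and the rest is superfluous.

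Two concrete issues in the reduction step itself. First, you say one runs ``a suitable MMP on $K_Y+\Delta_Y$'' to make $-(K_X+\Delta)$ big and nef: the MMP must be run for the divisor $-(K_X+\Delta)$ (equivalently, for $K_X+\Delta'$ with $\Delta'$ an auxiliary klt boundary making $-(K_X+\Delta)$ the relevant class), not for $K_X+\Delta$; and bigness cannot be arranged in general (e.g.\ when $K_X+\Delta\sim_{\Q}0$) --- fortunately Birkar's theorem only requires nefness. Second, and more importantly, you never address how a complement on the birational model $X'$ yields one on $X$ with the same $N$. This is not automatic: one needs that $X\dasharrow X'$ is $-(K+\Delta)$-non-positive so that the crepant pullback of the $N$-complement remains effective and lc, which is exactly the content of \cite[(6.1)]{Bir16a} that the paper invokes. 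Without that step the reduction is incomplete even granting Birkar's theorem as a black box.
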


%Here, $D(I) : = \{ 1 - r/m\, \vert \, r\in I, m \in \N \}$. \HB{Added def of $D(I)$ here. Should we use $D(I)$ throughout the paper?}

\begin{proof} Since  $-K_X-\Delta$ is not nef, we cannot directly apply \cite[Thm 1.7]{Bir16a}.
But, there is an an easy reduction step (see e.g. \cite[(6.1)]{Bir16a}). 

Since $X$ is Fano type and $-K_{X}-\Delta$
is linearly equivalent to the effective divisor $\Delta^+-\Delta$, we can run an MMP for $-K_X-\Delta$ to get a birational model $h\colon X\dasharrow X'$ such that $-K_{X'}-h_*\Delta$ is  nef. Since $(X,\Delta)$ has a $\mathbb{Q}$-complement, so does $(X',h_*\Delta)$. 
Now,  $(X', h_*\Delta)$
has an  $N$-complement by \cite[Theorem 1.7]{Bir16a}.
Therefore, \cite[(6.1)]{Bir16a} implies $(X,\Delta)$
admits an $N$-complement as well.
\end{proof}

We note that one can find more general statements on the existence of bounded complements in \cite[Thm 1.13]{HLS19}.

\section{Approximation and boundedness}\label{s-approx+bound}
The idea of approximating a valuation by a sequence of divisors coming from a special type of birational morphisms was developed in \cite{LX16,Fuj17}, modeled on the arguments in \cite{LX14}. One key observation in this paper is that we can combine the boundedness of complements with the latter approximation process. 
%More precisely, in this section, we will show that when the stability threshold is $<1$ it may be approximated by valuations that  are lc places of a bounded family of complements. 

\subsection{Approximation of thresholds and  $\Q$-complements}
We will proceed to discuss an important class of valuations over a log Fano pair
and then describe their relation to the stability  and  global log canonical thresholds.

\subsubsection{Lc places of $\Q$-complements}

\begin{defn}
Let $(X,\Delta)$ be a log Fano pair. 
We say $v\in \Val_X$ is an \emph{lc place of a $\Q$-complement} (resp., $N$-\emph{complement}) if there exists a $\Q$-complement (resp., $N$-complement) $\Delta^+$ of $(X,\Delta)$ such that 
$A_{X,\Delta^+}(v) =0$. When $v=\ord_E$ for some divisor $E$ over $X$, we simply say $E$ is an lc place of a $\Q$-complement (resp., $N$-complement).
\end{defn}
In Appendix \ref{s-weaklyspecial}, we will see that lc places of a $\Q$-complements  are closely related to weakly special test configurations with irreducible central fiber. 

\medskip

%If $\Delta^+$ is a complement of $(X,\Delta)$, the valuations 
%that are lc places of $(X,\Delta^+)$ have a simple description:
%Let$\pi:Y\to X$ be a log resolution of $(X,\Delta^+)$,
%define
%$\Delta^+_Y$ by the equation
% $K_Y+\Delta^+_Y=\pi^*(K_X+\Delta^+)$,
% and write $E= E_1 + \cdots + E_r$ for the sum of the prime divisors in $\Delta^+_Y$ with coefficient 1. 
% We claim that $\QM(Y,E)$ is the set of valuations with $A_{X,\Delta^+}=0$.
%To see this, note that
% $A_{X,\Delta}$ is
% linear on the cones in $\QM(Y,E)$ and takes the value zero on the 
% extremal rays (since they correspond to the valuations
%$\ord_{E_i}$)).
%Hence, $A_{X,\Delta}$ is zero on $\QM(Y,E)$. 
%The reverse inclusion follows from \cite[Prop. 3.2.5]{blu18}). 

We state the following elementary lemma concerning divisorial valuations that are the lc place of a $\Q$-complement.

\begin{lem}\label{l-lcplace}
Let $(X,\Delta)$ be a log Fano pair and $E$ a prime divisor over $X$.
 If $E$ is an lc place of a $\Q$-complement,
then there exists a proper birational morphism of normal varieties
$\mu:Y\to X$ satisfying:

\begin{enumerate}
\item $E$ appears as a divisor on $Y$ with $\Exc(\mu) \subseteq E$,
%such that $-E$ is $\Q$-Cartier and $\mu$-ample, (removed, since not needed for application of Lemma in Thm 3.5 -HB
\item $(Y,  \mu_*^{-1}\Delta +(1-a)E )$ is lc and admits a $\Q$-complement, where 
 $a: ={\rm coeff}_{E}(\Delta)$ if $E$ is a prime divisor on $X$ and zero otherwise, and
\item $Y$ is Fano type.
\end{enumerate}
\end{lem}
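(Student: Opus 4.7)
The strategy is to extract $E$ as the unique $\mu$-exceptional divisor via a relative MMP, and then verify the three conclusions by pulling back data from a carefully chosen complement together with a klt Fano structure on $X$.

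First, let $\Delta^+$ be a $\Q$-complement of $(X,\Delta)$ with $A_{X,\Delta^+}(E)=0$; so $\Delta^+\geq\Delta$, the pair $(X,\Delta^+)$ is lc, $K_X+\Delta^+\sim_{\Q}0$, and $E$ is an lc place of $(X,\Delta^+)$. If $E$ is already a prime divisor on $X$, then $A_{X,\Delta^+}(E)=0$ forces ${\rm coeff}_E(\Delta^+)=1$, so taking $\mu=\operatorname{id}_X$ yields $\Exc(\mu)=\emptyset\subseteq E$ and $\Delta+(1-a)E\leq\Delta^+$, while (3) is immediate from the hypothesis on $X$. Assume from now on that $E$ is exceptional over $X$.

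To produce $\mu:Y\to X$ extracting exactly $E$, take a log resolution $\pi:W\to X$ of $(X,\Delta^+)$ on which $E$ appears as a divisor, and write $K_W+\Delta^+_W=\pi^*(K_X+\Delta^+)$; the divisor $\Delta^+_W$ is effective with coefficients in $[0,1]$ and ${\rm coeff}_E(\Delta^+_W)=1$. Running a suitable relative MMP over $X$ on a klt perturbation of $(W,\Delta^+_W)$ that is negative precisely along the $\pi$-exceptional divisors other than $E$ contracts each such divisor while preserving $E$; the MMP exists and terminates by BCHM, using the Fano type hypothesis on $X$. The output is a projective birational morphism $\mu:Y\to X$ with $Y$ normal, $E\subset Y$ a prime divisor, and $\Exc(\mu)=E$, giving (1).

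For (2), let $\Delta^+_Y$ denote the log pullback of $\Delta^+$ to $Y$. Then $(Y,\Delta^+_Y)$ is lc, $K_Y+\Delta^+_Y\sim_{\Q}0$, and ${\rm coeff}_E(\Delta^+_Y)=1$; since $\mu_*^{-1}\Delta\leq\mu_*^{-1}\Delta^+$ and $a=0$, one gets $\mu_*^{-1}\Delta+(1-a)E\leq\mu_*^{-1}\Delta^++E\leq\Delta^+_Y$, so $\Delta^+_Y$ is a $\Q$-complement of the lc pair $(Y,\mu_*^{-1}\Delta+(1-a)E)$. For (3), pick a klt $\Q$-boundary $\Theta$ on $X$ with $-K_X-\Theta$ big and nef (available since $X$ is of Fano type) and set $\Theta_\varepsilon:=(1-\varepsilon)\Delta^++\varepsilon\Theta$ for small $\varepsilon>0$. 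The pair $(X,\Theta_\varepsilon)$ is klt, since $A_{X,\Theta_\varepsilon}(v)\geq\varepsilon A_{X,\Theta}(v)>0$ for every $v\in\Val_X^*$, and $-K_X-\Theta_\varepsilon\equiv\varepsilon(-K_X-\Theta)$ is big and nef. Its log pullback $\Theta_{\varepsilon,Y}$ is effective (because $A_{X,\Theta_\varepsilon}(E)=\varepsilon A_{X,\Theta}(E)\in(0,1)$ for $\varepsilon$ small and $E$ is the only $\mu$-exceptional divisor), the pair $(Y,\Theta_{\varepsilon,Y})$ is klt, and $-(K_Y+\Theta_{\varepsilon,Y})=\mu^*(-K_X-\Theta_\varepsilon)$ is big and nef, so $Y$ is of Fano type.

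The main technical obstacle is the construction of $\mu$: one must tune the perturbation so that the relative MMP simultaneously runs, terminates, and contracts precisely the $\pi$-exceptional divisors other than $E$ without touching $E$ itself. The Fano type hypothesis on $X$ is what makes this possible through BCHM. Once $\mu$ is in hand, (2) and (3) follow from routine coefficient and perturbation computations.
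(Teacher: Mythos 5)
Your proof is correct and takes essentially the same route as the paper: extract $E$ by a BCHM relative MMP so that $\Exc(\mu)\subseteq E$, deduce (2) from the crepant pullback of $\Delta^+$, and get (3) by pulling back a klt perturbation of $\Delta^+$ toward a klt Fano-type boundary (the paper uses $\Delta+cD$ with $D=\Delta^+-\Delta$ where you use $(1-\varepsilon)\Delta^+ +\varepsilon\Theta$, which is the same computation). One small caveat: in the case where $E$ is already a prime divisor on $X$ you take $\mu=\mathrm{id}$, but then $K_X+\Delta+(1-a)E$ need not be $\Q$-Cartier when $X$ is not $\Q$-factorial; the paper avoids this by applying the BCHM extraction uniformly so that $-E$ is $\Q$-Cartier and $\mu$-ample on $Y$ in all cases (a small modification when $E\subset X$).
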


%Recall, a projective variety $Y$ is \emph{Fano type} if there exists a $\Q$-divisor $B$ such that $(Y,B)$ is klt  and $-K_Y-B$ is big and nef.

\begin{proof}
Choose a $\Q$-complement $\Delta^+$ of $(X,\Delta)$ such that $A_{X,\Delta^+}(E)=0$ and set 
$$D:= \Delta^+-\Delta\sim_{\bQ}-(K_X+\Delta).$$ 
Fix $0<c<1$ so that $0< A_{X,\Delta+cD}(E)<1$.

By \cite{BCHM10}, there exists a proper birational morphism of normal varieties $\mu:Y \to X$ such that $E$ appears as a divisor on $Y$ with $\Exc(\mu) \subseteq E$ such that   $-E$  is $\Q$-Cartier and $\mu$-ample.
Write $\Gamma$ for the $\Q$-divisor on $Y$ so that $$K_{Y}+\Gamma=\mu^*(K_X+\Delta^+).$$
Since  $(Y,\Gamma)$ is lc and
$\Gamma \geq \mu_*^{-1}\Delta+ (1-a)E$, (2) holds.
Next, set $\Gamma' = \Gamma-(1-c)\mu^*(D)$. 
Note that $(Y,\Gamma')$ is  klt, since $(X,\Delta+cD)$ is klt. Additionally,
$$
-K_Y -\Gamma'\sim_{\Q}  
-(1-c) \mu^*(K_X+\Delta)
$$
is big and nef. Therefore, $Y$ is of Fano type.
\end{proof}

\subsubsection{Stability threshold}

We state the following characterization of the stability threshold. 

\begin{prop}\label{p-blz}
Let $(X,\Delta)$ be a log Fano pair. 
If $\delta(X,\Delta)\le 1$, then 
\[
\delta(X,\Delta) = \inf_{E} \frac{A_{X,\Delta}(E)}{S(E)}
\]
where the infimum runs through prime divisors $E$ over $X$ 
such that $E$ is an lc place of a $\Q$-complement.
\end{prop}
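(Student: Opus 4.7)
The inequality $\delta(X,\Delta) \leq \inf_{E} A_{X,\Delta}(E)/S(E)$, where $E$ ranges over lc places of $\Q$-complements, is immediate from \eqref{deltaVal} since we are infimizing over a subclass of prime divisors over $X$. The content is therefore the reverse inequality. My plan is to produce a sequence of prime divisors $E_m$ over $X$, each of which is an lc place of a $\Q$-complement, such that $A_{X,\Delta}(E_m)/S(E_m) \to \delta(X,\Delta)$.

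First I extract the $E_m$ from the approximating thresholds. Rewriting $\delta_m(X,\Delta) = \inf_v A_{X,\Delta}(v)/S_m(v)$ by switching the order of infima in the definition and invoking the attainment results of \cite{BJ17}, I choose, for each $m$ divisible by $r$, a prime divisor $E_m$ over $X$ realizing this infimum, together with an $m$-basis type divisor $D_m\in|{-}K_X-\Delta|_\Q$ with $v_m(D_m)=S_m(v_m)$, where $v_m=\ord_{E_m}$. Then
\[
\lct(X,\Delta;D_m)=\frac{A_{X,\Delta}(E_m)}{v_m(D_m)}=\delta_m,
\]
so $(X,\Delta+\delta_m D_m)$ is lc with $E_m$ an lc place. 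Since $\delta(X,\Delta)\leq 1$, we have $\delta_m\leq 1$ for $m$ large, which is precisely where the hypothesis $\delta\leq 1$ enters.

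Next I promote $\Delta+\delta_m D_m$ to a genuine $\Q$-complement. Choose $m'$ so large that $|{-}m'(K_X+\Delta)|$ is basepoint-free, and pick a very general $H_m$ in this linear system; set $D'_m:=\tfrac{1}{m'}H_m$ and
\[
\Delta_m^+:=\Delta+\delta_m D_m+(1-\delta_m)D'_m.
\]
Then $K_X+\Delta_m^+\sim_\Q 0$, and a Koll\'ar--Bertini argument applied to the basepoint-free system shows that, for general $H_m$, the pair $(X,\Delta_m^+)$ remains lc. Moreover, since $c_X(v_m)\subsetneq X$, a general $H_m$ does not vanish along this center, whence $v_m(D'_m)=0$. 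Combining these facts yields
\[
A_{X,\Delta_m^+}(E_m)=A_{X,\Delta}(E_m)-\delta_m v_m(D_m)-(1-\delta_m)v_m(D'_m)=0,
\]
so $E_m$ is an lc place of the $\Q$-complement $\Delta_m^+$.

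Finally, invoking the uniform convergence $S_m\to S$ on divisorial valuations of bounded log discrepancy (from \cite{BJ17,BL18}), one has $S_m(v_m)\leq (1+\varepsilon)S(v_m)$ for $m\gg 0$, hence
\[
\frac{A_{X,\Delta}(E_m)}{S(E_m)}=\delta_m\cdot\frac{S_m(v_m)}{S(v_m)}\leq (1+\varepsilon)\delta_m\longrightarrow (1+\varepsilon)\,\delta(X,\Delta),
\]
and letting $\varepsilon\to 0$ completes the proof. The delicate step is the perturbation: one must simultaneously preserve lc-ness of $(X,\Delta_m^+)$ (which uses $\delta_m\leq 1$ together with Koll\'ar--Bertini) \emph{and} ensure $E_m$ stays an lc place (which requires $v_m(D'_m)=0$, hence basepoint-freeness of the anticanonical series and positive codimension of the center of $v_m$). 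Both are standard in isolation, but must be combined with care.
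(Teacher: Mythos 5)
Your argument for the case $\delta(X,\Delta)<1$ is essentially the paper's: extract $E_m$ computing $\delta_m=\lct(X,\Delta;D_m)$ for a basis type divisor $D_m$, complete $\Delta+\delta_m D_m$ to a $\Q$-complement via Koll\'ar--Bertini, and pass to the limit using $S_m\leq(1+\varepsilon)S$. (One small simplification: you do not need $v_m(D'_m)=0$. Since $(X,\Delta_m^+)$ is lc we have $A_{X,\Delta_m^+}(E_m)\geq 0$, and it is $\leq A_{X,\Delta+\delta_m D_m}(E_m)=0$, so $E_m$ is automatically an lc place of $\Delta_m^+$.)

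The genuine gap is the sentence ``Since $\delta(X,\Delta)\leq 1$, we have $\delta_m\leq 1$ for $m$ large.'' This does not follow. What is known is only that $\delta_m\to\delta$; there is no monotonicity, and the uniform estimate from \cite{BJ17} gives $\delta_m\geq \delta/(1+\varepsilon_m)$, i.e.\ it pushes $\delta_m$ \emph{up}, not down. When $\delta(X,\Delta)=1$ it is entirely possible that $\delta_m>1$ for all $m$, and then your construction breaks: $K_X+\Delta+\delta_m D_m\sim_{\Q}(\delta_m-1)D_m\not\sim_{\Q}0$, the correction term $(1-\delta_m)D'_m$ is anti-effective, and truncating to $c_m=\min\{1,\delta_m\}=1$ gives $A_{X,\Delta+D_m}(E_m)=A(E_m)-S_m(E_m)>0$, so $E_m$ is no longer an lc place of the resulting $\Q$-complement. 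This is exactly why the paper treats $\delta=1$ separately: following \cite{ZZ19}, it uses Fujita's inequality $T_{X,\Delta}(E)\geq(1+\tfrac1n)S_{X,\Delta}(E)$ to produce $D\in|-K_X-\Delta|_{\Q}$ with $(X,\Delta+\varepsilon D)$ klt and $\delta(X,\Delta+\varepsilon D)<1$, applies the $\delta<1$ case to that perturbed pair (whose $\Q$-complements are also $\Q$-complements of $(X,\Delta)$), and then transfers the ratio back to $(X,\Delta)$ using the bound $\ord_E(D)\leq T(E)\leq A_{X,\Delta}(E)/\alpha(X,\Delta)$ before letting $\varepsilon\to0$. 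Some such reduction is needed; your proof as written only establishes the proposition when $\delta(X,\Delta)<1$.
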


The result is proved in \cite{BLZ19} in the case when $\delta(X,\Delta)<1$. %using the characterization of the stability threshold in terms of $m$-basis type divisors \cite{FO18}% and a uniform approximation result in \cite{BJ17} removed since,
Using an argument from \cite{ZZ19}, the $\delta(X,\Delta)=1$ case can be deduced from the $<1$ case. 

\begin{proof}
We first treat the case when $\delta(X,\Delta)<1$ which is embedded in the proof of \cite[Theorem 4.1]{BLZ19}. For the convenience of the reader, we recall the argument in \emph{loc. cit.}

By Equation \ref{deltaVal}, the inequality 
$\delta(X,\Delta) \leq \inf_E \tfrac{A_{X,\Delta}(E)}{S(E)}$ holds. 
%Next, set $\delta:=\delta(X,\Delta)$ and $\delta_m:=\delta_m(X,\Delta)$. 
%By \cite[Proof of Theorem 4.1]{BLZ19}, for each  positive integer $m$ sufficiently divisible there exists a prime divisor $E_m$ over $X$ and an $m$-basis type divisor $B_m\in |-K_X-\Delta|_{\bQ}$ such that each $E_m$ is an lc place of the log canonical pair $(X,\Delta+\delta_m B_m)$, $\delta_m<1$, and  $\delta(X,\Delta)=\lim_{m\to\infty}\frac{A_{X,\Delta}(E_m)}{S(E_m)}$. 
%\footnote{
%\HB{Is it worth repeating the argument from BLZ19 for the convenience of the reader, since it is short:\\
For the reverse inequality, pick any $\varepsilon>0$. By the fact that $\delta$ is a limit and \cite[Cor. 3.6]{BJ17}, we may choose $m$ so that $$\delta_m(X,\Delta)< \min \{1, (1+ \varepsilon) \delta(X,\Delta) \}$$ and $S_m(v)\leq (1+\varepsilon)S(v)$ for all $v\in \Val_X^*$ with finite log discrepancy. Now, fix an $m$-basis type divisor $B$ such that $\delta_m (X,\Delta)= \lct(X,\Delta;B)$
and a divisor $E$ over $X$ computing the lct (i.e. $\tfrac{A_{X,\Delta}(E)}{\ord_E(B)} = \lct(X,\Delta;B)$). Note that $\ord_E(B)\leq S_m(E) \leq (1+\varepsilon ) S(E)$. Therefore, 
\[
\frac{A_{X,\Delta}(E)}{S(E)}\leq (1+\varepsilon) \frac{A_{X,\Delta}(E)}{\ord_E(B)} =
(1+\varepsilon)\delta_m(X,\Delta)
\leq (1+\varepsilon)^2 \delta(X,\Delta).\]

We will show $E$ is the lc place of a $\Q$-complement.
To proceed, note that $(X,\Delta+ \delta_m B)$ is  lc and $A_{X,\Delta+ \delta_m B}(E)=0$. By \cite[Lem. 5.17.2]{KM98}, we can choose a 
divisor $H\in |-K_{X}-\Delta|_{\Q}$ so that $(X,\Delta +\delta_m B+ (1-\delta_m)H)$ remains lc.
Hence,  ${\Delta^+:= \Delta +\delta_m B+ (1-\delta_m)H}$ is a $\bQ$-complement of $(X,\Delta)$ and $A_{X,\Delta^+}(E)=0$.
Therefore, sending $\varepsilon\to 0$ shows that the reverse inequality  holds. 
% }}
%By \cite[Lem. 5.17.2]{KM98}, we can choose a  general divisor $H_m\in |-K_X-\Delta|_{\bQ}$ such that $(X,\Delta+ \delta_m B_m+(1-\delta_m)H_m)$ remains lc. Hence, $\Delta^+_m:=\Delta+ \delta_m B_m+(1-\delta_m)H_m$ is a complement of $(X,\Delta)$ with lc place along $E_m$. 
%Thus, the statement holds when $\delta(X,\Delta)<1$. 

We now assume $\delta(X,\Delta)=1$. 
\medskip

\emph{Claim}: For any $\varepsilon \in  (0, \alpha(X,\Delta))$,
there exists $D\in |-K_{X}-\Delta|_{\Q}$
such that $(X,\Delta+ \varepsilon D)$ is klt and
$\d(X,\Delta+\varepsilon D)<1$.

The claim follows immediately from \cite[Theorem 1.2]{ZZ19}. For the convenience of the reader, we recall the argument in \emph{loc. cit.}
Pick any $\varepsilon \in  (0, \alpha(X,\Delta))$, then $(X,\Delta+\varepsilon D)$ is klt for any $D\in |-K_{X}-\Delta|_{\Q}$.
Since $\d(X,\Delta)=1$, we may choose  a prime divisor $E$ over $X$ such that $\tfrac{A_{X,\Delta}(E)}{S_{X,\Delta}(E)}<1+\tfrac{\varepsilon}{3n}$, where $n:=\dim (X)$. 
Using the inequality $T_{X,\Delta}(E) \geq(1 +\tfrac{1}{n})S_{X,\Delta}(E)$
\cite[Prop. 2.1]{Fuj19}
, we may choose $D\in |-K_X-\Delta|_{\Q}$
such that $\ord_E(D) \geq (1 + \tfrac{1}{2n}) S_{X,\Delta}(E)$. 
Observe that
$$
A_{X,\Delta+\varepsilon D}(E)  = A_{X,\Delta}(E) - \varepsilon\cdot \ord_E(D)
\quad
\text{ and }
\quad
S_{X,\Delta+\varepsilon D}(E)=(1-\varepsilon)S_{X,\Delta}(E),$$ 
where the second equation is \cite[Lem. 3.7.i]{BJ17}. Therefore, 
\[
\delta(X,\Delta+\varepsilon D) 
\leq \frac{A_{X,\Delta+\varepsilon D}(E)}{
S_{X,\Delta+\varepsilon D}(E)}
=
\frac{1}{1-\varepsilon}
\left( 
\frac{A_{X,\Delta}(E)}{S_{X,\Delta}(E)}
- \varepsilon\cdot\frac{\ord_E(D)}{S_{X,\Delta}(E)}
\right) 
.\]
Since 
$\tfrac{A_{X,\Delta}(E)}{S_{X,\Delta}(E)}
- \varepsilon\cdot\tfrac{\ord_E(D)}{S_{X,\Delta}(E)}
\leq 1+\tfrac{\varepsilon}{3n} - \varepsilon \left( 1+\frac{1}{2n}\right)<1-\varepsilon$, we can conclude $\delta(X,\Delta+\varepsilon D) <1$. 
\medskip 

We now return to the proof of the proposition.
By Equation \ref{deltaVal}, the inequality 
$\delta(X,\Delta) \leq \inf_E \tfrac{A_{X,\Delta}(E)}{S(E)}$ holds. 
For the reverse inequality, fix a rational number $\varepsilon \in (0,\alpha(X,\Delta))$. 
By the above claim, there exists $D\in |-K_X-\Delta|_{\Q}$ such that $\delta(X,\Delta+\varepsilon D)<1$. 
Using the $\delta<1$ case, we may find a divisor $E$ over $X$ such that $\tfrac{A_{X,\Delta+\varepsilon D}(E)}{S_{X,\Delta+\varepsilon D}(E)}<1
$
and $E$ is the lc place of a $\mathbb Q$-complement $\Delta^+$
of $(X,\Delta+\varepsilon D)$. 
Note that $\Delta^+$ is also a $\Q$-complement of $(X,\Delta)$, since by definition $\Delta^+\ge \Delta+\varepsilon D\ge \Delta $.%\footnote{\CX{I add some simple explanation to possibly clarify the referee Y's confusion.}}

We now estimate $\tfrac{A_{X,\Delta}(E)}{S_{X,\Delta}(E)}$.
Using that $\alpha(X,\Delta) \leq
\tfrac{A_{X,\Delta}(E)}{T_{X,\Delta}(E)}$ by \eqref{alphaVal}, we see 
$\ord_E(D) \leq T_{X,\Delta}(E)\leq  \tfrac{A_{X,\Delta}(E)}{ \alpha(X,\Delta)}$. 
Therefore,
\[
1>\frac{A_{X,\Delta+\varepsilon D}(E)}{
S_{X,\Delta+\varepsilon D}(E)}
= 
\frac{A_{X,\Delta}(E)- \varepsilon \ord_E(D)}
{(1-\varepsilon)(S_{X,\Delta}(E))}
\geq 
\left( \frac{1- \varepsilon/ \alpha(X,\Delta)}{1-\varepsilon}
\right) 
\frac{A_{X,\Delta}(E)}{S_{X,\Delta}(E)}
.\]
Sending $\varepsilon \to 0$ completes the proof. 
\end{proof}

\subsubsection{Global log canonical threshold} 

We now prove an analog of Proposition \ref{p-blz} for the global log canonical threshold.
The statement follows almost immediately from  definitions.

\begin{prop}\label{p-alphaQ-comp}
Let $(X,\Delta)$ be a log Fano pair. 
If $\alpha(X,\Delta)<1$, then 
\[
\alpha(X,\Delta) = \inf_{E} \frac{A_{X,\Delta}(E)}{T(E)}
\]
where the infimum runs through divisors $E$ over $X$ 
such that $E$ is an lc place of $\Q$-complement.
\end{prop}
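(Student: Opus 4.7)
The plan is to adapt the strategy of Proposition \ref{p-blz} (the $\delta<1$ case), which is considerably simpler here because of the structure of Tian's $\alpha$-invariant. The inequality $\alpha(X,\Delta)\leq \inf_E A_{X,\Delta}(E)/T(E)$, where $E$ ranges only over lc places of $\Q$-complements, is immediate from \eqref{alphaVal} since we are taking an infimum over a smaller class. The real content is the reverse inequality.

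For the reverse, I would fix an arbitrary $\varepsilon\in (0, 1-\alpha(X,\Delta))$ and, using the definition
\[
\alpha(X,\Delta)=\inf\{\lct(X,\Delta;D)\mid D\in|-K_X-\Delta|_\Q\},
\]
choose $D\in|-K_X-\Delta|_\Q$ with $c:=\lct(X,\Delta;D)\leq \alpha(X,\Delta)+\varepsilon<1$. Pick a prime divisor $E$ over $X$ computing this log canonical threshold, so that $(X,\Delta+cD)$ is lc with $A_{X,\Delta+cD}(E)=0$, equivalently $A_{X,\Delta}(E)=c\cdot\ord_E(D)$.

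Next I would promote $(X,\Delta+cD)$ to an actual $\Q$-complement of $(X,\Delta)$ by adding a general auxiliary divisor in the anticanonical class. Exactly as in the proof of Proposition \ref{p-blz}, \cite[Lem. 5.17.2]{KM98} produces an $H\in|-K_X-\Delta|_\Q$ such that $\Delta^+:=\Delta+cD+(1-c)H$ has $(X,\Delta^+)$ lc; since $K_X+\Delta^+\sim_\Q 0$, it is a $\Q$-complement of $(X,\Delta)$. Because $(X,\Delta^+)$ is lc, we automatically get $A_{X,\Delta^+}(E)\geq 0$, i.e.\ $-(1-c)\ord_E(H)\geq 0$, which forces $\ord_E(H)=0$ and hence $A_{X,\Delta^+}(E)=0$. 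Thus $E$ is an lc place of the $\Q$-complement $\Delta^+$.

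Finally I would bound the ratio. Since $D\in|-K_X-\Delta|_\Q$, the pseudoeffective characterization of $T$ gives $\ord_E(D)\leq T(E)$, so
\[
\frac{A_{X,\Delta}(E)}{T(E)}=\frac{c\cdot\ord_E(D)}{T(E)}\leq c\leq \alpha(X,\Delta)+\varepsilon.
\]
Letting $\varepsilon\to 0$ yields the reverse inequality. The only mildly delicate step is the perturbation argument, but it is taken care of verbatim by the cited lemma from \cite{KM98}; the rest is bookkeeping. Unlike the $\delta=1$ case of Proposition \ref{p-blz}, no additional perturbation via $\alpha$ is needed here because the hypothesis $\alpha(X,\Delta)<1$ already guarantees a divisor $D$ with $\lct<1$, which is exactly what makes $(1-c)$ positive and the perturbation possible.
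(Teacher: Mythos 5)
Your proof is correct and follows essentially the same route as the paper: the same choice of $D$ with $\lct(X,\Delta;D)=c\leq\alpha(X,\Delta)+\varepsilon<1$, the same divisor $E$ computing the lct, the same perturbation $\Delta^+=\Delta+cD+(1-c)H$ via \cite[Lem.\ 5.17.2]{KM98}, and the same final bound $A_{X,\Delta}(E)/T(E)\leq c$ using $\ord_E(D)\leq T(E)$. Your extra observation that lc-ness of $(X,\Delta^+)$ forces $\ord_E(H)=0$ is a correct (and slightly more explicit) justification of the step the paper states without comment.
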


By applying a deeper result \cite[Thm. 1.5]{Bir16b}, it follows that the above infimum is a minimum.
Though, Proposition \ref{p-alphaQ-comp}
will be sufficient for proving that $\min\{1,\alpha\}$ is constructible.

\begin{proof}
By Equation \ref{alphaVal}, $\alpha(X,\Delta)\leq \inf_{E} \frac{A_{X,\Delta}(E)}{T(E)}$ where the infimum runs through lc places of $\Q$-complements. 
For the reverse inequality, pick any $\varepsilon \in (0, 1-\alpha(X,\Delta))$.
We may choose $D \in |-K_{X}-\Delta|_{\Q}$ such that 
$$c: = \lct(X,\Delta;D) \leq \alpha(X,\Delta)+\varepsilon,$$
and a divisor $E$ over $X$ computing $\lct(X,\Delta;D)$. Note that $c<1$ by our choices of $\varepsilon$.

Observe that $(X,\Delta+cD)$ is lc 
and $A_{X,\Delta+cD}(E) =A_{X,\Delta}(E)- c \cdot \ord_E(D)=0$.
Since $-K_X-\Delta$ is ample, we may find $H \in |-K_X-\Delta|_{\Q}$
so that $(X,\Delta+cD+(1-c)H)$ remains lc \cite[Lem. 5.17.2]{KM98}.
Hence, $\Delta^+:= \Delta+cD+(1-c)H$ is a $\Q$-complement of $(X,\Delta)$ 
with $A_{X,\Delta^+}(E)=0$.  

Now, observe that
$\frac{A_{X,\Delta}(E)}{T(E)}
\leq \lct(X,\Delta;D)
 \leq \alpha(X,\Delta)+\varepsilon,$
 since $\ord_E(D)\leq  T(E)$. Therefore, sending $\varepsilon\to 0$ completes the proof. 
\end{proof}

\subsection{Boundedness} 
Using the boundedness of complements, we will show that lc places of $\Q$-complements are in fact lc places 
of $N$-complements.

\begin{thm}\label{t-globcomp}
Let $n$ be a natural number and $I\subseteq \Q$ a finite set.
There is a positive integer $N:= N(n,I)$
satisfying the following: 

Assume $(X,\Delta)$ is an $n$-dimensional log Fano pair such that the coefficients of $\Delta$ belong to $D(I)$.
If $E$ is a divisor over $X$ 
that is the lc place of a $\Q$-complement, then $E$ is an lc place of an $N$-complement.
\end{thm}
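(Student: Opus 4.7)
The plan is to reduce Theorem \ref{t-globcomp} to Birkar's boundedness of complements (Theorem \ref{t-complement}) by first extracting $E$ as a divisor on a suitable Fano-type model and then pushing the resulting bounded complement back down to $X$.

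\textbf{Step 1 (extraction).} Apply Lemma \ref{l-lcplace} to $E$. This produces a proper birational morphism $\mu\colon Y \to X$ such that $E$ is a prime divisor on $Y$ with $\Exc(\mu)\subseteq E$, the variety $Y$ is of Fano type, and the pair
\[
(Y,\Gamma') \quad \text{with} \quad \Gamma' := \mu_*^{-1}\Delta + (1-a)E,
\]
is lc and admits a $\Q$-complement. Here $a := \mathrm{coeff}_E(\Delta)$ if $E\subset X$ is a prime divisor and $a := 0$ otherwise.

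\textbf{Step 2 (apply Birkar).} The coefficients of $\Gamma'$ lie in the finite set
\[
I' := I \cup \{1-a : a \in I\} \cup \{0,1\} \subseteq \Q \cap [0,1],
\]
which depends only on $I$. Since $I' \subseteq D(I')$, the pair $(Y,\Gamma')$ satisfies the hypotheses of Theorem \ref{t-complement} for the datum $(n, I')$. Hence there is an integer $N = N(n,I')$, depending only on $n$ and $I$, and an $N$-complement $\Gamma'^+$ of $(Y,\Gamma')$. Because $\Gamma'^+ \geq \Gamma'$ and the coefficient of $E$ in $\Gamma'$ equals $1$, while $(Y,\Gamma'^+)$ is lc, the coefficient of $E$ in $\Gamma'^+$ is exactly $1$; equivalently, $A_{Y,\Gamma'^+}(E) = 0$.

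\textbf{Step 3 (push down).} Set $\Delta^+ := \mu_*\Gamma'^+$. I claim $\Delta^+$ is an $N$-complement of $(X,\Delta)$ with $A_{X,\Delta^+}(E)=0$. Both $K_Y + \Gamma'^+$ (by the $N$-complement condition) and $\mu^*(K_X+\Delta^+)$ (after we know $N(K_X+\Delta^+)\sim 0$) are $\Q$-linearly trivial on $Y$ and have the same pushforward to $X$. Their difference is therefore a $\mu$-exceptional, $\Q$-linearly trivial divisor, so by the Negativity Lemma it vanishes. This yields the identification
\[
\mu^*(K_X + \Delta^+) = K_Y + \Gamma'^+,
\]
from which $A_{X,\Delta^+}(E) = A_{Y,\Gamma'^+}(E) = 0$ and $(X,\Delta^+)$ is lc. The linear equivalence $N(K_X+\Delta^+) \sim 0$ and the inequality $\Delta^+ \geq \Delta$ follow by pushing forward the analogous facts on $Y$ and checking coefficients at non-exceptional primes.

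\textbf{Main obstacle.} The main technical point will be checking the integral floor inequality $N\Delta^+ \geq N\lfloor\Delta\rfloor + \lfloor (N+1)\{\Delta\}\rfloor$ at every prime of $X$. Away from $E$ the coefficients of $\Gamma'$ and $\Delta$ agree and the required inequality follows directly from the corresponding inequality for $\Gamma'^+$. The delicate case is when $E$ is a non-exceptional prime divisor on $X$: there the coefficient of $E$ in $\Gamma'$ has been boosted from $a$ to $1$, so one must observe that $N\Delta^+_E = N\Gamma'^+_E \geq N$, while the right-hand side equals $N\lfloor a \rfloor + \lfloor(N+1)\{a\}\rfloor \leq N$, so the inequality persists. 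A parallel but slightly subtler accounting gives the rest.
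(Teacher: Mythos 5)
Your proposal is correct and follows essentially the same route as the paper: extract $E$ via Lemma \ref{l-lcplace}, apply Theorem \ref{t-complement} on the Fano-type model $Y$, and push the bounded complement down to $X$ using the crepancy identity $\mu^*(K_X+\Delta^+)=K_Y+\Gamma'^+$. The only difference is that you carry out the coefficient bookkeeping (the finite set $I'$ and the floor inequality $N\Delta^+ \geq N\lfloor\Delta\rfloor + \lfloor(N+1)\{\Delta\}\rfloor$) explicitly, which the paper's proof leaves implicit; your accounting there is correct.
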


The statement is a consequence of the boundedness of complements in \cite{Bir16a}. 

\begin{proof}
Let $E$ be a divisor over a log Fano pair $(X,\Delta)$ such that 
$E$ is an lc place of a $\Q$-complement.
Applying Lemma \ref{l-lcplace} gives  a proper birational morphism $\mu:Y\to X$ satisfying conditions (1)-(3) of the lemma. 

Since the latter conditions are satisfied, we may apply Theorem \ref{t-complement} to find an integer $N:=N(n,I)$, depending only on $n$ and $I$, so that $(Y,  \mu_*^{-1}\Delta+(1-a)E  )$ admits a $N$-complement $\Gamma_Y$.

Hence, $N(K_Y+\Gamma_Y)\sim 0$,
$(Y,\Gamma_Y)$ is lc, and $\Gamma_Y$ has coefficient 1 along $E$.
If we set $\Gamma:= \mu_*(\Gamma_Y)$,  then 
\[
K_Y + \Gamma_Y = \mu^*(K_X+\Gamma). 
\]
holds.
This implies 
$\Gamma$ is a $N$-complement of $(X,\Delta)$ and $A_{X,\Gamma}(E)=0$. 
\end{proof}

 The next two statements follow immediately
from combining  
Theorem \ref{t-globcomp} with
Propositions \ref{p-blz}
and 
 \ref{p-alphaQ-comp}.

\begin{cor}\label{c-Ncompapproxd}
Let $n$ be a natural number and $I\subseteq \Q$ a finite set.
There is a positive integer $N:= N(n,I)$
satisfying the following: 

If $(X,\Delta)$ is an $n$-dimensional log Fano pair
such that the coefficients of $\Delta$ belong to $D(I)$ and $\delta(X,\Delta)\le 1$, then  
\[
   \delta(X,\Delta) = \inf_{E} \frac{A_{X,\Delta}(E)}{S(E)}
,    \]
 where  the infimum runs through divisors over $X$ that are lc places of an $N$-complement.
 \end{cor}

\begin{cor}\label{c-Ncompapproxa} 
 Let $n$ be a natural number and $I\subseteq \Q$ a finite set.
There is a positive integer $N:= N(n,I)$
satisfying the following: 

If $(X,\Delta)$ is an $n$-dimensional log Fano pair
such that the coefficients of $\Delta$ belong to $D(I)$ and $\alpha(X,\Delta)<1$, then  
\[
   \alpha(X,\Delta) = \inf_{E} \frac{A_{X,\Delta}(E)}{T(E)}
,    \]
 where  the infimum runs through divisors over $X$ that are lc places of an $N$-complement.
\end{cor}
 
 \subsection{Approximating valuations computing the stability threshold}

In this section, we show that if a valuation  computes $\delta<1$, then it is a limit of  divisorial valuations that are lc places of bounded complements. The result will not be used in the proof of Theorem \ref{t-constructible}.
%$\lim_{k\to \infty} \frac{A_{X,\Delta}(v_k)}{S(v_k)} = \delta(X,\Delta)$. In Appendix \ref{ss-compute}

We will obtain stronger results via passing to the cone to also cover the case when $\delta(X,\Delta)=1$. Nevertheless, Proposition \ref{p-minapprox} is  proved only using global arguments. 

\begin{prop}\label{p-minapprox}
Let $n$ be a natural number and $I\subseteq \Q$ a finite set.
There is a positive integer $N:= N(n,I)$
satisfying the following: 

Assume $(X,\Delta)$ is an $n$-dimensional log Fano pair such that the coefficients of $\Delta$ belong to $D(I)$ and  $\delta(X,\Delta)<1$.
If $v^{*}\in \Val_X^{=1}$ computes $\d(X,\Delta)$, then there exists a sequence of divisorial valuations $(v_k)_k$
in $\Val_X^{=1}$ converging  to $v^*$ 
such that each $v_k$ is the lc place of a $N$-complement
and 
$\lim_{k \to \infty}\frac{A_{X,\Delta}(v_k)}{S(v_k)} = \delta(X,\Delta)$.
%\footnote{ 
%\HB{Added $\lim_{k \to \infty}\frac{A_{X,\Delta}(v_k)}{S(v_k)} = \delta(X,\Delta)$ as Chenyang suggested in the paragraph before the proof. I mistakenly thought this didn't follow from the proof. But the implication is trivial and I was confusing inequalities.}  }
\end{prop}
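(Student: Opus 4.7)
The plan is to combine the approximation of minimizers of $\delta<1$ by lc places of $\Q$-complements (established in \cite{BLZ19} via the techniques of \cite{LX14, LX16, Fuj17}) with the boundedness of complements in Theorem \ref{t-globcomp} and the continuity of $S$ on quasi-monomial cones in Proposition \ref{prop:STcontinuousQM}.

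First, I would invoke \cite{BLZ19} to conclude that, since $v^*$ computes $\delta(X,\Delta)<1$, the valuation $v^*$ is quasi-monomial and is an lc place of some $\Q$-complement $\Delta^+$ of $(X,\Delta)$. Choosing a log resolution $\mu\colon Y \to X$ of $(X,\Delta^+)$ and writing $K_Y + \Delta_Y^+ = \mu^*(K_X+\Delta^+)$, Lemma \ref{l-lcplacesQM} places $v^*$ inside $\QM(Y, (\Delta_Y^+)^{=1})$. Fix a stratum of $(\Delta_Y^+)^{=1}$ with generic point $\eta$ such that $v^* \in \QM_\eta(Y, (\Delta_Y^+)^{=1})$, and write $v^* = v_\alpha$ in the corresponding simplicial cone $\QM_\eta(Y, (\Delta_Y^+)^{=1}) \cong \R^r_{\geq 0}$ for some $\alpha \in \R^r_{\geq 0}$.

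Next, I would pick any sequence $\alpha^{(k)} \in \Q^r_{\geq 0} \setminus \{0\}$ with $\alpha^{(k)} \to \alpha$; by the discussion in Section 2.3.2, the associated $w_k := v_{\alpha^{(k)}}$ are divisorial valuations. Since $A_{X,\Delta}$ is linear on the simplicial cone and $A_{X,\Delta}(v^*)=1$, the rescalings $v_k := w_k / A_{X,\Delta}(w_k)$ are well-defined for $k\gg 0$, lie in $\Val_X^{=1}$, and converge to $v^*$ in the topology of pointwise convergence. Via the identification $\QM(Y, (\Delta_Y^+)^{=1}) = \{v : A_{X, \Delta^+}(v) = 0\}$ from Lemma \ref{l-lcplacesQM}, each $w_k$ is an lc place of the $\Q$-complement $\Delta^+$, and Theorem \ref{t-globcomp} then upgrades this to each $v_k$ being an lc place of an $N$-complement for a uniform $N=N(n,I)$. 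Continuity of $S$ on $\QM(Y, (\Delta_Y^+)^{=1})$ (Proposition \ref{prop:STcontinuousQM}), together with $A_{X,\Delta}(v_k)=A_{X,\Delta}(v^*)=1$, then yields the required convergence $A_{X,\Delta}(v_k)/S(v_k) \to A_{X,\Delta}(v^*)/S(v^*) = \delta(X,\Delta)$.

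The main obstacle is the very first step: extracting from the hypothesis that $v^*$ is both quasi-monomial and an lc place of some $\Q$-complement. This input is provided by \cite{BLZ19}, building on the special degeneration machinery of \cite{LX14, LX16, Fuj17}; once this quasi-monomial structure is in hand, the subsequent rational approximation inside the simplicial cone and the transfer from $\Q$-complement to $N$-complement via Theorem \ref{t-globcomp} are essentially formal.
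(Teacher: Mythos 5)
The decisive step of your argument --- that the given minimizer $v^*$ is itself an lc place of some $\Q$-complement --- is exactly the point that cannot be obtained from \cite{BLZ19}, and this is a genuine gap. What \cite{BLZ19} provides (and what this paper extracts from it as Proposition \ref{p-blz}) is an approximation statement about the infimum: $\delta(X,\Delta)=\inf_E A_{X,\Delta}(E)/S(E)$ over divisorial lc places of $\Q$-complements, together with the \emph{existence} of some quasi-monomial minimizer with good properties. It does not say that an \emph{arbitrary} valuation $v^*$ computing $\delta$ is an lc place of a $\Q$-complement; since uniqueness of minimizers is not known here, the existence statement cannot be transferred to your given $v^*$. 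Indeed, the assertion you want as your "first step" is essentially Theorem \ref{t-minimizer=lcplaceN} of the appendix, which the paper proves only by invoking the cone construction and the local results of \cite{Xu19}, and which it explicitly presents as strictly harder than Proposition \ref{p-minapprox} ("Proposition \ref{p-minapprox} is proved only using global arguments", while "the statement in the full generality has to be established in a somewhat different way"). Granting that input would make your remaining steps (rational approximation in $\QM(Y,(\Delta_Y^+)^{=1})$, Theorem \ref{t-globcomp}, continuity of $S$ from Proposition \ref{prop:STcontinuousQM}) essentially correct, but it would also make Proposition \ref{p-minapprox} an immediate corollary of a theorem whose proof does not yet exist at this point in the paper.

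For contrast, the paper's proof never shows $v^*$ is an lc place of anything. Instead it proves Lemma \ref{l-minapproxideal}: for any ideal $\fb$ and $\varepsilon>0$ one can produce a divisorial valuation $w\in\Val_X^{=1}$ that is an lc place of a $\Q$-complement and satisfies $w(\fb)\geq(1-\varepsilon)v^*(\fb)$. This is done by perturbing the boundary to $\Delta'=\Delta+\beta D$ with $D$ chosen so that $v(D)$ approximates $t\,v(\fb)$ uniformly in $v$, checking $\delta(X,\Delta')<1$, and applying Proposition \ref{p-blz} to the perturbed pair. Applying this with $\fb=\fa_k(v^*)$ produces the sequence $(v_k)$, and the convergence $v_k\to v^*$ is then established by a compactness argument in the valuation space combined with the rigidity statement $S(w^*)\geq S(v^*)$ with equality iff $w^*=v^*$ \cite[Prop.~3.15]{BJ17} and the minimizing property of $v^*$. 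If you want a global proof of the proposition, some version of this perturbation-and-compactness mechanism is needed in place of your first step.
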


Here, $\Val_{X}^{=1}$ denotes the set  $\{v\in \Val_X\, \vert \, A_{X,\Delta}(v)=1\}$.
Before proving the proposition, we need the following lemma, which may be viewed as a global analogue of \cite[Lemma 3.8]{LX16}.

\begin{lem}\label{l-minapproxideal}
Let $(X,\Delta)$ be a log Fano pair with  $\delta(X,\Delta)<1$.
Assume $v^{*}\in \Val_X^{=1}$ computes $\d(X,\Delta)$. 
For any ideal $\fb$ on $X$  and $\varepsilon >0$, 
there exists a divisorial valuation  $w \in {\rm Val}_X^{=1}$ such that 
\begin{enumerate}
    \item $w$ is the lc place of a $\Q$-complement and
     \item $w(\fb) \geq v^{*}(\fb)( 1 - \varepsilon)$.
\end{enumerate}
\end{lem}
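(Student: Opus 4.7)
The plan is to adapt the proof of Proposition \ref{p-blz} by inserting an ideal-theoretic perturbation that forces the approximating divisorial valuation to have large order along $\mathfrak{b}$. Writing $\delta := \delta(X,\Delta)$ and $\gamma := v^*(\mathfrak{b})$ (the case $\gamma=0$ is trivial, so assume $\gamma>0$), I would introduce for small $c>0$ a $\mathfrak{b}$-twisted $m$-stability threshold
\[
\delta_m(X,\Delta;\mathfrak{b}^c) := \inf\bigl\{\lct(X,\Delta+c\cdot\mathfrak{b};D) : D \text{ is $m$-basis type}\bigr\}.
\]
Testing against $v^*$ (using an $m$-basis divisor compatible with $\mathcal{F}^\bullet_{v^*}$) yields $\delta_m(X,\Delta;\mathfrak{b}^c) \leq (1-c\gamma)/S_m(v^*)$, and combining $S_m(v^*) \to S(v^*) = 1/\delta$ with the uniform estimate from \cite[Thm. 5.13]{BL18} allows me to fix $m \gg 0$ and an auxiliary $\varepsilon' > 0$ so that both $\delta_m(X,\Delta;\mathfrak{b}^c) \leq \delta(1-c\gamma)(1+\varepsilon')$ and $S_m(v) \leq (1+\varepsilon')S(v)$ hold on valuations of finite log discrepancy, with $\varepsilon' \ll c\gamma\varepsilon$.

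Next, I take $D_m$ realizing the twisted minimum together with a prime divisor $F_m$ over $X$ computing $a := \lct(X,\Delta+c\mathfrak{b};D_m)$, so that $A_{X,\Delta}(F_m) - c\ord_{F_m}(\mathfrak{b}) = a\ord_{F_m}(D_m)$. Normalizing $w_m := F_m/A_{X,\Delta}(F_m) \in \Val_X^{=1}$ and using $S(w_m) \leq 1/\delta$ (a direct consequence of $\delta \leq A/S$), the estimate $a\ord_{F_m}(D_m) \leq \delta_m(X,\Delta;\mathfrak{b}^c)(1+\varepsilon')S(F_m)$ unravels to $1 - c\cdot w_m(\mathfrak{b}) \leq (1-c\gamma)(1+\varepsilon')^2$, yielding $w_m(\mathfrak{b}) \geq \gamma(1-\varepsilon)$ for the chosen $\varepsilon'$. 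This gives condition (2).

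The remaining task is to show $F_m$ is the lc place of a genuine $\mathbb{Q}$-complement of $(X,\Delta)$ (not of the pair with formal ideal perturbation). The plan: for $k \gg 0$ the sheaf $\mathfrak{b}^k\otimes\mathcal{O}_X(-k(K_X+\Delta))$ is globally generated, so I pick a generic $H_k$ in that linear system; it satisfies $\ord_v(\tfrac{1}{k}H_k) \geq \ord_v(\mathfrak{b})$ for every $v$, with equality at the specified $F_m$ by genericity. Replacing $c\mathfrak{b}$ by $\tfrac{c}{k}H_k$ then preserves log canonicity globally and the vanishing of log discrepancy at $F_m$. To upgrade to a full $\mathbb{Q}$-complement, I pad with a generic $\tfrac{1-a-c}{k'}P$ for $P \in |{-k'(K_X+\Delta)}|$ chosen to avoid the center of $F_m$, which is possible by base-point freeness for $k' \gg 0$ (so that $\ord_{F_m}(P)=0$), while \cite[Lem. 5.17.2]{KM98} ensures the final pair remains lc. The hard part is this last step: the ideal-to-divisor replacement must simultaneously preserve global log canonicity through the inequalities $\ord_v(\tfrac{1}{k}H_k) \geq \ord_v(\mathfrak{b})$ \emph{and} the \emph{exact} vanishing at $F_m$, while the padding divisor must be positioned to leave $F_m$ an lc place—both resolved via general-position arguments built on the global generation of $\mathfrak{b}^k\otimes \mathcal{O}_X(-k(K_X+\Delta))$.
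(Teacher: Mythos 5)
Your argument is correct and uses the same toolkit as the paper (the uniform estimate $S_m\leq(1+\varepsilon')S$ from \cite{BL18}, an lct-computing divisor of a minimizing basis-type divisor, and Bertini padding via \cite[Lem.~5.17.2]{KM98}), but the steps are permuted in a way worth comparing. The paper converts the ideal to a divisor \emph{first}: it proves a Claim producing $D\in|-K_X-\Delta|_{\Q}$ with the two-sided bound $tw(\fb)\le w(D)\le tw(\fb)+\varepsilon' A_{X,\Delta}(w)$ valid for \emph{all} $w\in\Val_X$ (by taking a general member of the base-point-free system $|m(-\mu^*(K_X+\Delta)-tE)|$ on a log resolution), sets $\Delta'=\Delta+\beta D$, and then invokes Proposition \ref{p-blz} for $(X,\Delta')$ as a black box; the sandwich inequalities on $\tfrac{A_{X,\Delta'}}{S_{X,\Delta'}}$ then force $w(\fb)\ge v^*(\fb)(1-\varepsilon)$. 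You instead carry the formal ideal $c\cdot\fb$ through a re-run of the Proposition \ref{p-blz} argument and only convert it to a divisor at the end, locally at $F_m$. This buys nothing over the paper's route and costs you the extra general-position work you flag at the end; the paper's up-front uniform conversion makes that step unnecessary.

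Two bookkeeping points you should make explicit. First, your padding coefficient $1-a-c$ must be nonnegative: since $a\to(1-c\gamma)\delta$ as $m\to\infty$, $\varepsilon'\to0$, you need $c$ small relative to $1-\delta(X,\Delta)$ (this is exactly where the hypothesis $\delta<1$ enters; compare the paper's choice $\beta=\min\{1-\delta,c/2\}$). Second, global generation is only guaranteed for $\fb^k\otimes\cO_X(-kk_0(K_X+\Delta))$ for some fixed $k_0$ (take $\fb\otimes\cO_X(-k_0(K_X+\Delta))$ globally generated and use powers), not for matching exponents $k$; the replacing divisor $\tfrac{c}{k}H_k$ then has degree $ck_0$ rather than $c$ against $-(K_X+\Delta)$, which is harmless after shrinking $c$ but must be folded into the constraint $a+ck_0\le1$. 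With these adjustments your proof goes through.
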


\begin{proof}
Let $\mu:Y \to X$ be a log resolution of $(X,\Delta,\fb )$. 
Write $E$ for the Cartier divisor on $Y$ such that 
$\fb \cdot \cO_Y = \cO_Y(-E)$
and $\Delta_Y$ for the $\Q$-divisor on $Y$ such that
$$K_Y+ \Delta_Y = \mu^*(K_X+\Delta).$$ 
%Since $(X,\Delta)$ is klt, there exists  $a>0$ so that $\Delta_Y \leq (1-a) \Delta_Y^{\red}$. 
Since $-K_X-\Delta$ is ample and $-E$ $\mu$-semiample,
there exists a rational number $0<t\ll 1$ so that $-\mu^*(K_X+\Delta)-tE$ is semiample.

\medskip

\noindent{\emph{Claim}}:
For any $\varepsilon'>0$, there exists
$D \in |-K_X-\Delta|_{\Q}$ such that 
\begin{equation}\label{e-w(D)}
tw( \fb)  \leq w(D) \leq tw(\fb) + \varepsilon' A_{X,\Delta}(w)
\end{equation}
for all $w\in \Val_X$ and $\lct(X,\Delta;D) >\lct(X,\Delta;\fb)/2$.
\medskip

Fix $m\in \Z_{>0}$ sufficiently divisible so that 
$$|m\left(-\mu^*(K_X+\Delta)-t E\right)|$$
is base point free.
By Bertini's Theorem, we may choose a divisor $H$ in the above linear system
so that  $\Supp(\Delta_Y)+H$ is snc.
Set $D : =\mu_*(m^{-1}H+tE)$, which is an element of $|-K_X-\Delta|_{\Q}$. We will show $D$ satisfies the claim if $m\gg0$. 

Note that $\mu^*D = m^{-1}H +tE$. Hence, for $w \in \Val_X$, 
\[
w(D) = m^{-1}w(H) + tw(E) = m^{-1}w(H) +tw(\fb).
\]
Since $H +\Supp(\Delta_Y)$ is snc, $\lct(Y, \Delta_Y;H)=1$. Therefore, $A_{X,\Delta}(w) = A_{Y,\Delta_Y}(w) \geq   w(H)$, which implies \eqref{e-w(D)} holds if $m\geq 1/\varepsilon' $. 

To finish the claim, we compute 
\begin{align*}
\lct(X,\Delta;D)^{-1} &=  \lct\left(Y,\Delta_Y; m^{-1}H +tE\right) ^{-1}\\
& \leq   \lct\left(Y,\Delta_Y; m^{-1}H\right) ^{-1} + \lct\left(Y, \Delta_Y; tE\right)^{-1}\\
&  \leq  1/m +  t \cdot \lct\left(X,\Delta; \fb \right)^{-1},
\end{align*}
where the first inequality follows from the log concavity of the log canonical threshold (e.g. see \cite[Lemma 1.7.iv]{JM12}). 
Since $t<1$, we conclude $\lct(X,\Delta;D) >  \lct(X,\Delta; \fb)/2$ when $m\gg0$. 

\medskip
Returning to the proof of the lemma,
set $\delta:=\d(X,\Delta)$, 
$c: = \lct(X,\Delta; \fb)$,
and  $\beta := \min\{1-\delta, c/2\}$. 
Choose a divisor $D$ satisfying the above claim with $\varepsilon': = \varepsilon t v^* (\fb) /2$. 
If we set $\Delta': = \Delta+\beta D$,
then $(X,\Delta')$ is log Fano. 
Indeed, the pair is klt, since $\lct(X,\Delta;D)>c/2 \geq \beta$. Additionally,
$-K_X-\Delta'\sim_{\Q} -(1-\beta)(K_X+\Delta)$ is ample. 

Observe that for $w\in \Val_{X}$, 
$$A_{X,\Delta'}(w)= A_{X,\Delta}(w)- \beta w(D)
\quad \text{ and }  S_{X,\Delta'}(w) = (1-\beta)S_{X,\Delta}(w),$$
where the  second equality is by \cite[Lem. 3.7.i]{BJ17}.
Therefore,  \eqref{e-w(D)} gives
\begin{equation}\label{e-logdineq}
A_{X,\Delta}(w)(1-\beta \varepsilon')- \beta t w(\fb)  
\leq 
A_{X,\Delta'}(w)
\leq
A_{X,\Delta}(w)- \beta t w(\fb).
\end{equation}
If we set $p:=v^{*}(\fb)$, we see
\[
\d' := \delta(X,\Delta') 
\leq 
\frac{A_{X,\Delta'}(v^{*}) }
{S_{X,\Delta'}(v^{*})}
\leq 
\frac{ A_{X,\Delta}(v^{*}) - \beta t p}{(1-\beta)S_{X,\Delta}(v^{*})}
= 
\frac{ (1 - \beta tp)\delta }
{1-\beta} 
.\]
Since $1-\beta \geq \d$, it follows that $\d'<1$.

Applying Proposition \ref{p-blz} to $(X,\Delta')$, 
we may find a divisorial valuation $w\in{\rm Val}_{X}^{=1}$ that is an lc place of a $\Q$-complement of $(X,\Delta')$
so that 
\begin{equation}\label{e-epsbeta1}
\frac{A_{X,\Delta'}(w)} 
{S_{X,\Delta'}(w)} \leq 
\frac{ (1 - \beta tp+\beta \varepsilon ') \delta }
{1-\beta} 
.\end{equation}
Since $\Delta'\geq \Delta$, $w$ is also 
 an lc place of a $\Q$-complement of $(X,\Delta)$.
By \eqref{e-logdineq} and the inequality $\d \leq \frac{A_{X,\Delta}(w)}{S_{X,\Delta}(w)} = \frac{1}{ S_{X,\Delta}(w)}$, we know 
\begin{equation}\label{e-epsbeta2}
\frac{  \left(1-\beta \varepsilon ' -  \beta tw(\fb) \right) \delta}{1-\beta }
\leq 
\frac{A_{X,\Delta'}(w)}
{S_{X,\Delta'}(w)}
.
\end{equation}
Analyzing \eqref{e-epsbeta1} and \eqref{e-epsbeta2}, we see $ 
 p \leq  w(\fb) +\frac{2 \varepsilon '}{ t}$. Since $\frac{2\varepsilon'}{ t} = \varepsilon v^*(\fb)$, the proof is complete. 
 \end{proof}

\begin{proof}[Proof of Proposition \ref{p-minapprox}]
By Lemma \ref{l-minapproxideal}, there exists a sequence of divisorial valuations $(v_k)_{k}$ 
in $\Val_X^{=1}$ that are lc places of $\Q$-complements satisfying 
$$\frac{v_k(\fa_k(v^{*}))}{k} \geq 1 - \frac{1}{k}\quad \text{  and } \quad A_{X,\Delta}(v_k) =1.$$  
By Theorem \ref{t-globcomp}, each $v_k$ is also the lc place of a $N$-complement.
We will show 
that a subsequence converges to $v^{*}$ in the valuation space. 

Let $\xi \in X$ denote the center of $v^{*}$.
Write $\mathfrak{m}_{\xi} \subset \cO_X$ 
for the ideal of functions vanishing along $\xi$
and set $r: =v^{*}(\mathfrak{m}_{\xi})$, which is $>0$.
Since $v^{*}(\mathfrak{m}_{\xi}^{ \lceil k/r \rceil }) = \lceil k/r\rceil r \geq k$, 
$
\mathfrak{m}_\xi^{ \lceil  k/r\rceil } \subseteq \fa_k(v^{*})$.
Therefore, 
\[
\lceil  k/ r\rceil v_k( \mathfrak{m}_{\xi}) 
\geq v_k (\fa_k(v^{*} )) 
\geq k-1
\]
for all $k$.
This implies there exists $\varepsilon>0$ so that $v_k(\fm_{\xi}) \geq \varepsilon $ for all $k>1$. 
Since 
\[
c: = \lct( X,\Delta; \mathfrak{m}_{\xi}) = \inf_{w\in \Val_X^*} \frac{A_{X,\Delta}(w)}{w(\mathfrak{m}_{\xi})},\]
we also have 
$v_k(\fm_{\xi}) \leq A_{X,\Delta}(v_k) /c= 1/c$ for all $k$.

Observe that 
\[
V: = \{ v\in \Val_{X}, \vert \, 
 v(\fm_{\xi}) \in [\varepsilon, 1/c] \text{ and } A_{X,\Delta}(v)
\leq  1 \} \subseteq \Val_X
\]
is a compact subset of the valuation space  by \cite[Thm 3.1]{BdFFU15}. Since any compact subset of the valuation space is also sequentially compact \cite{Poi13} (see also the proof of \cite[Prop 3.9]{LX16}),
there exists a subsequence 
$(v_{k_j})_j$
 so that the limit $w^* = \lim_{j \to \infty}v_{k_j}$ exists in $\Val_{X}$.
We will proceed to show $w^*=v^{*}$.

By the lower semicontinuity of the log discrepancy function,
$A_{X,\Delta}(w^*) \leq 1$.
For any positive integers $k$, there is an inclusion $\fa_{m}(v^{*})^{\lceil  k/m \rceil} \subseteq \fa_{k}(v^{*})$. 
This implies
$ 
\lceil k/m\rceil
v_{k}(\fa_m(v^{*})) 
\geq
  v_{k}(\fa_{k}(v^{*}))
\geq
k-1
$. Therefore, 
\[
w^* (\fa_m(v^{*})) = 
\lim_{j \to \infty} v_{k_j}(\fa_m(v^{*}))
\geq 
\lim_{j \to \infty}
\frac{k_j-1}{\lceil k_j/m \rceil}
=
m.\]
From the latter inequality, we see  $w^*(\fa_\bullet(v^{*})) \geq1$.
Hence, $w^* \geq v^{*}$ holds by \cite[Lem. 2.4]{JM12}.
Therefore,   $S(w^*) \geq S(v^{*})$
and equality holds if and only if $w^* = v^{*}$ by  \cite[Prop. 3.15]{BJ17}.

Since $v^{*}$ computes the stability threshold,  $\frac{A_{X,\Delta}(v^{*})}{S(v^{*})} \leq \frac{A_{X,\Delta}(w^*)}{S(w^*)}$. 
Using that $A_{X,\Delta}(w^*)\leq 1 = A_{X,\Delta}(v^{*})$, 
we conclude $S(w^*) =  S(v^{*})$. 
Hence, $w^* = v^{*}$ and $v^* = \lim_{j \to \infty} v_{k_j}$.

Since $S$ is lower semicontinuous on the valuation space $\liminf_{j\to \infty} S(v_{k_j}) \geq  S(v^*)$. Hence, $\limsup_{j\to \infty} \frac{A_{X,\Delta}(v_{k_j})}{S(v_{k_j}) } \leq \frac{A_{X,\Delta}(v^*)}{S(v^*)} = \delta(X,\Delta)$. By  \eqref{deltaVal}, the limit exists and equals $\delta(X,\Delta)$.
%\footnote{ \HB{Added this paragraph}}
\end{proof}

\begin{comment} % \delta=1

\begin{prop}\label{p-minapprox2}
Let $(X,\Delta)$ be a log Fano pair with  $\delta(X,\Delta)\le 1$.
If $v^{ *}\in \Val_X^{=1}$ computes $\d(X,\Delta)$, then there exists a sequence of valuations $(v_k)_{k \in \N}$ in ${\rm DivVal}_X^{=1}$ such that $\lim_{k \to \infty}{v_k} = v $
and each $v_k$ is an lc place of a $\Q$-complement.
\end{prop}
\begin{proof}We may assume $\delta(X,\Delta)=0$. Consider $v$ which computes $\delta(X,\Delta)$. Then for any $\epsilon>0$, we can find a sufficiently large $m_0$, such that for any $m_0 \vert m$, we can find a divisor $m$-basis type divisor $M$, and a divisor $D\sim_{\bQ}-K_X-\Delta$, such that the log discrepancy of $v$ with respect to $(X,\Delta+(1-\epsilon)M+\epsilon D)$ is at most 0. Replace $\epsilon$ by $\epsilon$
\end{proof}
\end{comment}

\section{Constructibility}

\subsection{Invariance of volumes}
To prove Theorem \ref{t-constructible}, 
we will need a constructibility result 
for the functions $S$ and $T$ when the valuation varies in a family.

\medskip
Consider the following setup:
Let $(X,\Delta)\to B$ be a $\Q$-Gorenstein family of log Fano pairs with $B$ smooth.
Let
 $D$ be an effective $\Q$-divisor such that 
$D\sim_{B,\Q} -K_{X/B}-\Delta$,
$\Supp(D)$ does not contain a fiber, 
and  $(X_b,\Delta_b+D_b)$ is lc for all $b\in B$. 

\begin{prop}\label{p-invarianceST}
If $(X,\Delta+D)\to B$ admits a fiberwise log
resolution $g:Y\to X$
and $F$ is a toroidal divisor with respect to $\Exc(g)+\Supp(g_*^{-1} \Delta)$ satisfying
$A_{X,\Delta+ D}(F)<1$, then 
\[
 S_{X_b,\Delta_b}(F_b)
\quad 
\text{ and }
\quad
T_{X_b,\Delta_b}(F_b)\]
 are independent of $b\in B$.
\end{prop}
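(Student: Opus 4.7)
My plan is to prove the stronger statement that the volume function $V_b(t) := \vol\bigl(-g_b^*(K_{X_b}+\Delta_b) - tF_b\bigr)$ is independent of $b\in B$ for every $t\in\Q_{\geq 0}$. This suffices: $T(F_b) = \sup\{t : V_b(t) > 0\}$, while $S(F_b) = \frac{1}{(-K_{X_b}-\Delta_b)^n}\int_0^\infty V_b(t)\,dt$ with $(-K_{X_b}-\Delta_b)^n$ constant in $b$ by flatness of intersection numbers, and $V_b(t)$ is continuous in $t$, so values on the dense rational subset determine it. After a further toroidal blowup of $Y$ along strata of $E+g_*^{-1}\Delta$---which preserves the fiberwise log resolution property, since those strata are smooth with irreducible fibers over $B$---I may assume $F$ is a prime component of the snc boundary on $Y$.

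Fix rational $t\geq 0$ and take $m\in\Z_{>0}$ divisible enough that $m(K_{X/B}+\Delta)$ and $mtF$ are Cartier. Then
$$V_b(t) = \lim_{m\to\infty}\frac{n!}{m^n}\,h^0\bigl(Y_b,\,-mg_b^*(K_{X_b}+\Delta_b)-mtF_b\bigr),$$
so it is enough to show each $h^0$ is locally constant in $b$. My strategy is to realise these sections as plurigenera of a log-smooth klt pair $(Y,\Theta)\to B$ and invoke the invariance of log plurigenera \cite[Thm.~1.8]{HMX13}. Define $\Gamma$ on $Y$ by $K_Y+\Gamma = g^*(K_X+\Delta+D)$; by the lc hypothesis on each $(X_b,\Delta_b+D_b)$, this is an effective snc $\Q$-divisor with coefficients in $[0,1]$ on every fiber, and its coefficient along $F$ equals $1 - A_{X,\Delta+D}(F) < 1$ by assumption. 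Using $K_X+\Delta+D\sim_{B,\Q}0$ (whence $K_Y+\Gamma\sim_{B,\Q}0$) and $-K_X-\Delta\sim_{B,\Q}D$, a direct computation shows that, modulo a line bundle pulled back from $B$, the divisor $-g^*(K_X+\Delta)-tF$ is linearly equivalent to $K_Y + \Theta$ for the sub-boundary $\Theta := \Gamma + tF - g^*(D)$.

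\textbf{Main obstacle.} The sub-boundary $\Theta$ has negative coefficients on components of $g^*(D)$ and coefficient $1 + t - A_{X,\Delta}(F)$ along $F$, which may exceed $1$ once $t > A_{X,\Delta}(F)$. Hence $(Y,\Theta)$ need not be fiberwise klt effective, and \cite[Thm.~1.8]{HMX13} does not apply directly. The central technical task is to replace $\Theta$ by a genuinely klt log-smooth effective pair without altering the relevant $h^0$. I expect to achieve this either by running a $(K_Y + \Gamma + tF)$-MMP relatively over $B$---reaching a model on which the relevant divisor becomes semi-ample fiberwise---or by approximation: substitute $D$ with $(1-\epsilon)D + \epsilon D'$ for a general $D'\in|-K_{X/B}-\Delta|_{\Q}$ so that $(X,\Delta+(1-\epsilon)D+\epsilon D')$ becomes klt (and the condition $A(F)<1$ persists, being an open condition in $\epsilon$), establish the invariance for the perturbed family, and then let $\epsilon\to 0$ using continuity of volumes in $\epsilon$.
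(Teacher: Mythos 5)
Your reduction---showing that $\vol\bigl(-g_b^*(K_{X_b}+\Delta_b)-tF_b\bigr)$ is independent of $b$ for each rational $t$, realizing this class as $K_{Y_b}+(\text{boundary})$ via $K_Y+\Gamma=g^*(K_X+\Delta+D)$, and invoking \cite[Thm.~1.8]{HMX13}---is exactly the paper's skeleton. (Minor slip: $K_Y+\Gamma+tF-g^*D=g^*(K_X+\Delta)+tF$, so the sub-boundary matching $-g^*(K_X+\Delta)-tF$ modulo $\sim_{B,\Q}$ is $\Gamma+g^*D-tF$, not $\Gamma+tF-g^*D$; but the coefficient problem you flag is real with either sign.) The genuine gap is that the ``main obstacle'' you isolate is the entire content of the proof, and neither of your proposed fixes resolves it. Perturbing $D$ only pushes the $F$-coefficient of $\Gamma$ strictly below $1$; it does nothing about the $\pm tF$ term, which forces the $F$-coefficient out of $[0,1)$ as soon as $t$ is large---and large $t$ is unavoidable, since one must integrate up to $T(F_b)$. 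Running a relative MMP first is also not viable: \cite[Thm.~1.8]{HMX13} is a statement about log smooth families, the MMP output is no longer log smooth over $B$, and comparing the relative MMP with the fiberwise MMPs is precisely what that theorem encapsulates, so invoking it after an MMP is circular.

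The missing idea is a scaling trick. Set $d:=1-A_{X,\Delta+D}(F)\in(0,1)$ and write $\Gamma=\Gamma_1-\Gamma_2$ with $\Gamma_1,\Gamma_2\geq0$ and no common components. Instead of $-g^*(K_X+\Delta)-tF$ itself, consider its multiple $(d/t)\bigl(-g^*(K_X+\Delta)-tF\bigr)$, whose $F$-coefficient is $-d$ \emph{independently of $t$}. Adding $K_Y+\Gamma\sim_{B,\Q}0$ and the pullback of a general $H\sim_{B,\Q}-(d/t)(K_{X/B}+\Delta)$ chosen by Bertini, one obtains
\[
K_Y+\Gamma_1+g^*H-dF\;\sim_{B,\Q}\;(d/t)\Bigl(-g^*(K_{X/B}+\Delta)-tF+(t/d)\Gamma_2\Bigr),
\]
where $\Gamma_1+g^*H-dF$ is now effective, relatively snc over $B$, with all coefficients in $[0,1)$ (the $F$-coefficient of $\Gamma_1$ is exactly $d$, which the $-dF$ cancels). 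Theorem 1.8 of \cite{HMX13} applies verbatim to this family; since $\Gamma_2$ is effective, $g$-exceptional, and does not contain $F$, adding $(t/d)(\Gamma_2)_b$ does not change fiberwise volumes, and homogeneity of $\vol$ gives the claim. Without this normalization of the $F$-coefficient, your argument stalls exactly where you say it does.
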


The \emph{toroidal} condition  in the above theorem  means  $F$ is an exceptional divisor of a sequence of toroidal blowups of $Y$ with respect to $\Exc(g)+\Supp(g_*^{-1} \Delta)$. This is equivalent to the condition that $\ord_F \in \QM(Y, \Exc(g)+\Supp(g_*^{-1} \Delta))$.

The result is a consequence of the deformation invariance of log plurigenera in smooth families \cite[Thm. 1.8]{HMX13}, whose proof is based on comparing the relative MMP over $B$ and the MMP for individual fibers.

\begin{proof}
By shrinking $B$,
we may assume $B$ is affine. 
By repeatedly blowing up the center of $F$ on $Y$,
we may assume $F$ is a prime divisor on $Y$. We fix $t\in \Q_{>0}$ 
and aim to show
\begin{equation}\label{e-volind}
\vol(-g_b^*(K_{X_b}+\Delta_b)-t F_b) 
\end{equation}
is independent of $b\in B$.

Let $\Gamma_1$ and $\Gamma_2$ be the effective $\Q$-divisors without common components in their support such that
$$
K_Y+ \Gamma_1= g^*(K_X+\Delta +D) +\Gamma_2
$$
and $g_* \Gamma_1  = \Delta+D$.
Note that $\Supp(\Gamma_1+\Gamma_2)$ is relative snc over $B$
and $d: = {\rm coeff}_{F}(\Gamma_1) = 1-A_{X,\Delta+D}(F)>0$. By inversion of adjunction we know that $(X,\Delta+D)$ is log canonical. In particular, $\Gamma_1$ has coefficients in $[0,1]$.

Since $-K_{X/B}-\Delta$ is $f$-ample, 
we may use Bertini's Theorem to find an effective $\Q$-divisor 
$H\sim_{B,\Q}-(d/t)(K_{X/B}+\Delta)$
such that $\Gamma_1 + g^*H - dF$ has coefficients in the interval $[0,1]$
and $\Supp(\Gamma_1+ g^*H- dF)$ is relative snc over $B$ after possibly shrinking $B$. 
Applying \cite[Thm. 1.8 (3)]{HMX13} gives that
\begin{equation}\label{e-volinv}
\vol\left(K_{Y_b}+(\Gamma_{1})_b+g_b^*H_b- dF_b\right)
\end{equation}
is independent of $b\in B$.
Observe  that
\begin{align*}
K_{Y}+ \Gamma_1 + g^*H- dF
&\sim_{B,\Q}
g^*(K_{X/B}+\Delta+D+H) -dF+\Gamma_2 \\
&\sim_{B,\Q} 
 -(d/t) g^*(K_{X/B}+\Delta)- d F +\Gamma_2 
\end{align*}
and, hence,
\[
K_{Y_b}+ (\Gamma_1)_b + g_b^*H_b- dF_b
\sim_{\Q}
(d/t)\left(-g_b^*(K_{X_b}+\Delta_b)-tF_b +(t/d)(\Gamma_2)_b \right).\]
For any sufficiently divisible $m\in \Z_{>0}$, every effective Cartier divisor $G\in |m(-g_b^*(K_{X_b}+\Delta_b)-tF_b+(t/d)(\Gamma_2)_b)|$ satisfies that $G+mtF_b \in |m(-g_b^*(K_{X_b}+\Delta_b)+(t/d)(\Gamma_2)_b)|$. 
Since $(\Gamma_2)_b$ is $g_b$-exceptional, we know that 
\[
|m(-g_b^*(K_{X_b}+\Delta_b)+(t/d)(\Gamma_2)_b)|=g_b^*|m(-K_{X_b}-\Delta_b)| + (mt/d)(\Gamma_2)_b.
\]
Hence $G+mtF_b\geq (mt/d)(\Gamma_2)_b$. 
Since $F_b \not\subset \Supp(\Gamma_2)$, we know that $G-(mt/d)(\Gamma_2)_b$ is an effective Cartier divisor in $|m(-g_b^*(K_{X_b}+\Delta_b)-tF_b)|$. Thus we have
\begin{align*}
    \vol\left(-g_b^*(K_{X_b}+\Delta_b)-tF_b\right)
    &= 
    \vol\left(-g_b^*(K_{X_b}+\Delta_b)-tF_b+(t/d)(\Gamma_2)_b\right)\\
&=
(t/d)^n\vol\left(K_{Y_b}+\Gamma_b +g_b^*H_b-dF_b\right).
\end{align*}
Hence, \eqref{e-volind} is independent of $b\in B$. Since this holds for each $t\in \Q_{>0}$,  $S_{X_b,\Delta_b}(F_b)$ and $T_{X_b,\Delta_b}(F_b)$ are also independent of $b\in B$. 
\end{proof}

For each closed point $b\in B$, we consider the infimum of $\frac{A}{S}$ and $\frac{A}{T}$ over the lc centers of $(X_b,\Delta_b+D_b)$. Specifically, we set
\begin{equation}\label{eq:a_b,d_b}
a_b:= \inf_{v}
   \frac{A_{X_b,\Delta_b}(v)}{T_{X_b,\Delta_b}(v)}
\quad 
\text{ and }
\quad
d_b:= \inf_{
v 
}
   \frac{A_{X_b,\Delta_b}(v)}{S_{X_b,\Delta_b}(v)}
,\end{equation}
where the infima run through $v\in \Val_{X_b}^*$ with  $ A_{X_b,\Delta_b+D_b}(v)=0$.
Using Proposition \ref{p-invarianceST}, we can understand how these values vary in families.

\begin{prop}\label{p-ind-ad}
If $(X,\Delta+D)\to B$ admits a fiberwise log
resolution, then $a_b$ and $d_b$ are independent  of the closed point $b\in B$.
Furthermore, each infimum is a minimum and achieved by a quasi-monomial valuation.
\end{prop}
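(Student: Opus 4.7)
The plan is to identify the set of valuations $v$ with $A_{X_b,\Delta_b+D_b}(v)=0$ with a single simplicial cone complex independent of $b$, and then show that $A/S$ and $A/T$ restrict to continuous, $b$-independent, positive functions on this complex.

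Let $\Gamma$ be the $\Q$-divisor on $Y$ satisfying $K_Y+\Gamma=g^*(K_{X/B}+\Delta+D)$, and let $\Gamma^{=1}$ denote the sum of its prime components with coefficient one. By Lemma \ref{l-lcplacesQM} applied to $(X_b,\Delta_b+D_b)$,
\[
\{v\in\Val_{X_b}^*\,:\,A_{X_b,\Delta_b+D_b}(v)=0\}=\QM(Y_b,\Gamma_b^{=1})^*.
\]
Since $g$ is a fiberwise log resolution of $(X,\Delta+D)$, each stratum of $\Gamma^{=1}$ is smooth with irreducible fibers over $B$. This yields a canonical bijection between the strata of $\Gamma_b^{=1}$ and those of $\Gamma^{=1}$, and hence a homeomorphism of simplicial cone complexes
$\Sigma_b:=\QM(Y_b,\Gamma_b^{=1})\cong\QM(Y,\Gamma^{=1})=:\Sigma$
that is natural in $b$.

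I would then check that, under this identification, the three functions $A_{X_b,\Delta_b}$, $S_{X_b,\Delta_b}$, and $T_{X_b,\Delta_b}$ on $\Sigma$ are all independent of $b$. The log discrepancy case is immediate: $A_{X_b,\Delta_b}$ is linear on each cone and its value on the extremal ray corresponding to a prime $E_i\subseteq\Gamma^{=1}$ equals $1-\mathrm{coeff}_{E_i}(\Gamma)+\mathrm{coeff}_{E_i}(g^*D)$, a quantity defined on $Y$. For $S$ and $T$, note that every rational point of $\Sigma$ corresponds to a divisorial valuation of the form $c\cdot\ord_{F_b}$ where $F$ is obtained by a toroidal sequence of blowups of $Y$ with respect to the snc divisor of the fiberwise log resolution (which contains $\Gamma^{=1}$). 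Since $F\in\QM(Y,\Gamma^{=1})$ satisfies $A_{X,\Delta+D}(F)=0<1$, Proposition \ref{p-invarianceST} gives that $S_{X_b,\Delta_b}(F_b)$ and $T_{X_b,\Delta_b}(F_b)$ are $b$-independent. Combined with the continuity of $S$ and $T$ on QM cones (Proposition \ref{prop:STcontinuousQM}) and the density of rational points, the invariance extends to all of $\Sigma$.

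Finally, since $(X_b,\Delta_b)$ is klt, both $A/S$ and $A/T$ are positive, continuous, and degree-zero homogeneous on $\Sigma\setminus\{0\}$. Truncating each of the finitely many maximal cones of $\Sigma$ by a linear cross-section yields a compact simplicial complex on which $A/S$ and $A/T$ attain their minima at quasi-monomial valuations; together with the $b$-invariance just established, this shows $a_b$ and $d_b$ are constant in $b$ and achieved by quasi-monomial valuations. I expect the main technical point to be the passage from toroidal divisorial valuations (where Proposition \ref{p-invarianceST} applies directly) to arbitrary QM valuations via Proposition \ref{prop:STcontinuousQM}, together with verifying that the snc divisor implicit in the fiberwise log resolution of $(X,\Delta+D)$ contains $\Gamma^{=1}$, so that the toroidal blowups produced by rational points of $\Sigma$ are of the form to which Proposition \ref{p-invarianceST} applies.
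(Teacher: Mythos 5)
Your proposal is correct and follows essentially the same route as the paper's proof: identify the lc places of $(X_b,\Delta_b+D_b)$ with $\QM(Y_b,\Gamma_b^{=1})\simeq\QM(Y,\Gamma^{=1})$ via Lemma \ref{l-lcplacesQM} and the fiberwise log resolution, establish $b$-independence of $A$ directly and of $S,T$ on rational (toroidal divisorial) points via Proposition \ref{p-invarianceST}, extend by continuity (Proposition \ref{prop:STcontinuousQM}), and conclude by compactness of a cross-section of the cone complex. The technical points you flag (that $\Gamma^{=1}$ sits inside the snc divisor of the resolution and that lc places have $A_{X,\Delta+D}=0<1$) are exactly the hypotheses needed for Proposition \ref{p-invarianceST} and are easily verified.
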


\begin{proof}
Let $g:Y\to X$ be a fiberwise log resolution of $(X,\Delta+D)$
and define $\Gamma$ by the formula
$$K_{Y}+\Gamma = g^*(K_X+\Delta+D).$$
Since $g$ is a fiberwise log resolution,
$\Supp(\Gamma)$ is snc and each stratum of $\Supp(\Gamma)$  is
smooth with irreducible fibers over $B$. 
Write $\Gamma^{=1}$ for the sum of the prime divisors with coefficient 1 in $\Gamma$. 

For $b\in B$,
$K_{Y_b}+\Gamma_b = g^*(K_{X_b}+\Delta_b+D_b)$
and 
the valuations on $X_b$ that are lc places of $(X_b,\Delta_b+D_b)$
are precisely the valuations in $\QM(Y_b,\Gamma_b^{=1})$ by Lemma \ref{l-lcplacesQM}.
Note that $A$, $S$, and $T$ are continuous on 
$\QM(Y_b,\Gamma_b^{=1})$ (see Proposition \ref{prop:STcontinuousQM} for $S$ and $T$)
and homogeneous of degree 1. 
Therefore, $\frac{A}{T}$ and $\frac{A}{S}$ are continuous and  homogeneous of degree zero on
$\QM(Y_b,\Gamma_b^{=1})^*$.
This implies that the
functions achieve  minima on the latter set.

Next,
observe that
there is a natural isomorphism of simplicial cone complexes 
$$\QM(Y,\Gamma^{=1}) \simeq \QM(Y_b,\Gamma^{=1}_b),$$
since the stratum of $\Supp(\Gamma^{=1})$ 
are smooth over $B$ and have irreducible fibers. 
To finish the proof, we will show that the map
$$\QM(Y,\Gamma^{=1})^*\overset{\sim}{\to} \QM(Y_b,\Gamma_b^{=1})^*\overset{\frac{A}{S}}{\to} \R_{>0}$$
 is independent of $b\in B$ and the same holds for $\frac{A}{T}$. 
 
It is clear that  $\QM(Y,\Gamma^{=1})^*
\overset{\sim}{\to}
\QM(Y_b,\Gamma_b^{=1})^*\overset{A}{\to} \R_{>0}$ is independent of $b\in B$,
since 
$$(K_{Y}-g^*(K_X+D))\vert_{X_b} = K_{Y_b}-g_b^*(K_{X_b}+D_b).$$
Additionally, Proposition \ref{p-invarianceST} implies 
 $\QM(Y,\Gamma^{=1})^*
\overset{\sim}{\to}
\QM(Y_b,\Gamma_b^{=1})^*\overset{S}{\to} \R_{>0}$
is independent of $b\in B$ along the  rational points of $\QM(Y,\Gamma^{=1})^*$ and the same holds for $T$. Since $S$ and $T$ are continuous 
on  cones, the statement holds for all points in $\QM(\Gamma^{=1})$ and the proof is complete.
\end{proof}

\subsection{Constructibility of thresholds} 

We are now ready to prove that the stability threshold and global log canonical threshold are constructible in families. 

\begin{prop}\label{p-constrible} 
If $(X,\Delta)\to B$ is a $\Q$-Gorenstein family of log Fano pairs
over a normal variety $B$, then the functions
\[
B \ni b \mapsto \min\{ \alpha(X_{\overline{b}},\Delta_{\overline{b}}),1 \}
\quad
\text{ and }
\quad
B \ni b \mapsto \min\{\delta(X_{\overline{b}},\Delta_{\overline{b}}),1 \}
\]
are constructible.
\end{prop}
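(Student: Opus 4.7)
The plan is to combine Noetherian induction on $B$ with the boundedness of complements from Theorem \ref{t-complement} and the invariance result Proposition \ref{p-ind-ad}. By Noetherian induction, it suffices to find a dense open $B^\circ \subseteq B$ over which the two functions are constructible. After shrinking we may assume $B$ is smooth and the finite coefficient set $I$ of $\Delta$ is fixed, and then Theorem \ref{t-complement} produces a uniform $N = N(n,I)$. Using Corollaries \ref{c-Ncompapproxd} and \ref{c-Ncompapproxa} and the elementary inequality $A_{X,\Delta}(E) \le T(E)$ (valid for any lc place $E$ of any $\Q$-complement, since $\Delta^+-\Delta \in |-K_X-\Delta|_{\Q}$), one verifies that for every $b \in B$,
\[
\min\{\delta(X_b,\Delta_b),1\} = \min\Big\{1,\ \inf_E\ \tfrac{A_{X_b,\Delta_b}(E)}{S(E)}\Big\},\qquad
\min\{\alpha(X_b,\Delta_b),1\} = \min\Big\{1,\ \inf_E\ \tfrac{A_{X_b,\Delta_b}(E)}{T(E)}\Big\},
\]
where in each case $E$ ranges over the lc places of $N$-complements of $(X_b,\Delta_b)$ (with $\inf\emptyset := +\infty$).

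The second step is to parametrize all $N$-complements simultaneously. Choose $r$ so that $rN(K_{X/B}+\Delta)$ is Cartier. After shrinking $B$ so that $\mathcal{E} := f_*\mathcal{O}_X(-rN(K_{X/B}+\Delta))$ is locally free and commutes with base change, set $\pi : \mathbb{P} := \mathbb{P}_B(\mathcal{E}) \to B$ with universal divisor $\cD \subset X\times_B \mathbb{P}$. The subset $U \subseteq \mathbb{P}$ parametrizing valid $N$-complements (i.e.\ the coefficient condition holds and $(X_{\pi(p)},\Delta_{\pi(p)} + \tfrac{1}{rN}\cD_p)$ is lc) is constructible: the coefficient condition is Zariski locally closed, and the lc condition is constructible in $\Q$-Gorenstein families of pairs by standard stratification-and-log-resolution. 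Decompose $U = \bigsqcup_i U_i$ into finitely many smooth locally closed strata. After further shrinking each $U_i$ and passing to a finite \'etale cover (which does not affect the geometric-fiber invariants $\alpha,\delta$), the $\Q$-Gorenstein family
\[
(X\times_B U_i,\ \Delta_{U_i} + \tfrac{1}{rN}\cD|_{U_i})\longrightarrow U_i
\]
has lc fibers and admits a fiberwise log resolution.

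Now Proposition \ref{p-ind-ad} applies to each such family, so the infima $d_p$ and $a_p$ of \eqref{eq:a_b,d_b} are constant on $U_i$; call the values $d_i$ and $a_i$. By Chevalley's theorem, $B_i := \pi(U_i) \subseteq B$ is constructible, and every $N$-complement of $(X_b,\Delta_b)$ corresponds to some $p \in U$ over $b$, hence lies in some $U_i$ with $b \in B_i$. Combined with the reformulation above, this yields
\[
\min\{\delta(X_b,\Delta_b),1\} = \min\big\{1,\ \min\{d_i : b \in B_i\}\big\},\qquad \min\{\alpha(X_b,\Delta_b),1\} = \min\big\{1,\ \min\{a_i : b \in B_i\}\big\}.
\]
The finite family $\{B_i\}$ determines a constructible partition of $B$ on each piece of which both functions are constant, proving constructibility. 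The main obstacle I anticipate is the second step, particularly verifying constructibility of the lc locus $U \subseteq \mathbb{P}$ and producing fiberwise log resolutions over each stratum $U_i$; these are standard in spirit but require careful handling of flattening, Noetherian stratification, and finite \'etale base change, compatible with the paper's Conventions.
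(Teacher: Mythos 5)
Your proposal follows essentially the same route as the paper's proof: reduce to lc places of $N$-complements via Corollaries \ref{c-Ncompapproxd} and \ref{c-Ncompapproxa}, parametrize the relevant boundaries by the projective bundle of $f_*\cO_X(-rN(K_{X/B}+\Delta))$, stratify the lc locus into smooth pieces admitting fiberwise log resolutions after \'etale base change, apply Proposition \ref{p-ind-ad} on each stratum, and push forward by Chevalley. The one step you omit, which the paper treats explicitly, is the passage from closed points to all scheme-theoretic points: Proposition \ref{p-ind-ad} and the corollaries only give that $\min\{1,\delta(X_{\overline b},\Delta_{\overline b})\}$ is constant on the \emph{closed} points of each constructible piece, and a priori the value at a generic point of a piece could differ, so the level sets are not yet known to be constructible. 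The paper closes this by invoking the (weak) lower semicontinuity of $\min\{1,\delta\}$ and $\min\{1,\alpha\}$ from \cite{BL18} (i.e.\ the invariants do not increase under specialization), which forces constancy on all points of each piece; your Noetherian induction framing does not substitute for this, so you should add that input (or an equivalent spreading-out argument identifying the value at a non-closed point $b$ with the value at very general closed points of $\overline{\{b\}}$). The remaining differences — insisting on the exact $N$-complement coefficient condition rather than just $\lct(X_b,\Delta_b;D_b)=1$, and phrasing the lc locus as constructible rather than locally closed — are cosmetic.
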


\begin{proof}
We only prove the statement for the stability threshold,
since the statement for the global log canonical threshold 
follows from the same argument, but with Corollary \ref{c-Ncompapproxd} replaced by Corollary \ref{c-Ncompapproxa}.

Fix a positive integer $r$ so that $r(K_X+\Delta)$ is a Cartier divisor.
Next, apply Corollary \ref{c-Ncompapproxd} to find a positive integer $N$ so that the following holds: if $b\in B$ is a closed point and $\d(X_b,\Delta_b)\leq 1$, then $$\d(X_b,\Delta_b)=\inf_{v} \frac{A_{X_b,\Delta_b}(v)}{S_{X_b,\Delta_b}(v)}$$
where the infimum runs through all divisorial valuations $v\in {\rm Val}_{X_b}^*$  that are lc places of an $N$-complement. 
Notice that for such a valuation $v$,
there exists $D_b\in {\frac{1}{rN} |-rN(K_{X_b}+\Delta_b)|}$ such that 
 $(X_b, \Delta_b+D_b)$ is lc and $A_{X_b, \Delta_b+D_b}(v)=0$.
 
To parameterize such boundaries, 
observe that  $f_* \cO_{X}(-rN(K_{X/B}+\Delta))$ commutes with base change,
since Kawamata-Viehweg vanishing implies ${H^1(X_b, \cO_{X_b}(-rN(K_{X_b}+\Delta_b)))}{=0}$  for all $b\in B$.
Set 
$$W:= \mathbb{P}(f_* \cO_{X}(-rN(K_{X/B}+\Delta))^*) \to B $$
and note that, for $b\in B$, 
the $k(b)$-points of $W_b$ are in bijection with divisors in $|-rN(K_{X_b}+\Delta_b)|$. 
Let $H$ be the universal divisor on $X\times_B W$ under this correspondence and set $D: = \tfrac{1}{rN}H$.
By the lower semicontinuity of the log canonical threshold, the locus
\[
Z:=\{w\in W \, \vert \, \lct(X_w,\Delta_w;D_w )=1 \}
\]
is locally closed in $W$. 
The scheme $Z$ together with the $\Q$-divisor $D_Z$ on $X_Z : = X\times_B Z$.
parameterizes boundaries of the desired form.

For a closed point $z\in Z$, set
%$$
%d_z:=
%\inf_v
%\frac{A_{X_z,\Delta_z}(v)}{S_{X_z,\Delta_z}(v)},$$ 
%where the infimum runs through 
%valutions $ v\in \Val_{X_z}^*$
 %such that $A_{X_z,\Delta_z+D_z}(v)=0$. %We assume $d_z=+\infty$ if there is no such $v$.
 $$
d_z:=
\inf \bigg\{ 
\frac{A_{X_z,\Delta_z}(v)}{S_{X_z,\Delta_z}(v)} 
\Big\vert 
v\in \Val_{X_z}^* \text{ and }
A_{X_z,\Delta_z+D_z}(v)=0 \bigg\}
,$$ 
By the above discussion, if $b\in B$ is closed, 
then $ \min \{1 ,\d(X_b,\Delta_b)\}$ equals the infimum of $  \{1 \} \cup \{ d_z \,\vert \, z \in Z_b \} $. 

Now, choose a locally closed decomposition $Z= \cup_{i=1}^r Z_i$ so that each $Z_i$
is smooth and there is an \'etale map $Z'_i \to Z_i$ such that $(X_{Z'_i},\Delta_{Z'_i}+D_{Z'_i})$ admits a fiberwise log resolution.
For a closed point $z\in Z_i$, $d_z$ is independent of $z\in Z_i$
(by Proposition \ref{p-ind-ad})
and we denote the value by $d^{(i)}$.
Hence,  
for  a closed point $b\in B$, $\min\{1,\d(X_b,\Delta_b)\}$
is the minimum of $\{1 \} \cup  \{ d^{(i)} \, \vert \, b\in \pi(Z_i)\}$. 
Therefore, we may write $B= \cup_j B_j$ as a finite union of constructible subsets
such that $B_j \ni b \mapsto \min \{1, \d(X_{\overline{b}},\Delta_{\overline{b}} )\}$ is constant on closed points.
Since the latter function is lower semicontinuous \cite{BL18}, it must be constant on all scheme theoretic points
and the proof is complete.
\end{proof}

\begin{proof}[Proof of Theorem \ref{t-constructible}]
Proposition \ref{p-constrible} implies that the functions are constructible. The main result of  \cite{BL18} implies they are lower semicontinuous. 
\end{proof}

\begin{rem}
To deduce Theorem \ref{t-constructible}, we do not need the full strength of \cite{BL18}. 
Indeed, we only need that the functions $\min\{1,\alpha\}$
and $\min \{1, \delta\}$ are weakly lower semicontinuous, which means that they do not increase under specialization.
\end{rem}

\begin{proof}[Proof of Corollary \ref{c-openness}]
Since a log Fano pair is K-semistable if and only if $\delta \geq 1$,
$B^\circ =\{ b\in B \, \vert\, \delta(X_{\overline{b}},\Delta_{\overline{b}})\geq 1\}$.
Theorem \ref{t-constructible} implies that the latter set is a Zariski open subset of $B$. 
\end{proof}

\begin{proof}[Proof of Theorem \ref{t-moduli}]
The proof of \cite[Cor. 1.4]{BX18}, but with the openness of uniform K-stability  replaced by the openness of K-semistability (Corollary \ref{c-openness}), implies $\mathfrak{X}^{\rm Kss}_{V,n}$
is an Artin stack of finite type over $k$.
With that step complete, we may apply \cite[Cor. 1.2]{ABHLX19} to see  $\mathfrak{X}^{\rm Kss}_{V,n}$ admits a good moduli space. 
By \cite[Rem. 2.2]{ABHLX19},  $k$-points of the good moduli space are in bijection with closed $k$-points of $\mathfrak{X}^{\rm Kss}_{V,n}$. 

It remains to show a $k$-point $[X] \in \mathfrak{X}^{\rm Kss}_{V,n}$ is closed if and only if $X$ is K-polystable. 
By \cite{LWX19} any K-semistable $\Q$-Fano variety degenerates to a uniquely determined K-polystable $\Q$-Fano variety via a special test configuration. 
Hence, if $[X]$ is closed, $X$ must be K-polystable.
Next, assume $X$ is K-polystable. 
Choose a closed $k$-point $[X_0]$ in the closure of $[X]$. Since  $[X_0]$ is closed, it is K-polystable.
Therefore, \cite{BX18} implies $X\simeq X_0$ and, hence, $[X]$ is closed. This completes the proof. %\footnote{\HB{added. while easy to deduce from known results, it doesn't seem to be explicitly explained in the literature.}}
\end{proof}

As a consequence of Corollary \ref{c-openness} and Theorem \ref{t-moduli}, we deduce the openness of K-stability and constructibility of K-polystability.

\begin{thm}\label{t-openkstable}
Let $(X,\Delta)\to B$ be a  $\Q$-Gorenstein 
family of log Fano pairs
over a normal base $B$. Then the set 
\[
B^{\rm Ks}: = \{ b \in B \, \vert \, (X_{\overline{b}},\Delta_{\overline{b}})
\text{ is  K-stable} \}
\]
is a Zariski open subset of $B$. Moreover, the set
\[
B^{\rm Kps}: = \{ b \in B \, \vert \, (X_{\overline{b}},\Delta_{\overline{b}})
\text{ is  K-polystable} \}
\]
is a constructible subset of $B$.
\end{thm}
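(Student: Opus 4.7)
The plan is to reduce to the K-semistable case using Corollary~\ref{c-openness}, and then to exploit the moduli stack and good moduli space furnished by Theorem~\ref{t-moduli}.

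Since $B^{\rm Ks}\subseteq B^{\rm Kps}\subseteq B^\circ$ and $B^\circ$ is Zariski open by Corollary~\ref{c-openness}, I may replace $B$ by $B^\circ$ and assume that every geometric fiber is K-semistable. The family then induces a morphism $\phi\colon B\to\mathfrak{X}^{\rm Kss}_{V,n}$, which composed with the good moduli map yields $\pi\circ\phi\colon B\to X^{\rm Kps}_{V,n}$.

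\emph{Openness of $B^{\rm Ks}$.} On the K-semistable locus, being K-stable is equivalent to having a $0$-dimensional automorphism group. The forward implication is standard, since a positive-dimensional $\Aut(X_{\bar b},\Delta_{\bar b})$ contains a $\mathbb{G}_m$ producing a nontrivial product test configuration with vanishing Donaldson--Futaki invariant; the reverse implication is the main theorem of \cite{BX18}. Given this equivalence, openness of $B^{\rm Ks}$ follows from upper semicontinuity of the fiber dimension of the finite-type relative automorphism group scheme $\Aut_B(X,\Delta)\to B$: the locus where this dimension vanishes is open and coincides with $B^{\rm Ks}$.

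\emph{Constructibility of $B^{\rm Kps}$.} The K-polystable points of $\mathfrak{X}^{\rm Kss}_{V,n}$ are precisely the closed points, one per fiber of $\pi$. Hence $B^{\rm Kps}=\phi^{-1}(\mathcal{C})$, where $\mathcal{C}\subseteq\mathfrak{X}^{\rm Kss}_{V,n}$ denotes the closed-point locus, and it suffices to show that $\mathcal{C}$ is constructible inside the stack. I would argue this by stratifying $\mathfrak{X}^{\rm Kss}_{V,n}$ according to stabilizer dimension (a constructible stratification by semicontinuity) and then applying the Alper--Hall--Rydh \'etale local structure theorem to reduce each stratum to a GIT quotient $[\mathrm{Spec}\,A/\!\!/G]$, where the set of closed $G$-orbits is constructible by classical invariant theory. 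Pulling back along the finite-type morphism $\phi$ then yields constructibility of $B^{\rm Kps}$ in $B$.

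The main obstacle I anticipate is this last step: verifying that the closed-point locus $\mathcal{C}\subseteq\mathfrak{X}^{\rm Kss}_{V,n}$ is constructible. The K-stability half is a clean consequence of automorphism-dimension semicontinuity together with \cite{BX18}; the K-polystability half requires combining the local structure theory of good moduli spaces with classical GIT constructibility of the closed-orbit locus, and it is the interaction between these two ingredients where care is needed.
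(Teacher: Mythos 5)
Your reduction to $B^\circ$ and your treatment of $B^{\rm Kps}$ essentially coincide with the paper's argument: the paper also passes to the morphism $\phi\colon B^\circ\to\mathfrak{X}^{\rm Kss}_{V,n}$, invokes the identification of K-polystable fibers with closed points of the stack (from \cite{LWX18,BX18}), applies the Luna \'etale slice theorem of \cite{ABHLX19} to reduce to an affine GIT chart $[\Spec(A)/G_x]$, and proves constructibility of the GIT polystable locus there via the stratification $Z_r=\{z\mid \dim\mathrm{Stab}(z)\ge r\}$, using that constructibility is \'etale-local. That half of your proposal is sound.

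The openness of $B^{\rm Ks}$, however, rests on a false equivalence. It is not true that a K-semistable log Fano pair is K-stable if and only if its automorphism group is $0$-dimensional; the correct statement is that K-stable is equivalent to K-polystable \emph{together with} finite $\Aut$. A K-semistable pair that is not K-polystable degenerates nontrivially to a \emph{different} K-polystable pair via a test configuration, not via an automorphism, and nothing forces it to carry a positive-dimensional automorphism group. For instance, a cubic surface with a single $A_2$ singularity and otherwise general is K-semistable with finite automorphism group, yet it is strictly semistable (degenerating to $\{xyz=w^3\}$) and hence not K-stable. This is exactly the GIT phenomenon of a semistable point with non-closed orbit and trivial stabilizer. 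Consequently the open locus where the relative automorphism group scheme has $0$-dimensional fibers strictly contains $B^{\rm Ks}$ in general, and your argument only bounds $B^{\rm Ks}$ from above by an open set; moreover \cite{BX18} does not supply the implication you attribute to it. The paper avoids this by transferring K-stability to GIT stability in the slice $\Spec(A)$ — where stability means closed orbit \emph{and} finite stabilizer — and then invoking the classical openness of the GIT stable locus, openness being an \'etale-local property. Your argument for $B^{\rm Ks}$ should be repaired the same way, i.e. by running it through the same local GIT chart you already set up for $B^{\rm Kps}$.
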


\begin{proof}
For simplicity, we assume $\Delta=0$ as the proof for the general case follows similarly by replacing the K-moduli space of $\bQ$-Fano varities by log Fano pairs (see e.g. \cite[Theorem 2.21]{XZ19}). We may also assume that $B$ is irreducible. 
Let $n:=\dim X_{b}$ and $V:= (-K_{X_b})^n$ for a closed point $b\in B$. By Corollary \ref{c-openness}, there is an open subset $B^\circ$ of $B$ parametrizing K-semistable fibers. Denote by $X^\circ:=X\times_B B^\circ$. We take the Artin stack $\fX_{n,V}^{\rm Kss}$ and its separated good moduli space $X_{n,V}^{\rm Kps}$ from Theorem \ref{t-moduli}. Since the Artin stack $\fX_{n,V}^{\rm Kss}$ represents the moduli functor of K-semistable $\bQ$-Fano varieties, there is a morphism $\phi: B^\circ\to \fX_{n,V}^{\rm Kss}$ whose pull-back of the universal family over $\fX_{n,V}^{\rm Kss}$ gives $X^\circ\to B^\circ$.

As shown in the proof of Theorem \ref{t-moduli},  K-polystable $\bQ$-Fano varieties of dimension $n$ and volume $V$ corresponds to closed points of the stack $\fX_{n,V}^{\rm Kss}$. %\footnote{\HB{Does it make sense to also say [BX19]? (i.e. LWX18 shows that K-ps Fanos do not degenerate to K-ss Fanos along test configs... though using the next sentence (that closed points of the stack have reductive stabilizers), one doesn't need [BX19]) }}
Let $x$ be such a closed point with stabilizer group $G_x$ which is reductive by \cite{ABHLX19}. Since $\fX_{n,V}^{\rm Kss}$ is a global quotient stack, by the Luna \'etale slice theorem  \cite[Remark 2.11]{ABHLX19}, there exist an affine scheme $\Spec(A)$ with an action of $G_x$, a $G_x$-fixed closed point $w\in \Spec(A)$, and  a Cartesian diagram \begin{center}
    \begin{tikzcd}
    {[\Spec(A)/G_x]} \arrow[d]\arrow[r, "f"] & \fX_{n,V}^{\rm Kss}\arrow[d, "\pi"]\\
    \Spec(A)\sslash G_x \arrow[r] & X_{n,V}^{\rm Kps}
    \end{tikzcd}
\end{center}
such that $f([w])=[x]$, $\Spec(A)\sslash G_x$ is an \'etale neighborhood of $\pi(x)$,  $f$ sends closed points to closed points, and $f$ induces an isomorphism of stabilizer groups at closed points.
%\footnote{\HB{Added that this is true after shrinking. While I assume this is likely actually true to begin with since the bottom map is also \'etale, this isn't quite explained in [ABHLX19]. }}
In particular, we know that a geometric point $y\in \Spec(A)$ is GIT stable (resp. GIT polystable) if and only if $f([y])$ represents a K-stable (resp. K-polystable) $\bQ$-Fano variety. Since GIT stable locus  is open (see \cite[Chapter 1.4]{GIT} or \cite[Proposition 5.15]{Muk03}), and openness is an \'etale-local property, we know that $B^{\rm Ks}$ is an open subset of $B^\circ$. Similarly, it is a well-known fact that the GIT polystable locus is constructible. %(see e.g. \cite[Proposition 2.14]{GIT} and \cite[Section 9.1]{LWX19}), and 
Here we give a brief proof of this fact. Denote by $Z:= \Spec(A)$ and $\Phi: Z\to Z\sslash G_x$ the good quotient morphism. Let $Z_r:= \{z\in Z\mid \dim \mathrm{Stab}(z)\geq r\}$. Then it is clear that $Z_r$ is a closed subset of $Z$. Let $Z^{\rm ps}$ be the GIT polystable locus of $Z$. Then by similar argument to \cite[Proposition 5.13]{Muk03}, we know that $Z^{\rm ps}= \cup_{r=0}^{\dim G_x} (Z_r\setminus \Phi^{-1}(\Phi(Z_{r+1})))$. Hence $Z^{\rm ps}$ is constructible in $Z$. Since constructibility is an \'etale-local property, we have that $B^{\rm Kps}$ is a constructible subset of $B^\circ$. The proof is finished.
\end{proof}

\subsection{Existence of valuations computing the stability threshold}
A consequence of the proof of Theorem \ref{t-constructible} is the following result.
In Theorem \ref{t-minimizer=lcplaceN}, we will obtain properties of valuations computing $\delta\leq 1$.

\begin{thm}\label{t-quasimonomialless1}
If $(X,\Delta)$ is a log Fano pair with $\delta(X,\Delta)\le1$, then there exists a quasi-monomial valuation $v\in \Val_X^*$ computing $\d(X,\Delta)$.
\end{thm}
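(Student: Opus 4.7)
The plan is to extract the conclusion from the same machinery used in the proof of Proposition \ref{p-constrible}, but applied to a single log Fano pair rather than a family. The key point is that, thanks to the boundedness of complements, $\delta(X,\Delta)$ can be computed as an infimum over a bounded family of log canonical centers, and on each stratum of this family Proposition \ref{p-ind-ad} produces a quasi-monomial minimizer.

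First, I would fix a positive integer $r$ with $r(K_X+\Delta)$ Cartier and apply Corollary \ref{c-Ncompapproxd} to obtain a positive integer $N=N(n, I)$, where $n=\dim X$ and $I$ is the set of coefficients of $\Delta$, such that
\[
\delta(X,\Delta)=\inf_{E}\frac{A_{X,\Delta}(E)}{S(E)},
\]
with the infimum taken over divisorial valuations that are lc places of an $N$-complement. For each such $E$, there is an associated boundary $D\in \tfrac{1}{rN}|-rN(K_X+\Delta)|$ with $(X,\Delta+D)$ lc and $A_{X,\Delta+D}(\ord_E)=0$. Parameterize such boundaries by the projective space $W:=\mathbb{P}(H^0(X,\cO_X(-rN(K_X+\Delta))))$, and let $Z\subseteq W$ denote the locally closed subset where the universal divisor $D_w$ satisfies $(X,\Delta+D_w)$ is lc (lower semicontinuity of the lct). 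Then every $N$-complement lc place arises from some $w\in Z$.

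Next, I would choose a finite locally closed stratification $Z=\bigsqcup_{i=1}^r Z_i$ by smooth pieces, each admitting a finite étale cover $Z_i'\to Z_i$ over which the pair $(X\times Z_i',\Delta+D)$ admits a fiberwise log resolution. Applying Proposition \ref{p-ind-ad} to each family gives a constant
\[
d^{(i)}:=\inf_{v}\frac{A_{X,\Delta}(v)}{S(v)}\quad\text{(}v\text{ lc place of }(X,\Delta+D_z),\ z\in Z_i\text{)}
\]
that is independent of $z\in Z_i$, is actually a minimum, and is achieved at some quasi-monomial valuation $v_i$. By the above characterization of $\delta$,
\[
\delta(X,\Delta)=\min_{1\le i\le r} d^{(i)}.
\]
Picking an index $i_0$ that attains the minimum and taking $v:=v_{i_0}$ yields a quasi-monomial valuation computing $\delta(X,\Delta)$.

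The main potential obstacle is purely bookkeeping: one must ensure that after passing to the étale cover $Z_i'\to Z_i$ the computations $A(v)/S(v)$ for $v$ quasi-monomial over a geometric fiber of $X\times Z_i'\to Z_i'$ descend to the original $(X,\Delta)$ on each fiber. This is immediate because the fibers of $X\times Z_i'\to Z_i'$ are all isomorphic to $X$, so Proposition \ref{p-ind-ad} directly gives a minimizer in $\QM(Y,\Gamma^{=1})^*\subset \Val_X^*$. Everything else is a formal combination of Corollary \ref{c-Ncompapproxd}, Proposition \ref{p-ind-ad}, and the finiteness of the stratification.
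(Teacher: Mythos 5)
Your proposal is correct and follows essentially the same route as the paper's own proof: reduce to lc places of $N$-complements via Corollary \ref{c-Ncompapproxd}, parameterize the corresponding boundaries by the locally closed locus $Z\subseteq W$, stratify so that fiberwise log resolutions exist, and invoke Proposition \ref{p-ind-ad} to get a quasi-monomial minimizer on each stratum and finiteness of the values $d^{(i)}$. No gaps; the argument matches the one in the text.
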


The existence of valuations computing the stability threshold was previously proven  in \cite{BJ17} 
using the generic limit construction. The proof below is entirely different.

\begin{proof}
Fix a positive integer $r$ so that $r(K_X+\Delta)$ is a Cartier divisor
and  apply Corollary \ref{c-Ncompapproxd}
%\footnote{\HB{we need to add a statement in the previous section that improves the result to the $\delta=1$ case}}
to find a positive integer $N$ so that 
 $\d(X,\Delta)=\inf_{v} \frac{A_{X,\Delta}(v)}{S(v)}$,
where the infimum runs through all valuations $v\in {\rm DivVal}_{X}$  that are lc places of an $N$-complement. For such a valuation $v$,
there exists $D \in {\frac{1}{rN} |-rN(K_{X}+\Delta)|}$ such that 
 $(X, \Delta+D)$ is lc and $A_{X, \Delta+D}(v)=0$. We proceed to parameterize such $\Q$-divisors.
 
 Set $W: =\mathbb{P}\left( H^0(X, \cO_{X}(-rN(K_X+\Delta)))^*\right)$. Write $H$ for the universal divisor on $X\times W$
 parameterizing divisors in $|-rN(K_X+\Delta)|$ and set $D: = \frac{1}{rN}H$. 
By the lower semicontinuity of the log canonical threshold, the locus
\[
Z:=\{w\in W \, \vert \, \lct(X_w,\Delta_w;D_w )=1 \}
\]
is locally closed in $W$. 
Hence,  the $\Q$-divisor $D_Z$
on $X\times Z$
 parameterizes boundaries of the desired form.

For each closed point $z\in Z$, set
$d_z:= \inf_{v} \frac{A_{X,\Delta}(v)}{S(v)}$, where the infimum runs through all 
$v\in \Val_{X_z}$ such that 
$A_{X,\Delta+D_z}(v)=0$. 
This  infimum is a minimum by Proposition \ref{p-ind-ad} and is achieved by a quasi-monomial valuation $v_{z}^*$. 
By the above discussion, 
 $ \d(X,\Delta)$ equals $\inf\{ d_z \, \vert \, z\in Z\}$.
 
Now, choose a locally closed decomposition $Z= \cup_{i=1}^r Z_i$ so that each $Z_i$
is smooth and there is an \'etale map $Z'_i \to Z_i$ such that $(X_{Z'_i},\Delta_{Z'_i}+D_{Z'_i})$ admits a fiberwise log resolution.
For closed point $z\in Z_i$, $d_z$ is independent of $z\in Z_i$
by Proposition \ref{p-ind-ad}.
Therefore, $d_z$ takes finitely many values
and we can find  $z_0 \in Z$ such that  
$\d(X,\Delta) = d_{z_0} $ and is computed by $v_{z_0}^*$.
\end{proof}

\begin{rem}
The proof of the existence of valuations computing the stability threshold in \cite{BJ17} requires 
that the base field be uncountable. The assumption is not needed in the above proof.
\end{rem}

\appendix
\section{K-stability and complements}\label{s-weaklyspecial}
In the appendix, we will combine the cone construction and results from \cite{Xu19} to further use complements to study the K-stability  of a log Fano pair $(X,\Delta)$. 
During our investigation, we will relate degenerations of the log Fano pair $(X,\Delta)$ to valuations centered on the vertex of the cone. This idea was first introduced in \cite{Li17} and then extended in \cite{LX16, LWX18} in the study of relations between K-stability and normalized volume. It is a powerful technique and works particularly well for studying the valuations $v$ which computes $\delta(X,\Delta)=1$ (see e.g. \cite{BX18}). Following the spirit of \cite{Li17, LX16, LWX18}, we obtain results on the log Fano pair $(X,\Delta)$ by applying the local results from \cite{Xu19}  to cone singularities.  We note that \cite{Xu19} and the current paper have a similar strategy via the local-to-global principle. Both papers use the existence of bounded complements proved in \cite{Bir16a}.

\subsection{Test configuration and lc places}

In \cite{Tia97,Don02}, the K-(semi,poly)stability of a log Fano pair is defined by looking at the sign of {\it the generalized Futaki invariant} of every {\it test configuration}. For definitions and more background, see e.g. \cite{LX14}, \cite{BHJ17}. 

\begin{defn}\label{d-weaklyspecial}
Let $(X,\Delta)$ be a log Fano pair. 
A test configuration (resp. semiample test configuration) $(\cX,\Delta_{\tc};\cL)$ of $(X,\Delta)$ is said to be \emph{weakly special}
if $(\cX,\Delta_{\tc}+\cX_0)$ is log canonical and $\cL\sim_{\bQ} -K_{\cX}-\Delta_{\tc}$ is ample (resp. semiample) over $\bA^1$.
A finite set of (possibly trivial) $\Z$-valued divisorial valuations
%\footnote{I am assuming that divisorial valuations are of the form $c\cdot\ord_E$ with $c\in\Z_{\geq 0}$ -- YL} 
$\{v_1,\cdots, v_d\}\subset\Val_X$ is called a \emph{weakly special collection}
if there exists a weakly special semiample test configuration $(\cX,\Delta_{\tc};\cL)$ of $(X,\Delta)$
such that $v_i=  v_{\cX_0^{(i)}}$ (see \cite[Def. 4.4]{BHJ17})
%for some $d_i\in \bQ_{>0}$\footnote{\CX{I add a coefficient}}  
where $\{\cX_0^{(i)}\}_{i=1}^d$ are all
irreducible components of $\cX_0$. 
A \emph{prime} divisor $E$ over $(X,\Delta)$ is said to be \emph{weakly special} 
if there exists a weakly special test configuration 
$(\cX,\Delta_{\tc};\cL)$ of $(X,\Delta)$ with $\cX_0$ irreducible 
such that $v_{\cX_0}=c\cdot\ord_E$ for some $c\in\Z_{>0}$.
\end{defn}

\begin{thm}\label{thm:weaklyspecial}
Let $n$ be a positive integer and $I\subset \bQ\cap[0,1]$ a finite set. Then there exists a positive integer $N=N(n,I)$ satisfying the following:

If $(X,\Delta)$ is an $n$-dimensional log Fano pair such that coefficients of $\Delta$ belong to $I$, then a finite set of $\Z$-valued divisorial valuations
$\{v_1,\cdots, v_d\}\subset\Val_X$ is a weakly special collection
if and only if there exists an $N$-complement $\Delta^+$ of $(X,\Delta)$ such that each $v_i$ is an lc place of 
$(X,\Delta^+)$.
\end{thm}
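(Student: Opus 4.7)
The plan is to prove the two implications separately, with boundedness of complements (Theorem \ref{t-globcomp} based on \cite{Bir16a}, together with its local analogue from \cite{Xu19}) providing the uniform bound on $N$.

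For the direction $(\Leftarrow)$: Assume $\Delta^+$ is an $N$-complement of $(X,\Delta)$ with each $v_i=\ord_{E_i}$ an lc place of $(X,\Delta^+)$. I would first produce a single birational model $\pi\colon Y\to X$ on which all the $E_i$ appear as prime divisors, $\Exc(\pi)\subseteq E_1+\cdots+E_d$, and $Y$ is of Fano type, by applying the argument of Lemma \ref{l-lcplace} simultaneously to $E_1,\ldots,E_d$ via \cite{BCHM10}. The lc pair $(Y,\Gamma)$ defined by $K_Y+\Gamma=\pi^*(K_X+\Delta^+)$ then satisfies $E_1+\cdots+E_d\subseteq \Gamma^{=1}$. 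The desired weakly special semiample test configuration would be produced by taking the Rees-algebra construction associated to the $\Z^d$-valued filtration $\cF^{\bullet}_{v_1,\ldots,v_d}$ on $R(X,-r(K_X+\Delta))$ and running a suitable relative MMP over $\bA^1$ to arrive at a model on which $-K-\Delta_{\tc}$ is semiample and the central fiber decomposes into components with valuations exactly $v_1,\ldots,v_d$, in the spirit of the constructions in \cite{LX14} and \cite{BX18}.

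For the direction $(\Rightarrow)$: Given a weakly special semiample test configuration $(\cX,\Delta_{\tc};\cL)$ with $\cX_0=\sum \cX_0^{(i)}$ and $v_i=v_{\cX_0^{(i)}}$, the strategy is to pass to the cone, in line with the appendix's stated approach. Let $Y:=\Spec \bigoplus_{m\geq 0}H^0(X,\MO_X(-mr(K_X+\Delta)))$ be the affine cone over $(X,-r(K_X+\Delta))$, with $\Delta_Y$ the cone over $\Delta$ and $o$ the vertex. Each $v_i$ lifts canonically to a divisorial valuation $\tilde v_i$ on $Y$ centered at $o$, and the given test configuration translates into a weakly special collection $\{\tilde v_1,\ldots,\tilde v_d\}$ on the local pair $(Y,\Delta_Y,o)$ in the sense of \cite{Xu19}. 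Applying the local analogue proved in \cite{Xu19} (which itself relies on the bounded local complements from \cite{Bir16a}) yields a local $N$-complement at $o$, with $N$ depending only on $n$ and $I$, whose lc places contain all $\tilde v_i$. This local complement descends via the cone construction to an $N$-complement of $(X,\Delta)$ having each $v_i$ as an lc place.

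The main obstacle I anticipate is in the $(\Leftarrow)$ direction: verifying that the MMP/Rees construction outputs a semiample test configuration whose central fiber components have valuations corresponding exactly to the prescribed $v_1,\ldots,v_d$—not fewer and not more—while maintaining the lc condition along $\Delta_{\tc}+\cX_0$ and the trivial canonical class along the central fiber. A secondary subtlety lies in the $(\Rightarrow)$ direction: carrying out the $\bG_m$-equivariant translation between the global degeneration and the local filtration at the cone vertex carefully enough that the local $N$-complement from \cite{Xu19} pulls back to a global $N$-complement with the prescribed lc places, rather than merely to a $\bQ$-complement.
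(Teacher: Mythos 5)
Your high-level plan (cone construction plus bounded complements for one direction, a Rees/MMP construction for the other) matches the paper's strategy, but in both directions the step you flag as an "anticipated obstacle" is in fact the core of the proof, and your proposal does not contain the ideas that resolve it.

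In the $(\Rightarrow)$ direction, two things are missing. First, there is no off-the-shelf "local analogue in \cite{Xu19}" asserting that the valuations attached to a weakly special degeneration at a klt singularity are lc places of a bounded local complement; the paper must construct, for each large $k$, an extraction $\mu_k\colon W_k\to Z$ of the divisors $E_{i,k}$ corresponding to carefully weighted quasi-monomial combinations $w_{i,k}$ of $\ord_{X_0}$ and $\ord_{E_{i,\infty}}$ (note that $\ord_{E_{i,\infty}}$ alone is \emph{not} centered at the vertex $o$, so there is no single "canonical lift"; the weights $(1-\tfrac{mA_{X,\Delta}(v_i)}{k},\tfrac{c_i}{k})$ are chosen precisely so that the Rees algebra of $\bigcap_i\fa_{p/k}(w_{i,k})$ reproduces the test configuration), verify lc-ness of $(W_k,(\mu_k)_*^{-1}\Gamma+\sum E_{i,k})$ by inversion of adjunction on the degenerate cone, and only then apply \cite[Thm.~1.8]{Bir16a} relatively over $Z$. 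Second, and more seriously, the descent you call a "secondary subtlety" is the decisive step: the local $N_1$-complement at $o$ is not $\bG_m$-invariant, so it does not descend to $X$. The paper gets around this by showing that $w_0=\ord_{X_0}$ is itself an lc place of the local complement for $k\gg1$ (via the estimate $(1-\epsilon_k)w_0\le w_{i,k}\le(1+\epsilon_k)w_0$ together with integrality of $w_0(f_k)$), replacing $f_k$ by its initial term $\bin(f_k)$, and invoking \cite[Thm.~3.1]{dFEM10} to keep lc-ness; only the resulting $\bG_m$-invariant complement is a cone over a $\bQ$-complement of $(X,\Delta)$, which is then upgraded to an $N$-complement by Theorem \ref{t-globcomp}. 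Without this chain your argument produces only a local complement at the cone vertex.

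In the $(\Leftarrow)$ direction, the difficulty you name --- that the central fiber must have components with valuations exactly $v_1,\ldots,v_d$, "not fewer and not more" --- is exactly where the work lies, and your proposal offers no mechanism for it. (Also, the relevant filtration is the single $\Z$-filtration $\cF^pR_j=\{s\mid v_i(s)\ge p+mA_{X,\Delta}(v_i)j\ \forall i\}$, not a $\Z^d$-filtration, whose Rees algebra would give a degeneration over $\bA^d$.) The ample model of the Rees construction will in general contract some of the prescribed divisors, so the "not fewer" problem is real: the paper handles it by first building the log canonical model $W_k'$ over the cone (whose exceptional locus realizes only a subcollection $\{E_{i,k}'\}_{i\le d'}$), proving that the contracted valuations remain lc places of $(W_k',(\mu_k')_*^{-1}\Gamma+\sum E_{i,k}')$ via the identity $I_p=\bigcap_{i=1}^{d}\fa_{p/k}(w_{i,k})$, then showing that the corresponding divisors $F_{i'}$ (quasi-monomial in $X\times\{0\}$ and $E_{i'}\times\bA^1$) are lc places of $(\cX',\cX_0'+\Delta_{\tc}')$ and extracting them with \cite{BCHM10} to obtain the semiample test configuration. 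The lc condition on $(\cX,\Delta_{\tc}+\cX_0)$ is likewise not automatic from the MMP; it is deduced from slc-ness of the degenerate cone together with inversion of adjunction. As written, the proposal is a correct outline of intent but not a proof.
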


Note that a special case of Theorem \ref{thm:weaklyspecial} on weakly special divisors with small $\beta$-invariants is proved in \cite[Theorem 3.10]{ZZ19} independently. %\footnote{\YL{I mention [ZZ19] since they use the global approach}}. 

Our argument is a refinement of \cite[Proof of Lemma 3.4]{LWX18}. In particular, we track the integral structure of test configurations and complements. 

\medskip

We first show that any weakly special collection of $\Z$-valued divisorial
valuations $\{v_1,\cdots,v_d\}$ are lc places of a common $\bQ$-complement.
We will use the following cone construction: Fix a positive integer $m$ such that $L:=-m(K_X+\Delta)$ is Cartier. Let $Z:=C(X,L)$ be the affine cone over $X$ with polarization $L$. Denote by  $o\in Z$ the cone vertex. Let $\Gamma$ be the Zariski closure of the pull-back of $\Delta$ under the projection $Z\setminus\{o\}\to X$. 
Denote by $w_0:=\ord_{X_0}$ the canonical valuation in $\Val_Z$ where $X_0$ is the exceptional divisor of blowing up the cone vertex $o$ in $Z$.
Assume $v_i=c_i\cdot\ord_{E_i}$ with $c_i\in\Z_{\geq 0}$. 
Denote by $k_0:=m\cdot\max_{1\leq i\leq d}\{A_{X,\Delta}(v_i)\}$.
For each divisor $E_i$ over $X$ and each integer $k> k_0$, we consider the 
divisorial valuation $w_{i,k}$ on $Z$ as a quasi-monomial combination of $\ord_{X_0}$ and $\ord_{E_{i,\infty}}$ with weights $(1-\frac{mA_{X,\Delta}(v_i)}{k},\frac{c_i}{k})$ where $E_{i,\infty}$ is the pull-back of $E_i$ under the projection $Z\setminus\{o\}\to X$.
Since $m A_{X,\Delta}(E_i)$ is a positive integer, we know that the value group of $w_{i,k}$ is generated by $1$ and $\frac{c_i}{k}$.
Thus for any $k>k_0$ there is a prime divisor $E_{i,k}$ over $Z$ centered at $o$ such that $\frac{k}{\gcd(k,c_i)}\cdot w_{i, k}=\ord_{E_{i,k}}$.  

\begin{prop}\label{p-extracting}
Suppose $\{v_1,\cdots,v_d\}$ is a weakly special collection of $\Z$-valued divisorial valuations over $(X,\Delta)$. Then for any $k$ sufficiently large, there exists a proper birational morphism $\mu_k:W_k\to Z$ from a normal variety $W_k$ such that 
\begin{enumerate}
    \item $\mu_k$ is an isomorphism over $Z\setminus\{o\}$ and $\mu_k^{-1}(o)=\cup_{i=1}^d E_{i,k}$;
    \item $(W_k,(\mu_k)_*^{-1}\Gamma + \sum_{i=1}^d E_{i,k})$ is log canonical;
    \item $-(K_{W_k}+(\mu_k)_*^{-1}\Gamma + \sum_{i=1}^d E_{i,k})$ is semiample over $Z$.
\end{enumerate}
\end{prop}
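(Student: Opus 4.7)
The plan is to construct $W_k$ directly from the weakly special test configuration $(\cX,\Delta_{\tc};\cL)$ through a cone-over-test-configuration construction, following the strategy of \cite[Proof of Lemma 3.4]{LWX18}. After replacing $\cL$ with a sufficiently divisible multiple, we may assume $\cL$ is Cartier with $\cL\sim-m(K_{\cX/\bA^1}+\Delta_{\tc})$ and $\cL|_X=L$. Form the relative affine cone
\[
\cZ:=\Spec_{\bA^1}\Big(\bigoplus_{j\ge 0}\pi_*\cL^{\otimes j}\Big)\to\bA^1,
\]
a flat family with generic fiber $Z=C(X,L)$ and central fiber $\cZ_0=\bigcup_{i=1}^d\cZ_0^{(i)}$, where $\cZ_0^{(i)}:=C(\cX_0^{(i)},\cL|_{\cX_0^{(i)}})$. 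Let $\Gamma_\cZ$ denote the closure of the pullback of $\Delta_{\tc}$ away from the cone vertex section $\sigma:\bA^1\to\cZ$. The weakly special hypothesis that $(\cX,\Delta_{\tc}+\cX_0)$ is lc, together with the cone correspondence for log canonicity \cite[\S3.1]{Kol13}, yields that $(\cZ,\Gamma_\cZ+\cZ_0)$ is lc in a neighborhood of $\sigma(\bA^1)$.

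To produce $W_k$, form the GIT-style quotient of $\cZ$ by the diagonal one-parameter subgroup $\lambda_k\subset\bG_m^{\mathrm{tc}}\times\bG_m^{\mathrm{cone}}$ of weights $(k,-m)$, where $\bG_m^{\mathrm{tc}}$ denotes the test-configuration action (covering the $\bA^1$-scaling) and $\bG_m^{\mathrm{cone}}$ the fiberwise cone scaling on $\cZ\to\bA^1$. On the open locus $\cZ\setminus(\sigma(\bA^1)\cup\cZ_0)$, this quotient identifies $\lambda_k$-orbits with points of $Z\setminus\{o\}$, while each component $\cZ_0^{(i)}$ is $\lambda_k$-invariant and descends to a prime divisor on $W_k$; a direct weight computation matches its induced valuation on $K(Z)=K(W_k)$ with the quasi-monomial valuation $w_{i,k}=(1-\tfrac{mA_{X,\Delta}(v_i)}{k})\ord_{X_0}+\tfrac{c_i}{k}\ord_{E_{i,\infty}}$, identifying the descended divisor with $E_{i,k}$. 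The hypothesis $k>k_0$ ensures positivity of the weights $\alpha_i:=1-\tfrac{mA_{X,\Delta}(v_i)}{k}$ and that $w_{i,k}$ is divisorial after normalization. If the raw quotient extracts additional exceptional divisors, remove them via a relative MMP over $Z$, which exists because $Z$ is of Fano type as a cone over a log Fano pair.

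With $W_k$ so constructed, condition (1) is immediate from the description. Condition (2) transports log canonicity: the components $\cZ_0^{(i)}$ carry coefficient $1$ in $(\cZ,\Gamma_\cZ+\cZ_0)$ and descend with coefficient $1$ to $E_{i,k}$, preserving log canonicity under the (tame) quotient. Condition (3) follows from the relative semiampleness of $\cL$ over $\bA^1$---equivalently of $-K_\cZ-\Gamma_\cZ$ relative to $\bA^1$---whose $\lambda_k$-invariant part descends to a semiample $\Q$-Cartier divisor on $W_k$ equal to $-(K_{W_k}+(\mu_k)_*^{-1}\Gamma+\sum_{i=1}^dE_{i,k})$ over $Z$. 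The main obstacle is making the quotient construction precise: verifying that $\lambda_k$ has finite stabilizers on the stable locus (using $k>k_0$), that each $\cZ_0^{(i)}$ descends to exactly one irreducible divisor realizing $w_{i,k}$ (rather than a higher-codimension stratum or several components), and that any MMP cleanup preserves the established log canonicity and semiampleness simultaneously.
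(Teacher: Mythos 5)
Your overall strategy---pass to the cone over the test configuration and recover $W_k$ by a $\bG_m$-quotient---is the right philosophy (it is the \cite{Li17}, \cite{LWX18} picture the paper also invokes), but the step you defer as ``the main obstacle'' is in fact the proof, and as written your quotient does not even produce a birational model of $Z$. Over $\bG_m\subset\bA^1$ one has $\cZ|_{\bG_m}\simeq Z\times\bG_m$ with $\bG_m^{\mathrm{tc}}$ acting by translation on the second factor, so $\lambda_k(s)$ sends $(z,t)$ to $(s^{-m}\cdot z,\,s^kt)$; two points of $Z\times\{1\}$ lie in the same orbit exactly when they differ by cone-scaling by $\zeta^{-m}$ for some $k$-th root of unity $\zeta$. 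Hence the orbit space of $\cZ\setminus\cZ_0$ is birational to the quotient of $Z$ by a cyclic group of order $k/\gcd(k,m)>1$, not to $Z$; equivalently, $K(\cZ)^{\lambda_k}$ is a proper subfield of $K(Z)$. Even after correcting the weights, $\cZ$ is affine over $\bA^1$, so a naive invariant-ring quotient is affine and cannot be the (non-affine) total space of a nontrivial extraction $W_k\to Z$; one must choose a linearization, and writing down the correct one amounts to writing down the graded ideal sequence $I_p=\bigoplus_j\cF^{p-jk}R_j$ and setting $W_k'=\Proj_R\bigoplus_pI_p$, which is how the paper actually proceeds. The only quotient that survives in the paper is the identification of the exceptional divisor with the orbifold quotient of the $(n+1)$-dimensional degenerate cone $Z_0=\Spec(\gr_{I_\bullet}R)$ by a $1$-PS, used to prove (2) via inversion of adjunction---not a quotient of the $(n+2)$-dimensional total space $\cZ$.

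Second, since $\cL$ is only semiample over $\bA^1$, your $\cZ=\Spec_{\bA^1}\bigl(\bigoplus_j\pi_*\cL^{\otimes j}\bigr)$ is the cone over the \emph{ample model} $\cX'$ of $(\cX,\Delta_{\tc};\cL)$, so its central fiber has only $d'\le d$ components: the components of $\cX_0$ contracted by $\cX\to\cX'$ leave no trace in $\cZ_0$, and your construction cannot see the corresponding $E_{i,k}$. Your ``relative MMP cleanup'' goes in the wrong direction---the issue is not removing extra divisors but \emph{adding} the missing ones. The paper handles this by first building $W_k'$ extracting $E_{1,k}',\dots,E_{d',k}'$, proving via the computation $w_{i',k}(\cO_{W_k'}(-1))=1/k$ that the remaining $w_{i',k}$ are lc places of $(W_k',(\mu_k')_*^{-1}\Gamma+\sum_{i\le d'}E_{i,k}')$, and only then extracting them crepantly via \cite{BCHM10}; this is also the only reason (3) weakens from ``ample'' to ``semiample.'' Finally, (3) does not descend from semiampleness of $\cL$ over $\bA^1$: the required positivity is relative to $Z$ along the exceptional locus and comes from $\cO_{W_k'}(1)$ being ample over $Z$ for the blowup of the ideal sequence, combined with the log discrepancy computation $A_{Z,\Gamma}(w_{i,k})=1/m$.
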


\begin{proof}
Let $(\cX,\Delta_{\tc})$ be the weakly special semiample test configuration corresponding to $\{v_i\}_{i=1}^d$. 
For $L=-m(K_X+\Delta)$, we denote by 
\[
R=\oplus_{j=0}^\infty R_j:=\oplus_{j=0}^\infty H^0(X,jL).
\]
It is clear that $Z=\Spec(R)$. Let us take an ample model $\rho:\cX\to \cX'$ of $-K_{\cX}-\Delta_{\tc}$ over $\bA^1$. Denote by $\Delta_{\tc}':=\rho_*\Delta_{\tc}$, then it is clear that $(\cX',\Delta_{\tc}')$ is a weakly special test configuration of $(X,\Delta)$. After reindexing we can assume that $\rho$ precisely contracts $\cX_0^{(i)}$ for $d'<i\leq d$ where $d'$ is the number of irreducible components of $\cX_0'$. 

Before constructing $\mu_k: W_k\to Z$, we first construct $\mu_k':W_k'\to Z$ such that (1),(2), and (3) hold as well after replacing $(\mu_k, W_k, d)$ and ``semiample'' by $(\mu_k', W_k', d')$ and ``ample'', respectively. We denote these new statements by (1'), (2'), and (3').

For part (1'), consider the following 
 $\Z$-filtration of $R$
\begin{equation}\label{eq:lc-blowup-filt}
\cF^{p}R_j:= \{s\in R_j\mid v_i(s)\geq p+mA_{X,\Delta}(v_i)j\quad\textrm{ for any } 1\leq i\leq d'\}.
\end{equation}
By \cite[Propositions 2.15, 4.11, and Lemma 5.17]{BHJ17}, we know that the filtration $\cF^{\bullet} R$ is finitely generated,
\begin{equation}\label{eq:tc-filt}
\cX'\cong\Proj\bigoplus_{j=0}^\infty \bigoplus_{p=-\infty}^{\infty} t^{-p}\cF^p R_j,
\end{equation}
and $-m(K_{\cX'/\bA^1}+\Delta_{\tc}')$ corresponds to $\cO(1)$ under the grading of $j$. Consider the ideal sequence
$
I_p:=\oplus_{j=0}^\infty \cF^{p-jk} R_j\subset R
$
for $p\in \Z$. By \eqref{eq:lc-blowup-filt} we see that $\cF^{p-jk}R_j=R_j$, whenever $j\geq p/(k-mA_{X,\Delta}(v_i))$ for all $1\leq i\leq d'$. So $I_p$ is cosupported at $o$ if $p>0$, and $I_p=R$ if $p\leq 0$. 
Since $\cF^{\bullet}R$ is finitely generated and multiplicative, we see that $\oplus_{p=0}^\infty I_p$ is also a finitely generated $R$-algebra. Let $W_k':=\Proj_R \oplus_{p=0}^\infty I_p$ with $\mu_k':W_k'\to Z$ the projection morphism.
%The normality of $W_k'$ follows from the normality of $\cX'$ and the fact that their projective coordinate rings are isomorphic up to a grading shift. 

Next we show that $I_p=\cap_{i=1}^{d'} \fa_{p/k}(w_{i,k})$. Since $w_{i,k}$ is $\bG_m$-invariant, its valuation ideals are graded. Hence it suffices to verify the above equality for all homogeneous elements. Let $s\in R_j$ be a homogeneous element. From the definition of $w_{i,k}$ we know
\[
w_{i,k}(s)=\left(1-\frac{mA_{X,\Delta}(v_i)}{k}\right)j + \frac{c_i}{k}\ord_{E_i}(s)=\frac{1}{k}\big(jk- mA_{X,\Delta}(v_i)j + v_i(s)\big).
\]
Thus $w_{i,k}(s)\geq p/k$ if and only if $v_i(s)\geq p-jk+mA_{X,\Delta}(v_i)j$, which implies $I_p=\cap_{i=1}^{d'} \fa_{p/k}(w_{i,k})$. Since for each $p\geq 0$ the ideal $I_p$ is integrally closed as a finite intersection of valuation ideals, we know that $W_k'$ is normal. Besides, by \cite[Lemma 5.17]{BHJ17} if any $i$ is dropped from the intersection on the right-hand side of \eqref{eq:lc-blowup-filt} we would not get $\cF^p R_j$. Hence \cite[Theorem 1.10]{BHJ17} implies that for $p$ sufficiently divisible, the set of Rees valuations of $I_p$ is given by $\{\frac{k}{p}w_{i,k}\}_{i=1}^{d'}$. Thus $\mu_k':W_k'\to Z$ precisely extracts $\cup_{i=1}^{d'} E_{i,k}'$ where $E_{i,k}'$ is the birational transform of $E_{i,k}$ and we confirm part (1').

For part (3'), we know that 
\begin{align*}
A_{Z,\Gamma}(w_{i,k})&= \left(1-\frac{mA_{X,\Delta}(v_i)}{k}\right)A_{Z,\Gamma}(X_0)+\frac{c_i}{k}A_{Z,\Gamma}(E_{i,\infty})\\
& = \left(1-\frac{mA_{X,\Delta}(v_i)}{k}\right)\frac{1}{m}+\frac{1}{k}A_{X,\Delta}(v_i)=\frac{1}{m}.
\end{align*}
Hence $A_{Z,\Gamma}(E_{i,k})=\frac{k}{m\gcd(k,c_i)}$. Straight computation shows
\begin{equation}\label{eq:W_k'}
K_{W_k'}+(\mu_k')_*^{-1}\Gamma + \sum_{i=1}^{d'} E_{i,k}'=\mu_k'^*(K_Z+\Gamma)+\sum_{i=1}^{d'} A_{Z,\Gamma}(E_{i,k}) E_{i,k}'\sim_{\mu_k',\Q}\sum_{i=1}^{d'} \frac{kE_{i,k}'}{m\gcd(k,c_i)}.
\end{equation}
From the above discussion on Rees valuations, we know that $\cO_{W_k'}(-1)=\sum_{i=1}^{d'}\frac{E_{i,k}'}{\gcd(k,c_i)}$ is anti-ample over $Z$. Thus  $-(K_{W_k'}+(\mu_k')_*^{-1}\Gamma + \sum_{i=1}^{d'} E_{i,k}')\sim_{\mu_k',\bQ} \frac{k}{m}\cO_{W_k'}(1)$ is ample over $Z$ which confirms part (3'). 

For part (2'), notice that the ideal sequence $I_\bullet$ induces a $\bG_m$-equivariant degeneration $\cZ\to \bA^1$ of $o\in Z$, where $\cZ:=\Spec \oplus_{p\in \Z} t^{-p} I_p$. Then we have the central fiber $Z_0=\Spec( \gr_{I_\bullet} R)$ where 
\begin{equation}\label{eq:cone-tc-1}
\gr_{I_\bullet} R:= \bigoplus_{p=0}^\infty I_p/I_{p+1}
=\bigoplus_{p=-\infty}^\infty \bigoplus_{j=0}^\infty \cF^{p-jk} R_j/\cF^{p-jk+1}R_j.
\end{equation}
Here we are using the fact that $\cF^{p-jk}R_j=R_j$ whenever $p\leq 0$. From \eqref{eq:tc-filt} we know that 
\begin{equation}\label{eq:cone-tc-2}
\cX_0'\cong \Proj\bigoplus_{j=0}^\infty \bigoplus_{p=-\infty}^{\infty} \cF^p R_j/\cF^{p+1}R_j
\end{equation}
and $-m(K_{\cX_0'}+\Delta'_{\tc,0})$ corresponds to $\cO(1)$ under the grading of $j$. It is clear that $\gr_{I_\bullet} R$ is isomorphic to $\oplus_{j=0}^\infty \oplus_{p=-\infty}^{\infty} \cF^p R_j/\cF^{p+1}R_j$ up to a grading shift. Let $\Gamma_{\cZ}$ be the effective $\bQ$-divisor on $\cZ$ as the Zariski closure of $\Gamma\times (\bA^1\setminus\{0\})$. Denote by $\Gamma_0:=\Gamma_{\cZ}|_{Z_0}$ the degeneration of $\Gamma$ to $Z_0$. Then $(Z_0,\Gamma_0)$ is semi-log canonical (slc) since it is isomorphic to the affine cone over the slc pair $(\cX_0',\Delta_{\tc,0}')$ with the polarization $-m(K_{\cX_0'}+\Delta_{\tc,0}')$.
%\footnote{\HB{Do we need to replace $L$ for $Z_0$ to be the cone (rather than some orbifold cone). A  similar situation occurs in the proof of Proposition A.5. I realize this is minor. }\CX{Since the referee doesn't complain here, I think we can just leave as it is.}}. 
Thus we know that $(\cZ, \Gamma_{\cZ}, \xi;\eta)$ is a weakly special test configuration of $(Z,\Gamma, \xi)$ in the sense of \cite[Definition 2.14]{LWX18} where $\xi$ (resp. $\eta$) is the vector field on $Z$ (resp. $\cZ$) induced by the grading of $j$ (resp. of $p$). We will follow the idea of \cite[Proof of Lemma 2.21(2)]{LWX18} to show log canonicity of $(W_k',(\mu_k')_*^{-1}\Gamma +\sum_{i=1}^{d'} E_{i,k}')$.

%\textcolor{red}{Thus \cite[Lemma 2.21(2)]{LWX18} implies that $(W_k',(\mu_k')_*^{-1}\Gamma +\sum_{i=1}^{d'} E_{i,k}')$ is log canonical. This proves part (2').}

%It has been shown in \cite{Li17} that $(\sum_{i=1}^{d'} E_{i,k}', G_k')$ is the orbifold quotient of $(Z_0,\Gamma_0)$ by the $1$-PS corresponding to the grading of $p$, where $G_k'$ is the different of $(W_k',(\mu_k')_*^{-1}\Gamma +\sum_{i=1}^{d'} E_{i,k}')$ along $\sum_{i=1}^{d'} E_{i,k}'$. Hence $(\sum_{i=1}^{d'} E_{i,k}', G_k')$ is also slc which confirms part (2') by inversion of adjunction (see \cite[Theorem 4.9]{Kol13}).

%\textcolor{blue}{YL: The argument above for (2') needs some extra argument as pointed out by the referee. Below I'll give a detailed account.}

Denote by $E':=\sum_{i=1}^{d'} \frac{E_{i,k}'}{\gcd(k,c_i)}$ and $E'_{\red}:=\sum_{i=1}^{d'} E_{i,k}'$. From the proof of part (3') we know that $E'=\cO_{W_k'}(-1)$ is anti-ample over $Z$. Let $l$ be a sufficiently divisible positive integer such that $lE'$ is Cartier on $W_k'$. The test configuration $(\cZ,\Gamma_{\cZ},\xi;\eta)$ has the natural $\bG_m$-action generated by $\eta$. Consider the $\bmu_{l}$-action on $(\cZ,\Gamma_{\cZ})$ where $\bmu_l<\bG_m$ is the multiplicative group of $l$-th roots of unity. Let $(\cZ',\Gamma_{\cZ'}):=(\cZ,\Gamma_{\cZ})/\bmu_l$. By construction, we have that $\cZ':=\Spec\oplus_{p\in\Z} t^{-p} I_{lp}\to \bA^1_t$, such that the quotient map $\sigma:\cZ\to \cZ'$ is a lifting of the map $\bA_t^1\to \bA_t^1$, $t\mapsto t^l$. Clearly $\sigma$ is \'etale away from the central fibers. Since $Z_0=\Spec\oplus_{p\in \Z_{\geq 0}} I_p/I_{p+1}$, we know that $Z_0/\bmu_l=\Spec \oplus_{p\in \Z_{\geq 0}} I_{lp}/I_{lp+1}$, and $\Supp (Z_0/\bmu_l)=\Supp(Z_0')$.

Next, we show that $Z_0/\bmu_l \cong C_a(E'_{\red}, \cO_{E'_{\red}}(-lE'|_{E'_{\red}}))$ where $C_a(X,L)$ represents the affine cone over $X$ with polarization $L$ (see \cite[Section 3.1]{Kol13}). Indeed, from the equality  $I_p=\cap_{i=1}^{d'} \fa_{p/k}(w_{i,k})$ we see that $I_p=(\mu_k')_*\cO_{W_k'}(\lfloor -pE'\rfloor)$. Since $lE'$ is Cartier and $\lceil E'\rceil= E'_{\red}$, we know that $\lfloor -(lp+1) E'\rfloor= -lpE' - E'_{\red}$. Then we have a short exact sequence
\begin{equation}\label{eq:lc-blowup-exact-seq}
0\to \cO_{W_k'}(-lpE'-E'_{\red})\to \cO_{W_k'}(-lpE') \to \cO_{E'_{\red}} (-lpE'|_{E_{\red}'})\to 0.
\end{equation}
Since $l$ is sufficiently divisible and $-E'$ is ample over $Z$, we have $R^1 (\mu_k')_* \cO_{W_k'}(-lpE'-E'_{\red})=0$ for $p\geq 1$ by Serre vanishing. Thus taking $(\mu_k')_*$ of \eqref{eq:lc-blowup-exact-seq} yields a short exact sequence
\[
0\to I_{lp+1}\to I_{lp}\to H^0(E_{\red}', \cO_{E'_{\red}} (-lpE'|_{E_{\red}'}))\to 0,
\]
i.e. $I_{lp}/I_{lp+1}\cong H^0(E_{\red}', \cO_{E'_{\red}} (-lpE'|_{E_{\red}'}))$ when $p\geq 1$. If $p=0$, then the above arguments give an injection $I_0/I_1\hookrightarrow H^0(E_{\red}',\cO_{E_{\red}'})$ which implies that they are isomorphic   as $h^0(E_{\red}',\cO_{E_{\red}'})=1$ by reducedness of $E_{\red}'$. 

Since $(\cZ, \Gamma_{\cZ},\xi;\eta)$ is weakly special, we know that $(\cZ, \Gamma_{\cZ}+Z_0)$ is log canonical. In particular, we know that $Z_0$ is reduced and so is $Z_0/\bmu_l=(Z_0')_{\red}$.  Since the quotient map $\sigma: \cZ\to \cZ'$ is \'etale away from central fibers, we have that $K_{\cZ}+\Gamma_{\cZ}+ Z_0 = \sigma^*(K_{\cZ'} +\Gamma_{\cZ'} + (Z_0')_{\red})$.  Therefore, the quotient $(\cZ', \Gamma_{\cZ'} + (Z_0')_{\red})$ is also log canonical by \cite[Proposition 5.20]{KM98}. By adjunction we know that $(Z_0,\Gamma_0)/\bmu_l$ is slc. This implies that the base $(E_{\red}', \Gamma_{E_{\red}'})$ is slc where $K_{E_{\red}'}+\Gamma_{E_{\red}'}=(K_{W_k'}+(\mu_k')_*^{-1}\Gamma+E'_{\red})|_{E_{\red}'}$. By inversion of adjunction, the pair $(W_k', (\mu_k')_*^{-1}\Gamma+E'_{\red})$ is log canonical. This proves part (2').

So far we have proven (1'), (2'), and (3') for $\mu_k':W_k'\to Z$. In order to construct $\mu_k:W_k\to Z$, we will show that $E_{i',k}$ is an lc place of $(W_k',(\mu_k')_*^{-1}\Gamma + \sum_{i=1}^{d'} E_{i,k}')$ for any $d'<i'\leq d$. By \eqref{eq:W_k'}, we know
\begin{align*}
    A_{W_k',(\mu_k')_*^{-1}\Gamma + \sum_{i=1}^{d'} E_{i,k}'}(w_{i',k})& =A_{Z,\Gamma}(w_{i',k})-w_{i',k}\big(\sum_{i=1}^{d'}\frac{kE_{i,k}'}{m\gcd(k,c_i)}\big)\\
    & = \frac{1}{m}\left(1-kw_{i',k}(\cO_{W_k'}(-1))\right).\label{eq:weaklysp-lc-blowup}\numberthis
\end{align*}
Indeed, since $(\cX,\Delta_{\tc};\cL)$ is the pull-back test configuration of $(\cX',\Delta_{\tc}';\cL')$, by \cite[Lemma 2.13]{BHJ17} they define the same filtration, i.e.
\[
\cF^{p}R_j= \{s\in R_j\mid v_i(s)\geq p+mA_{X,\Delta}(v_i)j\quad\textrm{ for any } 1\leq i\leq d\}.
\]
Similar to the arguments above, we have $I_p=\cap_{i=1}^d\fa_{p/k}(w_{i,k})$. Hence $w_{i,k}(\cO_{W_k'}(-1))= \frac{1}{k}$ for any $1\leq i\leq d$. This together with \eqref{eq:weaklysp-lc-blowup} implies $w_{i',k}$ is an lc place of $(W_k',(\mu_k')_*^{-1}\Gamma + \sum_{i=1}^{d'} E_{i,k}')$ for any $d'<i'\leq d$. It is clear that all non-klt centers of $(W_k',(\mu_k')_*^{-1}\Gamma + \sum_{i=1}^{d'} E_{i,k}')$ are contained in $\cup_{i=1}^{d'} E_{i,k}'$, thus $W_k'$ is of Fano type over $Z$. 
Then \cite{BCHM10} implies that there exists a projective birational morphism $\rho_k:W_k\to W_k'$ from a normal variety $W_k$ such that $\Exc(\rho_k)=\cup_{d'<i'\leq d} E_{i',k}$. Moreover, we know that $K_{W_k}+(\mu_k)_*^{-1}\Gamma + \sum_{i=1}^d E_{i,k}$ is the log pull-back of $K_{W_k'}+(\mu_k')_*^{-1}\Gamma + \sum_{i=1}^{d'} E_{i,k}'$ since $\rho_k$ only extracts lc places of the latter. By taking $\mu_k:=\mu_k'\circ\rho_k$, it is easy to see that (1), (2), and (3) are all satisfied. Thus the proof is finished.
\end{proof}

\begin{prop}\label{p-local}
There exists a positive integer $N_1=N_1(n,I)$ such that the following holds: for any weakly special collection of $\Z$-valued divisorial valuations $\{v_1,\cdots,v_d\}$ over $(X,\Delta)$ where $\dim(X)=n$ and coefficients of $\Delta$ belongs to $I$, and any $k\gg 1$, there exists a local $N_1$-complement $\Gamma_k^+$ of $o\in (Z,\Gamma)$ such that $E_{i,k}$ is an lc place of $(Z,\Gamma_k^+)$ for any $1\leq i\leq d$.
\end{prop}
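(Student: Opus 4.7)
The plan is to apply the local analog of Birkar's boundedness of complements \cite{Xu19} (cf.\ also \cite[Thm.~1.13]{HLS19}) to the birational model $\mu_k\colon W_k\to Z$ furnished by Proposition \ref{p-extracting}, and push the resulting complement down to $Z$.

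For $k\gg 1$, let $\mu_k\colon W_k\to Z$ be as in Proposition \ref{p-extracting}, and set $\Theta_k:=(\mu_k)_*^{-1}\Gamma+\sum_{i=1}^d E_{i,k}$. By parts (2) and (3) of that proposition, $(W_k,\Theta_k)$ is lc and $-(K_{W_k}+\Theta_k)$ is $\mu_k$-semiample. Choosing a general member $D$ of a sufficiently divisible relatively base point free subsystem of $|{-(K_{W_k}+\Theta_k)}|_{\mu_k,\Q}$ via Bertini, the pair $(W_k,\Theta_k+D)$ is lc near $\mu_k^{-1}(o)$ and satisfies $K_{W_k}+\Theta_k+D\sim_{\mu_k,\Q}0$; thus $(W_k,\Theta_k)$ admits a local $\Q$-complement over $o$, and its pushforward is a local $\Q$-complement of $(Z,\Gamma)$ at $o$ with every $E_{i,k}$ as an lc place.

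Next, apply the local boundedness of complements to $(W_k,\Theta_k)$ over $o$. Its coefficients lie in the fixed finite set $I\cup\{1\}$ and $\dim W_k=n+1$, so there is a constant $N_1=N_1(n,I)$, independent of $k$, together with an $N_1$-complement $\Theta_k^+\geq\Theta_k$: the pair $(W_k,\Theta_k^+)$ is lc near $\mu_k^{-1}(o)$ and $N_1(K_{W_k}+\Theta_k^+)\sim 0$ over a neighborhood of $o$. Set $\Gamma_k^+:=(\mu_k)_*\Theta_k^+$. The negativity lemma yields $K_{W_k}+\Theta_k^+=\mu_k^*(K_Z+\Gamma_k^+)$ near $o$, so $\Gamma_k^+\geq\Gamma$ is a local $N_1$-complement of $o\in(Z,\Gamma)$. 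Since each $E_{i,k}$ appears in $\Theta_k^+$ with coefficient $\geq 1$ (as $\Theta_k^+\geq\Theta_k$) and $\leq 1$ (by lc-ness), its coefficient is exactly $1$, so $A_{Z,\Gamma_k^+}(E_{i,k})=0$ for all $i$.

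The main technical input is the local boundedness of complements producing $N_1$ that depends only on $\dim Z=n+1$ and $I\cup\{1\}$ (hence only on $n$ and $I$), rather than on $(Z,\Gamma)$ or $k$; this is precisely the local analog of \cite[Thm.~1.7]{Bir16a} developed in \cite{Xu19} via a cone-based local-to-global reduction. Once that input is available, the rest of the argument is formal manipulation with the relative birational geometry over $\mu_k$.
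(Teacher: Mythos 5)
Your proof is correct and follows essentially the same route as the paper: apply the boundedness of relative complements (the paper cites \cite[Theorem 1.8]{Bir16a} directly, where you cite its local variants) to the model $\mu_k\colon W_k\to Z$ from Proposition \ref{p-extracting} over the point $o$, then push the resulting $N_1$-complement down to $Z$. The extra details you supply (the Bertini construction of a $\Q$-complement and the coefficient-equals-one argument identifying the $E_{i,k}$ as lc places) are correct and merely make explicit what the paper leaves implicit.
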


\begin{proof}
By applying the boundedness of relative complements \cite[Theorem 1.8]{Bir16a} to the morphism $\mu_k:W_k\to Z$ constructed in Proposition \ref{p-extracting}, there exists an $N_1$-complement $\Theta_k$ of $(W_k, (\mu_k)_*^{-1} \Gamma +\sum_{i=1}^d E_{i,k})$ over $o\in Z$ where $N_1$ only depends on the dimension $n$ and the coefficient set $I$. Then $\Gamma_k^+: = (\mu_k)_*(\Theta_k)$ is a local $N_1$-complement of $o\in (Z,\Gamma)$ such that $E_{i,k}$ is an lc place for any $1\leq i\leq d$. 
%\footnote{
%\HB{Maybe, we could add slightly more details (i.e. ``applying Bir19 to get a  local complement $\Theta_k$ of $(W_k, (\mu_k)_*^{-1} \Gamma +\sum_{i=1}^d E_{i,k})$. Then, $\Gamma_k^+: = (\mu_k)_*(\Theta_k)$, is a local complement... ). Also  we haven't defined the term local  complement in the preliminaries. We could add it or just write ``see Bir19'' (not sure if this matters). 
%}}
\end{proof}

\begin{proof}[Proof of Theorem \ref{thm:weaklyspecial}]
Let $\Gamma_k^+$ be the $N_1$-complement as in Proposition \ref{p-local}. Then we know that 
\[
A_{Z,\Gamma_k^+}(w_{i,k})=\frac{\gcd(k,c_i)}{k}A_{Z,\Gamma_k^+}(\ord_{E_{i,k}})=0.
\]
Let $r$ be the Gorenstein index of $o\in (Z,\Gamma)$. Then 
\[
rN_1(\Gamma_k^+-\Gamma)\sim rN_1(K_Z+\Gamma_k^+)-rN_1(K_Z+\Gamma)\sim 0.
\]
Thus we have $\Gamma_k^+=\Gamma+\frac{1}{rN_1}\mathrm{div}(f_k)$ where $f_k\in\cO_{o,Z}$. It is then clear that 
\[
A_{Z,\Gamma}(w_{i,k})=\frac{w_{i,k}(f_k)}{rN_1}.
\]
By definition we know that $w_{i,k}\geq (1-\frac{mA_{X,\Delta}(v_i)}{k})w_0$. On the other hand, for any $f\in R_j$ it is clear that 
\[
w_{i,k}(f)=\left(1-\frac{mA_{X,\Delta}(v_i)}{k}\right)j+\frac{c_i}{k}v_i(f)
\leq j+\frac{c_i m T(v_i)}{k} j= \left(1+\frac{c_i m T(v_i)}{k}\right) w_0(f).
\]
Hence there exists a sequence of positive numbers $\epsilon_k\to 0$ as $k\to\infty$ such that 
$(1-\epsilon_k) w_0\leq w_{i,k}\leq (1+\epsilon_k)w_0$ for any $1\leq i\leq d$. This implies
\[
A_{Z,\Gamma}(w_0)=\lim_{k\to\infty} A_{Z,\Gamma}(w_{i,k})\leq \liminf_{k\to\infty}\frac{(1+\epsilon_k)w_0(f_k)}{rN_1}=\liminf_{k\to\infty}\frac{w_0(f_k)}{rN_1}.
\]
However, since $(Z,\Gamma_k^+)$ is lc, we always have $A_{Z,\Gamma}(w_0)\geq \frac{w_0(f_k)}{rN_1}$ for $k\gg 1$. Then $w_0(f_k)$ being an integer implies that
\[
A_{Z,\Gamma}(w_0)=\frac{w_0(f_k)}{rN_1} \textrm{ for }k\gg 1.
\]
Therefore, $w_0$ is also an lc place of $(Z,\Gamma_k^+)$ for $k\gg 1$. Denote by $\Gamma_k':=\Gamma+\frac{1}{rN_1}\mathrm{div}(\bin(f_k))$ where $\bin(f_k)$ is the initial degeneration of $f_k$.
% \footnote{\HB{Not sure if this really matters: Do we want $f_k$ to be an element of $R$ not the localization $\cO_{Z,o}$? (It might be more clear what is meant by initial term, if it is an element of $\oplus_j R_j$.)}}
Then by \cite[Theorem 3.1]{dFEM10} we know that $(Z,\Gamma_k')$ is also lc.
% \footnote{\HB{Is this the argument used in Xu19 (originally appearing in Kol08) where one blows up the cone point and applies inversion of adjunction on the exceptional divisor? 
% Maybe, we could add further explanation or say (see the proof of [Kol08, Thm. 32] or the argument written in Xu19)
% }}
Furthermore, by lower semicontinuity of the log discrepancy function, we know that both $w_0$ and $w_{i,k}$ are still lc places of $(Z,\Gamma_k')$ for $k\gg 1$. Hence by taking a $\bG_m$-equivariant resolution, we see that 
\[
A_{Z,\Gamma_k'}(w_{i,k})=\left(1-\frac{mA_{X,\Delta}(v_i)}{k}\right)A_{Z,\Gamma_k'}(w_0)+\frac{c_i}{k}A_{Z,\Gamma_k'}(\ord_{E_{i,\infty}})
\]
which implies that $E_{i,\infty}$ is an lc place of $(Z,\Gamma_k')$ as well. Since $\Gamma_k'$ is $\bG_m$-invariant, it is the cone of some $\bQ$-divisor $\Delta_k'$ on $X$. Hence, we know that $E_i$ is an lc place of $\Delta_k'$ which is a $\bQ$-complement of $(X,\Delta)$. Then by an easy generalization of Theorem \ref{t-globcomp} to the case with multiple divisors over $X$,
%\footnote{\HB{Since \ref{t-globcomp} only covers the case when $d=1$, we could write ``an easy generalization of Theorem \ref{t-globcomp} to the case with multiple divisors over $X$"}} Theorem \ref{t-globcomp},
we may replace $\Delta_k'$ by an $N$-complement $\Delta_k^+$ whose lc places still contain $E_i$ for any $k\gg 1$ and any $1\leq i\leq d$. This finishes proving the ``only if'' part. 
The ``if'' part follows from Proposition \ref{prop:comp-weaklysp}.
\end{proof}

\begin{prop}\label{prop:comp-weaklysp}
Let $(X,\Delta)$ be a log Fano pair. Let $\{v_1,\cdots,v_d\}$ be a set of $\Z$-valued divisorial valuations in $\Val_X$. If $\{v_i\}_{i=1}^d$ is contained in the set of lc places of some $\bQ$-complement $\Delta^+$, then it is a weakly special collection.
\end{prop}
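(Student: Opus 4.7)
The idea is to run the argument of Proposition \ref{p-extracting} in reverse, passing to the cone, producing a $\bG_m$-equivariant model that extracts divisors encoding the $v_i$, and then descending to a test configuration.

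Concretely, write $v_i = c_i\,\ord_{E_i}$ with $c_i\in\bZ_{\geq 0}$. First I would set up the cone: fix $m$ with $L:=-m(K_X+\Delta)$ Cartier and let $Z:=C(X,L)$, with $\Gamma$ and $\Gamma^+$ the Zariski closures on $Z$ of the pullbacks of $\Delta$ and $\Delta^+$. A standard discrepancy computation on the blow-up $\tilde Z\to Z$ of the vertex (total space of $\cO_X(-L)$) shows that since $K_X+\Delta^+\sim_\bQ 0$, one has $K_Z+\Gamma^+\sim_\bQ 0$, the pair $(Z,\Gamma^+)$ is lc, and $A_{Z,\Gamma^+}(w_0)=0$. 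For each $k$ large, consider the quasi-monomial valuation
\[
w_{i,k}=\Big(1-\tfrac{mA_{X,\Delta}(v_i)}{k}\Big)w_0+\tfrac{c_i}{k}\ord_{E_{i,\infty}},
\]
as in the paragraph before Proposition \ref{p-extracting}; this becomes divisorial for $k$ large and its center is the vertex $o$. Using $A_{Z,\Gamma^+}(E_{i,\infty})=A_{X,\Delta^+}(E_i)=0$ together with $A_{Z,\Gamma^+}(w_0)=0$, the quasi-monomial formula for log discrepancies gives $A_{Z,\Gamma^+}(w_{i,k})=0$, so every $w_{i,k}$ is an lc place of the lc pair $(Z,\Gamma^+)$.

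Next I would construct a birational extraction. Since $(Z,\Gamma)$ is klt and of Fano type over $Z$ itself, \cite{BCHM10} yields a proper birational morphism $\mu_k:W_k\to Z$, $\bG_m$-equivariant and an isomorphism off $o$, whose exceptional set is exactly $\bigcup_i E_{i,k}$ (with $E_{i,k}$ the divisor on $W_k$ giving the valuation $\tfrac{k}{\gcd(k,c_i)} w_{i,k}$). Inversion of adjunction combined with the fact that each $w_{i,k}$ is an lc place of $(Z,\Gamma^+)$ shows that $(W_k,(\mu_k)_*^{-1}\Gamma+\sum_i E_{i,k})$ is log canonical, while a relative MMP over $Z$ (run on $-(K_{W_k}+(\mu_k)_*^{-1}\Gamma+\sum_i E_{i,k})$, which is $\mu_k$-big since $\mu_k$ is small-birational outside $o$ and the extracted divisors are $\mu_k$-antiample up to $\bQ$-linear equivalence by the cone computation) produces a model on which this divisor is $\mu_k$-semiample. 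I would still denote this final model $\mu_k:W_k\to Z$ for simplicity.

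Finally I would descend to a test configuration via $\bG_m$-equivariance. Every object above is $\bG_m$-invariant, so under the standard dictionary between $\bG_m$-equivariant proper birational models of $(Z,\Gamma)$ that extract divisors over $o$ with $-(K_W+(\mu_k)_*^{-1}\Gamma+E)$ semiample over $Z$, and semiample test configurations of $(X,\Delta)$ (this is the correspondence implicit throughout Proposition \ref{p-extracting}, realized by taking the Proj of a $\bZ$-graded slice of $\cO_{W_k}$), the model $\mu_k:W_k\to Z$ produces a semiample test configuration $(\cX,\Delta_{\tc};\cL)$ with central fiber components in bijection with the $E_{i,k}$. The lc property of $(W_k,(\mu_k)_*^{-1}\Gamma+\sum E_{i,k})$ translates to the log canonicity of $(\cX,\Delta_{\tc}+\cX_0)$, so the test configuration is weakly special. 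Tracking the construction shows that the divisorial valuation on $X$ attached to $\cX_0^{(i)}$ (in the sense of \cite[Def.~4.4]{BHJ17}) is precisely $v_i$, since $w_{i,k}|_{K(X)}$ recovers $v_i$ up to the canonical identification induced by $w_0$.

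The main obstacle is Step~2: simultaneously extracting the prescribed divisors $E_{i,k}$ and then running the relative MMP so that $-(K_{W_k}+(\mu_k)_*^{-1}\Gamma+\sum E_{i,k})$ becomes $\mu_k$-semiample. Once this model is in place, the passage to a weakly special test configuration via $\bG_m$-equivariant ample model is essentially mechanical given the cone construction already developed in the proof of Proposition \ref{p-extracting}.
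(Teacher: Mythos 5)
Your overall strategy is the same as the paper's: pass to the cone $Z=C(X,L)$, observe that the $w_{i,k}$ are lc places of $(Z,\Gamma^+)$, extract them by \cite{BCHM10}, run a relative MMP, and descend to a test configuration. The setup and the log discrepancy computations in your first two steps are correct. However, there is a genuine gap at the MMP step, and it is exactly the point you dismiss as ``essentially mechanical.'' The relative $-(K_{W_k}+(\mu_k)_*^{-1}\Gamma+\sum_i E_{i,k})$-MMP over $Z$ terminates at the relative log canonical (ample) model, and in general its divisorial contractions \emph{contract some of the $E_{i,k}$}. (This is not a pathology: in the forward direction, Proposition \ref{p-extracting} starts from a semiample test configuration whose ample model genuinely contracts the components $\cX_0^{(i)}$ for $d'<i\leq d$, so the reverse construction must reproduce that situation.) If you ``still denote the final model $\mu_k:W_k\to Z$'' and take its associated test configuration, its central fiber has only $d'\leq d$ irreducible components, so you have only shown that a \emph{subset} of $\{v_1,\dots,v_d\}$ is realized by the components of $\cX_0$ --- which does not verify Definition \ref{d-weaklyspecial}, since a weakly special collection must consist of \emph{all} the valuations $v_{\cX_0^{(i)}}$.

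The paper closes this gap with two non-mechanical steps that are absent from your proposal. First, one must show that the contracted valuations $w_{i',k}$ ($d'<i'\leq d$) are still lc places of the pair $(W_k',(\mu_k')_*^{-1}\Gamma+\sum_{i=1}^{d'}E_{i,k}')$ on the ample model; this uses the identity $I_p=\cap_{i=1}^{d'}\fa_{p/k}(w_{i,k})=\cap_{i=1}^{d}\fa_{p/k}(w_{i,k})$ coming from the fact that $W_k'$ is the log canonical model of $\widetilde{W}_k$, together with the computation $w_{i',k}(\cO_{W_k'}(-1))=\tfrac{1}{k}$. Second, after descending $W_k'$ to the ample test configuration $(\cX',\Delta_{\tc}')$, one must show that the divisors $F_{i'}$ over $X\times\bA^1$ (quasi-monomial in $X\times\{0\}$ and $E_{i'}\times\bA^1$ with weights $(1,c_{i'})$) are lc places of $(\cX',\cX_0'+\Delta_{\tc}')$ --- via the comparison $\ord_{F_{i'}}(\pi_1^*\cL'-\pi_2^*\cL_{\bA^1})=-mA_{X,\Delta}(v_{i'})$ and \cite[Lemmas 2.13 and 5.17]{BHJ17} --- and then extract them by \cite{BCHM10} to obtain the weakly special \emph{semiample} test configuration whose central fiber has all $d$ components. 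Without these steps your argument only produces a weakly special collection containing $\{v_1,\dots,v_{d'}\}$, not the full prescribed set.
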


\begin{proof}
%\CX{I think this simpler argument works, but some more words should be added.}
Let $v_i=c_i\cdot \ord_{E_i}$. Then similar as before, we have the the affine cone $o\in (Z,\Gamma)$ over $(X,\Delta)$. For any $k\gg 1$ we have divisorial valuation $w_{i,k}$ and prime divisor $E_{i,k}$ over $Z$ such that $w_{i,k}=\frac{\gcd(k,c_i)}{k}\cdot \ord_{E_{i,k}}$.
Denote by $\Gamma^+$ the Zariski closure of the pull-back of $\Delta^+$ under the projection $Z\setminus\{o\}\to X$.  Then it is clear that $w_{i,k}$ is an lc place of $(Z,\Gamma^+)$ for any $1\leq i\leq d$ and any $k\gg 1$. Hence by \cite{BCHM10}, there exists a $\bG_m$-equivariant projective birational morphism $\tilde{\mu}_k:\widetilde{W}_k\to Z$ from a normal  $\bQ$-factorial variety $\widetilde{W}_k$ such that the following properties hold.
\begin{itemize}
    \item %$\tilde{\mu}_k$ is an isomorphism over $Z\setminus\{o\}$ and 
    The exceptional divisors of $\tilde{\mu}_k$ is $\cup_{i=1}^d \widetilde{E}_{i,k}$ where $\widetilde{E}_{i,k}$ is the birational transform of $E_{i,k}$;
    \item $\widetilde{W}_k$ is of Fano type over $Z$; 
    %\item Each $\widetilde{E}_{i,k}$ is $\bQ$-Cartier on $\widetilde{W}_k$;
    \item $(\widetilde{W}_k, (\tilde{\mu}_k)_*^{-1}\Gamma^+ +\sum_{i=1}^d \widetilde{E}_{i,k})$ is a log canonical crepant model of $(Z,\Gamma^+)$.
\end{itemize}
%It is clear that 
%\begin{equation}\label{eq:weaklysp-equiv}
%-(K_{\widetilde{W}_k}+(\tilde{\mu}_k)_*^{-1}\Gamma+\sum_{i=1}^d \widetilde{E}_{i,k}))\sim_{\tilde{\mu}_k,\bQ} (\tilde{\mu}_k)_*^{-1}(\Gamma^+ -\Gamma)\sim_{\tilde{\mu}_k,\bQ} -\sum_{i=1}^d \ord_{E_{i,k}}(\Gamma^+-\Gamma) \widetilde{E}_{i,k}.
%\end{equation}
%Denote the right-hand side of \eqref{eq:weaklysp-equiv} by $-G$. Then $G$ is an effective $\bQ$-Cartier $\bQ$-divisor fully supported on $\cup_{i=1}^{d}E_{i,k}$ since $(Z,\Gamma)$ is klt. 

%For $0<\epsilon\ll 1$, consider the divisor 
%\[
%\widetilde{P}_{\epsilon}:= (\tilde{\mu}_k)_*^{-1}(\epsilon\Gamma+ (1-\epsilon)\Gamma^+)+\sum_{i=1}^d \widetilde{E}_{i,k} - 2\epsilon G. 
%\]
%Since $(Z,\Gamma)$ is klt, all lc centers of $(\widetilde{W}_k, (\tilde{\mu}_k)_*^{-1} \Gamma^+ +\sum_{i=1}^d \widetilde{E}_{i,k})$ are contained in $\Supp(\Gamma^+-\Gamma)\cup \Exc(\tilde{\mu}_k)$. Thus $(\widetilde{W}_k, \widetilde{P}_\epsilon)$ is a klt pair. 
By \cite{BCHM10}, we could run the $\bG_m$-equivariant $-(K_{\widetilde{W}_k}+(\tilde{\mu}_k)_*^{-1}\Gamma +\sum_{i=1}^d \widetilde{E}_{i,k})$-MMP over $Z$ and
 this MMP yields a birational contraction ${\rho}_k:\widetilde{W}_k\dashrightarrow W'_k$ where $W'_k$ is the log canonical model. For simplicity let us assume that $\rho_k: \widetilde{W}_k\dasharrow W_k'$ precisely contracts $\widetilde{E}_{i',k}$ for $d'<i'\leq d$. Denote by $E_{i,k}':=(\rho_k)_*\widetilde{E}_{i,k}$ for $1\leq i\leq d'$.

Next we will show that $\mu_k':W_k'\to Z$ satisfies (1'), (2'), and (3') in the proof of Proposition \ref{p-extracting}. Since $\tilde{\mu}_k$ is isomorphic in codimension $1$ over $Z\setminus \{o\}$ as $c_{Z}(\widetilde{E}_{i,k})=o$, so is $\mu_k'$. Since $W_k'$ is the log canonical model, we have that $-(K_{W_k'}+(\mu_k')_*^{-1}\Gamma +\sum_{i=1}^{d'} E_{i,k}')$ is ample over $Z$, which implies that $\mu_k'$ is an isomorphism over $Z\setminus\{o\}$ as $-(K_{W_k'}+(\mu_k')_*^{-1}\Gamma +\sum_{i=1}^{d'} E_{i,k}')|_{W_k'\setminus \mu_k'^{-1}(o)}=\mu_k'^*(-(K_{Z}+\Gamma)|_{Z\setminus\{o\}})$. And $(W_k', (\mu_k')_*^{-1}\Gamma +\sum_{i=1}^{d'} E_{i,k}')$ is log canonical since there is a $\bQ$-complement. Thus (1'), (2'), and (3') in the proof of Proposition \ref{p-extracting} hold for $\mu_k'$ from the above arguments.

Next we construct the weakly special test configuration $(\cX',\Delta_{\tc}';\cL')$ by essentially reversing the argument in the proof of Proposition \ref{p-extracting}.
By the proof of Proposition \ref{p-extracting}, we know that 
\begin{equation}\label{eq:E'-normal-bundle}
-(K_{W_k'}+(\mu_k')_*^{-1}\Gamma + \sum_{i=1}^{d'} E_{i,k}')\sim_{\mu_k',\bQ} -\sum^{d'}_{i=1} \frac{k}{m\gcd(k,c_i)}E_{i,k}'
\end{equation}
is ample over $Z$. Hence by taking valuation ideals of the Rees valuations of $\mu_k'$, we know that $W_k'\cong\Proj_{Z}\oplus_{p=0}^\infty I_p$ where $I_p:=\cap_{i=1}^{d'}\fa_{p/k}(w_{i,k})$ is an ideal sequence on $Z$ cosupported at $o$. Since $W_k'$ is the log canonical model of  $-(K_{\widetilde{W}_k}+(\tilde{\mu}_k)_*^{-1}\Gamma +\sum_{i=1}^d \widetilde{E}_{i,k})\sim_{\tilde{\mu}_k, \bQ} - \sum_{i=1}^d \frac{k}{m\gcd(k,c_i)}\widetilde{E}_{i,k}$,
we also have that $I_p=\cap_{i=1}^d \fa_{p/k}(w_{i,k})$. Hence the proof of Proposition \ref{p-extracting} implies that $E_{i',k}$ is an lc place of $(W_k', (\mu_k')_*^{-1}\Gamma + \sum_{i=1}^{d'} E_{i,k}')$ for any $d'<i'\leq d$. Consider the $\Z$-filtration $\cF^\bullet R$ of $R$ defined as $\cF^p R_j:= I_{p+jk}\cap R_j$. Then by the proof of Proposition \ref{p-extracting} we have 
\begin{align*}
\cF^p R_j & = \{s\in R_j\mid v_i(s)\geq p+mA_{X,\Delta}(v_i)j\quad\textrm{ for any } 1\leq i\leq d\}\\ & = \{s\in R_j\mid v_i(s)\geq p+mA_{X,\Delta}(v_i)j\quad\textrm{ for any } 1\leq i\leq d'\}.\label{eq:weaklysp-filt}\numberthis
\end{align*}
Similar to the proof of Proposition \ref{p-extracting}, denote by $\cZ:=\Spec \oplus_{p\in\Z} t^{-p} I_p$ as the $\bG_m$-equivariant degeneration of $Z$ over $\bA^1$. 
Let  $\Gamma_{\cZ}$ be the effective $\bQ$-divisor on $\cZ$ as the Zariski closure of $\Gamma \times (\bA^1\setminus\{0\})$. Then $\cZ$ is normal by integral closedness of $I_p$.

%\textcolor{blue}{YL: Here I imitate the argument in Prop A.3.}

Let $E':=\sum_{i=1}^{d'} \frac{E_{i,k}'}{\gcd(k,c_i)}$ and $E'_{\red}:=\sum_{i=1}^{d'} E_{i,k}'$. Let $l$ be a sufficiently divisible positive integer such that $lE'$ is Cartier on $W_k'$. 
Let $(\cZ',\Gamma_{\cZ'}):=(\cZ,\Gamma_{\cZ})/\bmu_l$. From the proof of Proposition \ref{p-extracting}, we know that $Z_0/\bmu_l$ is isomorphic to the affine cone $C_a(E_{\red}', \cO_{E'_{\red}}(-lE'|_{E'_{\red}}))$. 
Since $(W_k', (\mu_k')_*^{-1}\Gamma + E_{\red}')$ is log canonical, by adjunction we know that $(E_{\red}',\Gamma_{E_{\red}'})$ is slc where $\Gamma_{E_{\red}'}$ is the corresponding different divisor. Moreover, by \eqref{eq:E'-normal-bundle} we have
\[
-(K_{E_{\red}'}+\Gamma_{E_{\red}'})\sim_{\bQ}-(K_{W_k'}+(\mu_k')_*^{-1}\Gamma + E'_{\red})|_{E_{\red}'}\sim_{\bQ} - \frac{k}{m}E'|_{E_{\red}'}
\]
which is ample. Thus $(E_{\red}',\Gamma_{E_{\red}'})$ is a slc log Fano pair which implies that the affine cone $(Z_0,\Gamma_0)/\bmu_l$ is also slc. In particular, we know $Z_0/\bmu_l=(Z_0')_{\red}$ as it is reduced. Since $Z_0/\bmu_l$ is reduced, we know that $Z_0$ is generically reduced, which implies that $Z_0$ is reduced as it is $S_1$ by \cite[Proposition 2.6(ii)]{BHJ17}. Since the quotient map $\sigma: \cZ\to \cZ'$ is \'etale away from central fibers, we have that $K_{\cZ}+\Gamma_{\cZ}+ Z_0 = \sigma^*(K_{\cZ'} +\Gamma_{\cZ'} + (Z_0')_{\red})$. Since $((Z_0')_{\red}, \Gamma_{\cZ'}|_{(Z_0')_{\red}})\cong (Z_0,\Gamma_0)/\bmu_l$ is slc, inversion of adjunction implies that $(\cZ', \Gamma_{\cZ'}+(Z_0')_{\red})$ is log canonical, which implies that $(\cZ, \Gamma_{\cZ}+Z_0)$ is log canonical  by \cite[Proposition 5.20]{KM98}. Thus $(\cZ,\Gamma_{\cZ},\xi;\eta)$ is a weakly special test configuration of $(Z, \Gamma, \xi)$ where $\xi$ (resp. $\eta$) is the vector field on $Z$ (resp. $\cZ$) induced by the grading of $j$ (resp. of $p$). By adjunction we know that $(Z_0, \Gamma_0)$ is slc.

% \textcolor{red}{Since $(W_k', (\mu_k')_*^{-1}\Gamma + \sum_{i=1}^{d'} E_{i,k}')$ is log canonical, by \cite[Lemma 2.21 (2)]{LWX18} we know that $(\cZ,\Gamma_{\cZ},\xi;\eta)$ is a weakly special test configuration of $(Z, \Gamma, \xi)$ where $\xi$ (resp. $\eta$) is the vector field on $Z$ (resp. $\cZ$) induced by the grading of $j$ (resp. of $p$).}
%Hence by the proof of 
% \footnote{\HB{ Should we add ``by the proof of Proposition...'', rather than the statement. (there are a few other places where this could be added)}}
%Proposition \ref{p-extracting} we obtain a weakly special 

Next, we consider the test configuration $(\cX',\Delta_{\tc}';\cL')$ of $(X,\Delta)$ by setting $\cX':=\Proj\bigoplus_{j=0}^\infty \bigoplus_{p=-\infty}^{\infty} t^{-p}\cF^p R_j$ and $\cL'=-m(K_{\cX'}+\Delta_{\tc}')$. Then by \eqref{eq:cone-tc-1} and \eqref{eq:cone-tc-2}  we know that $(Z_0, \Gamma_0)$ is isomorphic to the affine cone over $(\cX_0', \Delta_{\tc,0}'; \cL_0')$. Since $(Z_0, \Gamma_0)$ is slc, we know that $(\cX_0', \Delta_{\tc,0}'; \cL_0')$ is also slc, and hence $(\cX',\Delta_{\tc}';\cL')$ is weakly special by inversion of adjunction. 
Moreover, $v_i=v_{\cX_0'^{(i)}}$ for any $1\leq i\leq d'$ where $(\cX_0'^{(i)})_{1\leq i\leq d'}$ are all the irreducible components of $\cX_0'$.

Finally we construct the desired semiample test configuration $(\cX,\Delta_{\tc};\cL)$ by extracting certain divisors over $\cX'$. Let $F_i$ be the prime divisor over $X\times\bA^1$ as the quasi-monomial combination of $X\times\{0\}$ and $E_i\times\bA^1$ with weights $(1,c_i)$. Then it is clear that $\ord_{F_i}|_{K(X)}=v_i$. We claim that $F_{i'}$ is an lc place of $(\cX',\cX_0'+\Delta_{\tc}')$ for any $d'<i'\leq d$. 
Let $\cY$ be the total space of a test configuration of $(X;L)$ dominating $\cX'$ and $X\times\bA^1$ such that $F_{i'}$ is a divisor on $\cY$ for any $d'< i'\leq d$. Denote by $\pi_1:\cY\to \cX'$ and $\pi_2:\cY\to X\times\bA^1$ the projection morphisms. Set $\cD:=\pi_1^*\cL'-\pi_2^*\cL_{\bA^1}$ where $\cL_{\bA^1}:=-m(K_{X\times\bA^1}+\Delta\times\bA^1)$. By \cite[Lemmas 2.13 and 5.17]{BHJ17} and \eqref{eq:weaklysp-filt}, we have that $\ord_{F_{i'}}(\cD)=-mA_{X,\Delta}(v_{i'})$. On the other hand, from the definition of $\cD$ we see that 
\[
\ord_{F_{i'}}(\cD)=m\big(A_{\cX',\cX_0'+\Delta_{\tc}'}(F_{i'})-A_{X\times\bA^1,X\times\{0\}+\Delta\times\bA^1}(F_{i'})\big).
\]
Since $A_{X\times\bA^1,X\times\{0\}+\Delta\times\bA^1}(F_{i'})=A_{X,\Delta}(v_{i'})$, we know that $A_{\cX',\cX_0'+\Delta_{\tc}'}(F_{i'})=0$. Thus the claim is proved. By \cite{BCHM10}, we can extract the divisors $\{F_{i'}\}_{d'<i'\leq d}$ over $\cX'$ to obtain the desired weakly special semiample test configuration $(\cX,\Delta_{\tc};\cL)$. This finishes the proof.
\end{proof}

\begin{rem}
%\begin{enumerate}
   % \item 
   Applying to the case with a prime divisor, we see that a prime divisor $E$ over $(X,\Delta)$ is weakly special if and only if $\{\ord_E\}$ is a weakly special collection, which is the same as $E$ being an lc place of some $N$-complement. 
   % \item It is an interesting question to characterize which weakly special collection of divisorial valuations corresponds to a weakly special test configuration. Although Theorem \ref{thm:weaklyspecial} gives a necessary condition, an equivalent condition in terms of the geometry of these lc places is not clear to the authors.
%\end{enumerate}
\end{rem}

\subsection{Valuations computing the stability threshold}\label{ss-compute}

In this section, we show any valuation computing $\delta\leq 1$ is the lc place of a bounded complement.
The result may be viewed as a stronger version of Proposition \ref{p-minapprox}.
%\footnote{\HB{I edited this to stronger version of Proposition  \ref{p-minapprox} (it was previously Theorem \ref{t-quasimonomialless1}). }}

\begin{thm}\label{t-minimizer=lcplaceN}
Let $n$ be a natural number and $I\subseteq \Q$
a finite set. There exists a positive integer $N:=N(n,I)$ depending only on $n$ and $I$ satisfying the following:

Assume $(X,\Delta)$ is an $n$-dimensional log Fano pair
such that coefficients of $\Delta$ belong to $I$ and $\d(X,\Delta)\leq 1$.
If $v\in \Val_X^*$ computes $\d(X,\Delta)$, 
then $v$ is quasi-monomial and is an lc place an $N$-complement.
\end{thm}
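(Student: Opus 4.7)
The plan is to pass to the cone singularity over $(X,\Delta)$ and reduce Theorem~\ref{t-minimizer=lcplaceN} to the local statement in \cite{Xu19} that a minimizer of the normalized volume function at a klt singularity is quasi-monomial and an lc place of a bounded local complement. This is in the spirit of \cite{Li17, LX16, LWX18} and parallels the argument used in Theorem~\ref{thm:weaklyspecial}.

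First, fix a positive integer $m$ so that $L := -m(K_X+\Delta)$ is Cartier and form the affine cone $(Z,\Gamma) = (C(X,L),C(\Delta,L))$ with vertex $o$, so that $(Z,\Gamma)$ is klt at $o$. Denote by $w_0 = \ord_{X_0}$ the canonical divisorial valuation corresponding to the blowup of $o$, and by $v_\infty$ the pullback of the given minimizer $v \in \Val_X^*$ to $Z \setminus \{o\}$. For parameters $a, b \ge 0$ not both zero, consider the quasi-monomial valuations $\hat v_{a,b}$ on $Z$ centered at $o$ obtained as the $(a,b)$-combination of $w_0$ and $v_\infty$, exactly as in Proposition~\ref{p-extracting}. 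The set of such $\hat v_{a,b}$ forms a ray in the $\mathbb{G}_m$-invariant valuations on $(Z,\Gamma)$, and their $A$- and $\hvol$-values can be written explicitly in terms of $A_{X,\Delta}(v)$ and $S_{X,\Delta}(v)$.

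Second, apply the dictionary between the stability threshold of $(X,\Delta)$ and the normalized volume of $o \in (Z,\Gamma)$ (from \cite{Li17, LX16, LWX18}, with the $\delta=1$ refinement in \cite{BX18}): when $\delta(X,\Delta) \le 1$ and $v$ computes $\delta(X,\Delta)$, one can choose $(a,b)$ so that $\hat v_{a,b}$ \emph{minimizes} $\hvol_{(Z,\Gamma),o}$. The case $\delta(X,\Delta) < 1$ follows the argument in \cite{LX16, LWX18}; the boundary case $\delta(X,\Delta)=1$ requires a separate check, either by combining K-semistability of the cone pair with the Fujita--Li inequality or by a limiting argument using Proposition~\ref{p-minapprox}. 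Then, by the main theorem of \cite{Xu19}, any such minimizer is quasi-monomial, which forces $v$ itself to be quasi-monomial on $X$.

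Third, still by \cite{Xu19}, the minimizer $\hat v_{a,b}$ is an lc place of a local $N_0$-complement of $o \in (Z,\Gamma)$, where $N_0$ depends only on $n$ and $I$ (through the boundedness of local complements, which is the local counterpart of Theorem~\ref{t-complement}). To descend this to a global complement of $(X,\Delta)$, replace the local complement by its $\mathbb{G}_m$-invariant initial degeneration, which remains lc by \cite[Theorem~3.1]{dFEM10} exactly as in the proof of Theorem~\ref{thm:weaklyspecial}; the resulting $\mathbb{G}_m$-invariant divisor is the cone over a $\mathbb{Q}$-complement $\Delta^+$ of $(X,\Delta)$ having $v$ as an lc place. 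Finally, apply Theorem~\ref{t-globcomp} (in the slightly generalized form allowing a quasi-monomial lc place, obtained by extracting finitely many divisorial valuations whose quasi-monomial combination is $v$ as in Section~\ref{ss-qm}) to upgrade $\Delta^+$ to an $N$-complement for some $N = N(n,I)$.

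The main obstacle is the second step in the case $\delta(X,\Delta)=1$: one must identify the correct minimizer on the cone when the canonical valuation $w_0$ is \emph{itself} a candidate minimizer, and rule out degenerate behavior of the ray $\hat v_{a,b}$. This is where the local-to-global passage from \cite{Xu19} is essential and why a purely global argument (as in Proposition~\ref{p-minapprox}) is insufficient. The remaining steps are comparatively routine once the cone-theoretic framework and \cite{Xu19} are in place.
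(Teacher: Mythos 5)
Your overall architecture (cone construction, input from \cite{Xu19}, initial degeneration, boundedness of complements, descent to a global $N$-complement) matches the paper's, but the bridge you build between the $\delta$-minimizer and the cone singularity is wrong, and it is exactly the step the whole argument hinges on.

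You claim that for suitable $(a,b)$ the quasi-monomial combination $\hat v_{a,b}$ of $w_0$ and the pullback of $v$ \emph{minimizes the normalized volume} of $o\in(Z,\Gamma)$. This is not what the dictionary of \cite{Li17,LX16,BX18} gives, and in the boundary case $\delta(X,\Delta)=1$ it is false: there the cone vertex is a K-semistable cone singularity, so $w_0$ itself minimizes $\hvol$, and (by uniqueness of the minimizer up to scaling) no combination with $b>0$ can be a minimizer --- so your chosen valuation degenerates to $w_0$ and retains no information about $v$. When $\delta<1$ the $\hvol$-minimizer of the cone is some other valuation with no identified relation to the ray you construct. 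The correct statement, which the paper proves as Lemma \ref{l-lctAvt}, is much weaker and of a different nature: each $v_t$ computes the log canonical threshold of its \emph{own} graded sequence of valuation ideals, $\lct(Z,\Gamma;\fa_\bullet(v_t))=A_{Z,\Gamma}(v_t)$. This is established not via normalized volumes but via the observation that a valuation computing $\delta\le 1$ admits, for every $\varepsilon>0$, a $\Q$-complement $\Delta^+$ with $A_{X,\Delta^+}(v)<\varepsilon$ (using basis-type divisors with $S_m(v)=v(B_m)$). It is this lct-computing property, combined with uniqueness of lct-computing valuations and the weak lc model machinery of \cite{LX17} plus \cite[Thm.~1.8]{Bir16a}, that produces the bounded local complement with $v_{1/k}$ as an lc place. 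Note also that the quasi-monomiality of $v$ does not need the cone at all: it follows directly from \cite[Prop.~4.8]{BJ17} applied on $X$ together with \cite{Xu19}.

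A secondary gap: in your last step you need all the divisorial valuations appearing in the quasi-monomial description of $v$ (or in a sequence approximating $v$) to be lc places of a \emph{single} $N$-complement; Theorem \ref{t-globcomp} only gives one complement per divisor. The paper closes this by approximating $v$ by divisorial valuations $v_j$ inside $\QM(Y,(\Delta_Y^+)^{=1})$, invoking the finiteness statement of Lemma \ref{lem:linearseriesQMfinite} for the functions $\varphi_{D_j}$ with $D_j\in\frac{1}{rN}|-rN(K_X+\Delta)|$, and passing to a subsequence on which a single $N$-complement $\Delta^+_*$ works; then $A_{X,\Delta^+_*}(v)=0$ by taking the limit. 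Without such a pigeonhole argument your final upgrade to an $N$-complement does not go through.
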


\begin{proof}%[Proof of Theorem \ref{t-minimizer=lcplaceN}]
Let $(X,\Delta)$ be an $n$-dimensional log Fano pair
such that coefficients of $\Delta$ belong to $I$.
Assume $\d(X,\Delta)\leq 1$ and $v\in \Val_{X}^*$ computes the stability threshold. 
By \cite[Prop. 4.8]{BJ17},  $v$ is the unique valuation (up to scaling) computing  $\lct(X,\Delta; \fa_\bullet(v))$ \cite[Prop. 4.8]{BJ17}. Hence,  $v$ is quasi-monomial by \cite{Xu19}. 

To prove the second part of Theorem \ref{t-minimizer=lcplaceN}, we will again 
use the cone construction.
Fix a positive integer $r$ so that $L: =-r(K_X+\Delta)$ is a Cartier divisor
and set $R: = R(X,L)$. Let $Z=\Spec(R)$ denote the cone over $X$ with respect to the polarization $L$, 
 $o \in Z$  the vertex of the cone, and $\Gamma$ the $\Q$-divisor on $Z$ defined by pulling back $\Delta$.

For each $t\in \R_{\geq 0}$, we consider the valuation $v_t \in \Val_{Z}$
defined by 
\[
v_t( f ) = \min \{  tv(f_m)+ m \, \vert\, f_m \neq 0 \} ,
\]
where $f= \sum f_m$ and each $f_m \in R_m$.
%Observe that $v_t$ is  equivariant with respect to the $\mathbb{G}_m$-action on $Z$.
The valuation $v_t$ is quasi-monomial, since $v$ is quasi-monomial,
and satisfies $A_{Z,\Gamma}(v_t) =\tfrac{1}{r} + t A_{X,\Delta}(v)$ (see the proof of \cite[Lemma 6.14]{Li17}).

\begin{lem}\label{l-lctAvt}
For any $t\in \R_{>0}$, $\lct(Z,\Gamma; \fa_\bullet(v_t)) =A_{Z,\Gamma}(v_t)$.
\end{lem}

\begin{proof}
Since the inequality $\lct(Z,\Gamma; \fa_\bullet(v_t)) \leq A_{Z,\Gamma}(v_t)$ always holds, it suffices to show the reverse inequality. 
Pick any $\varepsilon>0$. We will proceed to show 
$\lct(Z,\Gamma; \fa_\bullet(v_t))  \geq A_{Z,\Gamma}(v_t) -\varepsilon$. 
\medskip

\noindent \emph{Claim}:
For any $\varepsilon' >0$, there exists a $\Q$-complement $\Delta^+$ of $(X,\Delta)$ such that
$A_{X,\Delta^+}(v)<\varepsilon' $. 

To prove the claim, for each $m$ divisible by $r$ 
choose an $m$-basis type divisor $B_m$ such that 
$S_m(v) = v(B_m)$. 
If we set $c_m = \min \{1, \delta_m(X,\Delta)\} $, 
then $(X,\Delta+c_m B_m)$ is lc by the definition of 
$\delta_m$
and $$A_{X,\Delta+c_m B_m}(v)=
A_{X,\Delta}(v)- c_m v(B_m)=
A_{X,\Delta}(v)- c_m S_m(v)
.$$
Since $S_m(v) \to S(v)$ and $c_m \to \delta(X,\Delta)$ as $m\to \infty$, we see
$$
\lim_{m \to \infty }
A_{X,\Delta+c_m B_m}(v)
=
A_{X,\Delta}(v) - \delta(X,\Delta) S(v)
,$$
which is zero since $v$ computes $\delta(X,\Delta)$. 

Therefore, we may find $m$ so that $A_{X,\Delta+c_mB_m}(v) < \varepsilon'$. 
Since $-K_X-\Delta$ is ample, 
we may  choose $H \in  |-K_X-\Delta|_{\Q}$
so that $(X,\Delta+c_mB_m+(1-c_m)H))$ remains lc \cite[Lem. 5.17.2]{KM98}. Hence, 
$\Delta^+: = \Delta+ c_m B_m +(1-c_m) H$ 
is a $\Q$-complement of $(X,\Delta)$ and
satisfies $A_{X,\Delta^+}(v)
\leq A_{X,\Delta+c_m B_m}(v)<\varepsilon'$.
\medskip 

By the above claim, we may choose a $\Q$-complement $\Delta^+$ of $(X,\Delta)$ such that $A_{X,\Delta^+}(v) < \varepsilon/t$. 
Since $\Delta^+- \Delta \sim_{\Q} -K_X-\Delta$, there exists a positive integer $m$ and $f\in H^0(X,\cO_{X}(mL))$ such that $\Delta^+ = \Delta+ \tfrac{1}{mr} \{f=0 \}$. 

Since
$(X,\Delta^+)$ is lc and $K_{X}+\Delta^+\sim_{\Q}0$,
the pair
$(Z, \Gamma + \tfrac{1}{mr}\{ f=0 \})$ is  lc
\cite[Lem. 3.1]{Kol13}. 
Using that $A_{Z,\Gamma}(v_t) = \tfrac{1}{r} + t A_{X,\Delta}(v)$ and  $v_t(f) = m+tv(f)$, 
we see  
\[
 A_{Z,\Gamma+ \frac{1}{mr}\{ f=0 \}}(v_t)
= 
t \left( 
A_{X,\Delta}(v) - \tfrac{1}{mr} v(f)
\right)
= 
t A_{X,\Delta^+}(v)<\varepsilon .
\]
Hence, if we set
$s:=v_t(f)$, then 
\begin{equation}\label{eq-sestimate}
s = mr \big( A_{Z,\Gamma}(v_t) - A_{Z,\Gamma+ \frac{1}{mr} \{f=0\} } (v_t) \big) 
\geq  mr  ( A_{Z,\Gamma}(v_t) - \varepsilon )
.\end{equation}

To estimate the asymptotic lct, 
observe that 
$f^{\lceil p/s \rceil} \in \fa_p(v_t)$
for each positive integer $p$. 
Hence,
\[
\lct( Z,\Gamma; \fa_p(v_t)) 
\geq 
\lct( Z,\Gamma; (f^{\lceil p/s\rceil})  ) 
\geq
\frac{1}{\lceil p/s \rceil mr}
\]
where the last inequality uses that $(Z,\Gamma + \tfrac{1}{mr} \{f=0\} )$ is lc. 
Therefore, 
\[
\lct( Z,\Gamma; \fa_\bullet (v_t)) 
= 
\lim_{p \to \infty} 
p \cdot \lct(Z,\Gamma; \fa_p(v_t))
\geq 
\lim_{p \to \infty} \frac{p}{\lceil p/s \rceil mr}
= \frac{s}{mr}.
\]
After referring back to \eqref{eq-sestimate}, we see 
$\lct( Z,\Gamma; \fa_\bullet (v_t)) \geq A_{Z,\Gamma}(v_t)-\varepsilon$.
\end{proof}

%To proceed to show that $v$ is the lc place of an $N$-complement, we show that each valuation $v_{1/k}$ is the lc place of a bounded local complement.  
%\medskip 

\begin{lem} There exists a positive integer $M$ such that the following holds:
for each positive integer $k$,
there exists  $f^{(k)}\in R$ such that the pair 
$$(Z,\Gamma +\tfrac{1}{rM} \{f^{(k)}=0\})$$
is lc in a neighborhood of $o\in Z$
and $v_{1/k}$ is an lc place of the pair. \end{lem}

\begin{proof}
Fix a positive integer $k$. 
Choose a log resolution 
$W\to Z$ of $(Z,\Gamma)$ 
and local coordinates 
$y_1,\ldots, y_q$
at a point $\eta \in W$
such that $v_{1/k}$ may be written as $v_\alpha$
for some $\alpha = (\alpha_1,\cdots,\alpha_q) \in \R_{> 0}^q$ (see \eqref{eq:qmval} for the definition).
After replacing $W$ with a higher model and choosing new local coordinates, we may assume 
$\alpha_1,\ldots,\alpha_q$ are linearly independent over $\Q$.

Note that $v_{1/k}$ computes $\lct(Z,\Gamma;\fa_\bullet(v_{1/k}))$,
since 
$A_{Z,\Gamma}(v_{1/k}) = \lct ( Z,\Gamma; \fa_\bullet(v_{1/k}))$ by
Lemma \ref{l-lctAvt}
and the equality $v_{1/k}(\fa_\bullet(v_{1/k})) = 1$.
Hence, \cite[Rem. 2.52]{LX17} implies $v_{1/k}$ admits a weak lc model
in the sense of \cite[Def. 2.49]{LX17}. 
In particular, the argument in \textit{loc. cit.} implies there exists a proper birational morphism 
$\rho:W^{\rm wlc} \to Z $, prime divisors $S_1, \ldots, S_q$ on $W^{\rm wlc}$ with $\ord_{S_j} = v_{\beta^{(j)}}$ for some $\beta^{(j)}\in \Z_{\geq 0}^q$ such that 
\begin{enumerate}[label=(\roman*)]
    \item $(W^{\rm wlc} ,\rho_*^{-1} \Gamma +\sum S_i)$ is lc,
    \item $-K_{W^{\rm wlc} }
    - \rho_*^{-1} \Gamma - \sum S_i$ is $\rho$-nef, and
    \item  $\alpha$ lies in the convex cone generated by  $\beta^{(1)},\ldots, \beta^{(q)}$.
\end{enumerate} %\footnote{\HB{maybe this part of the argument could be simplified. I was trying to avoid discussing dual complexes associated to quotients of dlt pairs, which added some length }}

Applying \cite[Thm. 1.8]{Bir16a}, we may find a
positive integer $M$, dependent only on $\dim(Z)$ and the coefficients on $\Gamma$, and  an effective $\Q$-divisor $\Gamma_{W^{\rm wlc}}^+ \geq \rho^{-1}_* (\Gamma)+ \sum S_j$ 
such that, in a neighborhood of $\rho^{-1}(o)$, 
$(W^{\rm wlc}, \Gamma_{W^{\rm wlc}}^+)$ is lc  
and $M(K_{W^{\rm wlc}} +\Gamma_{W^{\rm wlc}}^+)\sim_Z0$.
Set $\Gamma^+ := \rho_* (\Gamma_{W^{\rm wlc}}^+)$.
Observe that
$$\Gamma^+ \geq \Gamma
\quad \text{ and } \quad
K_{W^{\rm wlc}}+ \Gamma_{W^{\rm wlc}}^+ = \rho^*( K_{Z}+\Gamma^+).$$
Hence,  in a neighborhood of $o \in Z$, $(Z,\Gamma^+)$ is lc and $M(K_{Z}+\Gamma^+)\sim 0$.  Additionally, each 
$S_j$ is an lc place of $\Gamma^+$. 

To see $v_{1/k}$ is an lc place of $(Z,\Gamma^+)$, note that
$$A_{Z,\Gamma^+}(w) = A_{Z,\Gamma}(w) - w(\Gamma^+-\Gamma)\quad \text{ for any } w\in \Val_{Z}.$$ 
Since $Y\to Z$ is a log resolution of $(Z,\Gamma)$, $A_{Z,\Gamma}$ is linear on our simplicial cone. 
Additionally, $w\mapsto -w(\Gamma^+-\Gamma)$ is convex on the cone by \cite[Lem. 1.10]{BFJ08}. 
Using that $\alpha$ lies in the convex cone generated by $\beta^{(1)},\ldots, \beta^{(q)}$
and $A_{Z,\Gamma^+}(v_{\beta ^{(j)}})=0$ 
for each $j=1,\ldots, q$,
we see
$A_{Z,\Gamma^+}(v_{1/k}) \leq 0$.
Since $(Z,\Gamma^+)$ is lc, we conclude 
$A_{Z,\Gamma^+}(v_{1/k}) =0$. 

Finally, note that $$
rM(\Gamma^+-\Gamma) 
= rM(K_Z+\Gamma^+) 
- rM(K_Z+\Gamma) \sim 0 
$$
at $o\in Z$.
Hence, we may find $f^{(k)}\in R$ such that $\Gamma^+-\Gamma$ agrees with $\frac{1}{Mr}\{f^{(k)}=0\}$
in a neighborhood of $o\in Z$, which completes the proof of the lemma. 
\end{proof}

For each positive integer $k$, consider the lc pair $(Z,\Gamma+ \tfrac{1}{Mr}\{f^{(k)} =0 \})$ constructed above.
Repeating the proof of Theorem \ref{thm:weaklyspecial}, we see that if $k\gg0$, then $\Delta^+: = \Delta+ \tfrac{1}{Mr} \{ \bin(f^{(k)}) =0 \})$ is a $\Q$-complement of $(X,\Delta)$ with $v$ an lc place.

To show $v$ is the lc place of an $N:=N(n,I)$ complement, let $\pi:Y\to X$ be a log resolution of $(X,\Delta^+)$ and write $\Delta^+_Y$ for the $\Q$-divisor satisfying $$K_{Y}+ \Delta_Y^+= \pi^*(K_X+\Delta^+).$$
By Lemma \ref{l-lcplacesQM}, the lc places of $(X,\Delta^+)$ coincide with the simplicial cone complex $\QM(Y,(\Delta_Y^+)^{=1})$. 

Choose a sequence of divisorial valuations
$(v_j)_j$ in $\QM(Y,(\Delta_Y^+)^{=1})$
converging to $v$. 
Since  each $v_j$ is divisorial and an lc place of  $(X,\Delta^+)$, 
Theorem \ref{t-globcomp} implies there exists a positive integer $N:=N(n,I)$,  depending only on $n$ and $I$,
such that each $v_j$ is the lc place of an $N$ complement. 
Hence, for each $j$, we may choose an $N$-complement  $\Delta_j^+$ with $A_{X,\Delta^+_j}(v_j)=0$.

Set $D_j := \Delta^+_j - \Delta$
and write $\varphi_{D_j} : \QM(Y,(\Delta_Y^+)^{=1})\to \R$ for the function defined by $v\mapsto v(D_j)$. 
Since each $D_j$ is an element of  $\frac{1}{rN}|-rN(K_X+\Delta)|$,
the set of functions
$\{ \varphi_{D_j} \, \vert\, j \in \N\}$ 
is finite by Lemma \ref{lem:linearseriesQMfinite}. 
Therefore, after replacing $(v_j)_j$ with a subsequence, 
we may find an individual  $N$-complement $\Delta^+_*$ such that $A_{X,\Delta_*^{+}}(v_{j})=0$ for all $j$.
Using that $v= \lim_j v_j$, we  conclude $A_{X,{\Delta^+_*}}(v)=0$.
\end{proof}

\begin{bibdiv}
\begin{biblist}%[\normalsize]

\bib{ABHLX19}{article}{
   AUTHOR={Alper, J.},
   AUTHOR={Blum, H.},
   AUTHOR={Halpern-Leistner, D.},
   AUTHOR={Xu, C.},
   title={Reductivity of the automorphism group of K-polystable Fano
   varieties},
   journal={Invent. Math.},
   volume={222},
   date={2020},
   number={3},
   pages={995--1032},
}

%   \bib{AHR20}{article}{
%       AUTHOR = {Alper, Jarod},
%       Author = {Hall, Jack},
%       Author = {Rydh, David},
%      TITLE = {A {L}una \'{e}tale slice theorem for algebraic stacks},
%   JOURNAL = {Ann. of Math. (2)},
%   FJOURNAL = {Annals of Mathematics. Second Series},
%     VOLUME = {191},
%       YEAR = {2020},
%     NUMBER = {3},
%      PAGES = {675--738},
%      }
   
   \bib{Amb16}{article}{
    AUTHOR = {Ambro, Florin},
     TITLE = {Variation of log canonical thresholds in linear systems},
   JOURNAL = {Int. Math. Res. Not. IMRN},
  FJOURNAL = {International Mathematics Research Notices. IMRN},
      YEAR = {2016},
    NUMBER = {14},
     PAGES = {4418--4448},
}

 \bib{Bir16a}{article}{
 AUTHOR = {Birkar, Caucher},
     TITLE = {Anti-pluricanonical systems on {F}ano varieties},
   JOURNAL = {Ann. of Math. (2)},
  FJOURNAL = {Annals of Mathematics. Second Series},
    VOLUME = {190},
      YEAR = {2019},
    NUMBER = {2},
     PAGES = {345--463},
}

 \bib{Bir16b}{article}{
 AUTHOR={Birkar, C.}, 
   title={Singularities of linear systems and boundedness of Fano varieties},
   journal={Ann. of Math. (2)},
   volume={193},
   date={2021},
   number={2},
   pages={347--405},
 }

\bib{BCHM10}{article}{
   author={Birkar, C.},
  author={Cascini, P.},
  author={Hacon, C.},
  author={McKernan, J.},
 title={Existence of minimal models for varieties of log general type},
  journal={J. Amer. Math. Soc.},
  volume={23},
   date={2010},
   number={2},
   pages={405--468},
%   issn={0894-0347},
%   review={\MR{2601039 (2011f:14023)}},
%   doi={10.1090/S0894-0347-09-00649-3},
}

%\bib{BHJ16}{article}{
 %   AUTHOR = {Boucksom, S.} 
  %  AUTHOR={Hisamoto, T.} 
  %  AUTHOR={ Jonsson, M.},
  %   TITLE = {Uniform {K}-stability, and asymptotics of energy functionals in K\"ahler geometry},
 %  JOURNAL = {arXiv:1603.01026, to appear in J. Euro. Math. Soci.},
   %   YEAR = {2016},
 %}
 
 \bib{BHLLX20}{article}{
     AUTHOR = {Blum, H.},
     AUTHOR = {Halpern-Leistner, D.},
    AUTHOR = {Liu, Y.},
    AUTHOR = {Xu, C.},
   title={On properness of K-moduli spaces and optimal degenerations of Fano
   varieties},
   journal={Selecta Math. (N.S.)},
   volume={27},
   date={2021},
   number={4},
   pages={Paper No. 73},
}

\bib{BJ17}{article}{
   author={Blum, H.},
   AUTHOR={Jonsson, M.}, 
     TITLE = {Thresholds, valuations, and K-stability},
   JOURNAL = {Adv. Math.},
  FJOURNAL = {Advances in Mathematics},
    VOLUME = {365},
      YEAR = {2020},
     PAGES = {107062},
   }

\bib{Blu18}{article}{
    AUTHOR = {Blum, H.},
    TITLE = {Singularities and K-stability},
   JOURNAL = {University of Michigan Ph.D. Thesis},
    YEAR = {2018},
 }

\bib{BL18}{article}{
   author={Blum, H.},
   AUTHOR={Liu, Y.},
  TITLE = {Openness of uniform K-stability in families of $\mathbb{Q}$-Fano varieties},
   JOURNAL = {arXiv:1808.09070, to appear in Ann. Sci. \'{E}c. Norm. Sup\'{e}r.},
      YEAR = {2018},
   }	
 
 \bib{BLZ19}{article}{
   author={Blum, H.},
  AUTHOR={Liu, Y.},
    AUTHOR={Zhou, C.},
   TITLE = {Optimal destabilization of K-unstable Fano varieties via stability thresholds},
   JOURNAL = {arXiv:1907.05399, to appear in Geom. Topol.},
     YEAR = {2019},
  }

\bib{BX18}{article}{
   AUTHOR = {Blum, H.},
   AUTHOR={Xu, C.},
     TITLE = {Uniqueness of K-polystable degenerations of
              {F}ano varieties},
   JOURNAL = {Ann. of Math. (2)},
  FJOURNAL = {Annals of Mathematics. Second Series},
    VOLUME = {190},
      YEAR = {2019},
    NUMBER = {2},
     PAGES = {609--656},
}

 \bib{BFJ08}{article}{
    AUTHOR = {Boucksom, S.},
    AUTHOR = {Favre, C.},
    AUTHOR = {Jonsson, M.},
     TITLE = {Valuations and plurisubharmonic singularities},
   JOURNAL = {Publ. Res. Inst. Math. Sci.},
  FJOURNAL = {Kyoto University. Research Institute for Mathematical
              Sciences. Publications},
    VOLUME = {44},
      YEAR = {2008},
    NUMBER = {2},
     PAGES = {449--494},
      ISSN = {0034-5318},
%   MRCLASS = {32U25 (13A18 14B05 32U05)},
%  MRNUMBER = {2426355},
%MRREVIEWER = {Romain Dujardin},
   %    DOI = {10.2977/prims/1210167334},
    %   URL = {https://doi.org/10.2977/prims/1210167334},
}

\bib{BdFFU15}{incollection}{
    AUTHOR = {Boucksom, S.},
    AUTHOR={de Fernex, T. },
    AUTHOR={ Favre, C.}, 
    AUTHOR={Urbinati, S.},
     TITLE = {Valuation spaces and multiplier ideals on singular varieties},
 BOOKTITLE = {Recent advances in algebraic geometry},
    SERIES = {London Math. Soc. Lecture Note Ser.},
    VOLUME = {417},
     PAGES = {29--51},
 PUBLISHER = {Cambridge Univ. Press, Cambridge},
      YEAR = {2015},
}

\bib{BHJ17}{article}{
    AUTHOR = {Boucksom, S.},
    AUTHOR={Hisamoto, T.}, 
    AUTHOR={ Jonsson, M.},
     TITLE = {Uniform {K}-stability, {D}uistermaat-{H}eckman measures and
              singularities of pairs},
   JOURNAL = {Ann. Inst. Fourier (Grenoble)},
  FJOURNAL = {Universit\'e de Grenoble. Annales de l'Institut Fourier},
    VOLUME = {67},
      YEAR = {2017},
     PAGES = {743--841},
 }

 \bib{BoJ18}{article}{
    AUTHOR = {Boucksom, S.}, 
    AUTHOR={ Jonsson, M.},
     TITLE = {A non-Archimedean approach to K-stability},
  JOURNAL = {arXiv:1805.11160},
      YEAR = {2018},
 }

% \bib{CDS}{article}{
%     AUTHOR = {Chen, X.},
%     AUTHOR={Donaldson, S.}, 
%     AUTHOR={Sun, S.},
%      TITLE = {K\"ahler-{E}instein metrics on {F}ano manifolds. {I}:
%               {A}pproximation of metrics with cone singularities, II: Limits with cone angle less than $2\pi$, III: Limits as cone angle approaches $2\pi$ and completion of the main proof.},
%   JOURNAL = {J. Amer. Math. Soc.},
%   FJOURNAL = {Journal of the American Mathematical Society},
%     VOLUME = {28},
%       YEAR = {2015},
%     NUMBER = {1},
%      PAGES = {183--197, 199--234, 235--278},
%  }

\bib{dFEM10}{article}{
   author={de Fernex, Tommaso},
   author={Ein, Lawrence},
   author={Musta\c{t}\u{a}, Mircea},
   title={Shokurov's ACC conjecture for log canonical thresholds on smooth
   varieties},
   journal={Duke Math. J.},
   volume={152},
   date={2010},
   number={1},
   pages={93--114},
%   issn={0012-7094},
%   review={\MR{2643057}},
%   doi={10.1215/00127094-2010-008},
}

% \bib{dFKX}{incollection}{
%   author={de Fernex, T.},
%      author={Koll\'ar, J.},
%       author={Xu, C.},
%   title={The dual complex of singularities},
%     journal={Inst. Hautes \'Etudes Sci. Publ. Math.},
%   number={40},
%   date={2012},
%  %  note={arXiv:1212.1675},
%   booktitle={Proceedings of the conference in honor of Yujiro Kawamata's 60th birthday, Advanced Studies in Pure Mathematics.},
%   series={Advanced Studies in Pure Mathematics},
%   pages={5--57},
   
% }

  \bib{Don02}{article}{
    AUTHOR = {Donaldson, S. K.},
     TITLE = {Scalar curvature and stability of toric varieties},
   JOURNAL = {J. Differential Geom.},
    VOLUME = {62},
      YEAR = {2002},
    NUMBER = {2},
     PAGES = {289--349},
    }

    \bib{Fuj19}{article}{
  author={Fujita, K.},
   title={A valuative criterion for uniform K-stability of $\mathbb{Q}$-Fano varieties},
   journal={ J. Reine Angew. Math.},
    VOLUME = {751},
     date={2019},
     pages={309-338}
}

\bib{Fuj17}{article}{
AUTHOR={Fujita, K.},
TITLE = {Uniform K-stability and plt blowups of log Fano pairs},
JOURNAL = {Kyoto J. Math.},
  VOLUME = {59},
 YEAR = {2019},
    NUMBER = {2},
     PAGES = {399-418},
}

\bib{FO18}{article}{
    AUTHOR = {Fujita, K.},
    AUTHOR = {Odaka, Y.},
     TITLE = {On the {K}-stability of {F}ano varieties and anticanonical
              divisors},
   JOURNAL = {Tohoku Math. J. (2)},
    VOLUME = {70},
      YEAR = {2018},
    NUMBER = {4},
     PAGES = {511--521},
}

\bib{HMX13}{article}{
    AUTHOR = {Hacon, C.},
    AUTHOR = {McKernan, J.},
    AUTHOR = {Xu, C.},
   title={On the birational automorphisms of varieties of general type},
   journal={Ann. of Math. (2)},
   volume={177},
   date={2013},
   number={3},
   pages={1077--1111},
}

\bib{HLS19}{article}{
   AUTHOR={Han, J.},
    AUTHOR={Liu, J.},
    AUTHOR={Shokurov, V. V.},
     TITLE = {ACC for minimal log discrepancies of exceptional singularities},
   JOURNAL = {arXiv:1903.04338},
      YEAR = {2019},
   }

   \bib{Jia19}{article}{
    AUTHOR = {Jiang, C.},
     TITLE = {Boundedness of $\mathbb{Q}$-Fano varieties with degrees and alpha-invariants bounded from below},
   journal={Ann. Sci. \'{E}c. Norm. Sup\'{e}r. (4)},
   volume={53},
   date={2020},
   number={5},
   pages={1235--1248},
   }

\bib{JM12}{article}{
    AUTHOR = {Jonsson, M.},
    AUTHOR={Musta\c t\u a, M.},
     TITLE = {Valuations and asymptotic invariants for sequences of ideals},
   JOURNAL = {Ann. Inst. Fourier (Grenoble)},
  FJOURNAL = {Universit\'e de Grenoble. Annales de l'Institut Fourier},
    VOLUME = {62},
      YEAR = {2012},
    NUMBER = {6},
     PAGES = {2145--2209 (2013)},
   }

	 \bib{Kol13}{book}{
    AUTHOR = {Koll{\'a}r, J.},
     TITLE = {Singularities of the minimal model program},
    series= {Cambridge Tracts in Math.},
    VOLUME = {200},
      NOTE = {With the collaboration of S{\'a}ndor Kov{\'a}cs},
 PUBLISHER = {Cambridge Univ. Press},
   ADDRESS = {Cambridge},
      YEAR = {2013},
   }
   
	 \bib{Kol19}{book}{
    AUTHOR = {Koll{\'a}r, J.},
     TITLE = {Families of varieties of general type},
    series= {Book to appear},
      note={See \href{https://web.math.princeton.edu/~kollar/book/modbook20170720-hyper.pdf}{\textsf{Online preprint}}},
       YEAR = {2020},
   }   
  
\bib{KM98}{book}{
   author={Koll{\'a}r, J.},
   author={Mori, S.},
   title={Birational geometry of algebraic varieties},
   series={Cambridge Tracts in Math.},
   volume={134},
   note={With the collaboration of C. H. Clemens and A. Corti},
    publisher={Cambridge Univ. Press},
   place={Cambridge},
   date={1998},
   pages={viii+254},
}

  \bib{Li17}{article}{
  author={Li, C.},
   title={ K-semistability is equivariant volume minimization},
 JOURNAL = {Duke Math. J.},
  FJOURNAL = {Duke Mathematical Journal},
    VOLUME = {166},
      YEAR = {2017},
    NUMBER = {16},
     PAGES = {3147--3218},
      ISSN = {0012-7094},
}

   \bib{Li18}{article}{
  author={Li, C.},
   title={Minimizing normalized volumes of valuations},
   journal={Math. Zeit.},
    VOLUME = {289},
      YEAR = {2018},
    NUMBER = {1-2},
     PAGES = {491--513},
  date={2018},
}

% \bib{LLX18}{article}{
%     AUTHOR = {Li, C.},
%   AUTHOR={Liu, Y.},
%     AUTHOR={Xu, C.},
%      TITLE = {A guided tour to normalized volumes},
%   JOURNAL = { arXiv:1806.07112},
%       YEAR = {2018},
% }

 \bib{LWX19}{article}{
  author={Li, C.},
  author={Wang, X.},
  author={Xu, C.},
   title={ On the proper moduli spaces of smoothable K\"ahler-Einstein Fano varieties},
   journal={Duke Math. J.},
   VOLUME = {168},
 YEAR = {2019},
    NUMBER = {8},
     PAGES = {1387-1459},
}     

\bib{LWX18}{article}{
author={Li, C.},
author={Wang, X.},
author={Xu, C.},
title={Algebraicity of the metric tangent cones and equivariant K-stability},
%   journal={arXiv:1805.03393, to appear in J. Amer. Math. Soc.},
    journal = {J. Amer. Math. Soc.},
   volume={34},
   date={2021},
   number={4},
   pages={1175--1214},
}

\bib{LX14}{article}{
    AUTHOR = {Li, C.},
    AUTHOR={Xu, C.},
     TITLE = {Special test configuration and {K}-stability of {F}ano       varieties},
   JOURNAL = {Ann. of Math. (2)},
     VOLUME = {180},
      YEAR = {2014},
    NUMBER = {1},
     PAGES = {197--232},
}

\bib{LX17}{article}{
    AUTHOR = {Li, C.},
    AUTHOR={Xu, C.},
     TITLE = { Stability of Valuations: Higher Rational Rank },
       VOLUME = {1},
    NUMBER = {1},
     PAGES = {1--79},
 JOURNAL = {Peking Math. J. },
      YEAR = {2018},
}

\bib{LX16}{article}{
    AUTHOR = {Li, C.},
    AUTHOR={Xu, C.},
     TITLE = { Stability of Valuations and Koll\'ar Components },
        JOURNAL = {J. Eur. Math. Soc. (JEMS)},
  FJOURNAL = {Journal of the European Mathematical Society (JEMS)},
    VOLUME = {22},
      YEAR = {2020},
    NUMBER = {8},
     PAGES = {2573--2627},
}

\bib{LXZ21}{article}{
    AUTHOR = {Liu, Y.},
    AUTHOR = {Xu, C.}
    AUTHOR = {Zhuang, Z.}
    TITLE = {Finite generation for valuations computing stability thresholds and applications to K-stability}
    JOURNAL = {arXiv:2102.09405}
    YEAR = {2021}
}

%  \bib{Liu18}{article}{
%   author={Liu, Y.},
%   TITLE = {The volume of singular {K}\"ahler--{E}instein {F}ano varieties},
%   JOURNAL = {Compos. Math.},
%   FJOURNAL = {Compositio Mathematica},
%     VOLUME = {154},
%       YEAR = {2018},
%     NUMBER = {6},
%      PAGES = {1131--1158},}

% \bib{Lun73}{article} {
%     AUTHOR = {Luna, Domingo},
%      TITLE = {Slices \'{e}tales},
%  BOOKTITLE = {Sur les groupes alg\'{e}briques},
%      PAGES = {81--105. Bull. Soc. Math. France, Paris, M\'{e}moire 33},
%       YEAR = {1973},
% }

\bib{Muk03}{book}{
    AUTHOR = {Mukai, Shigeru},
     TITLE = {An introduction to invariants and moduli},
    SERIES = {Cambridge Studies in Advanced Mathematics},
    VOLUME = {81},
      NOTE = {Translated from the 1998 and 2000 Japanese editions by W. M.
              Oxbury},
 PUBLISHER = {Cambridge University Press, Cambridge},
      YEAR = {2003},
     PAGES = {xx+503},
      ISBN = {0-521-80906-1},
}

\bib{GIT}{book}{
    AUTHOR = {Mumford, D.},
    Author = {Fogarty, J.},
    Author = {Kirwan, F.},
     TITLE = {Geometric invariant theory},
    SERIES = {Ergebnisse der Mathematik und ihrer Grenzgebiete (2) [Results
              in Mathematics and Related Areas (2)]},
    VOLUME = {34},
   EDITION = {Third},
 PUBLISHER = {Springer-Verlag, Berlin},
      YEAR = {1994},
     PAGES = {xiv+292},
      ISBN = {3-540-56963-4},
}

\bib{Poi13}{article}{
    AUTHOR = {Poineau, J.},
     TITLE = {Les espaces de {B}erkovich sont ang\'{e}liques},
   JOURNAL = {Bull. Soc. Math. France},
  FJOURNAL = {Bulletin de la Soci\'{e}t\'{e} Math\'{e}matique de France},
    VOLUME = {141},
      YEAR = {2013},
    NUMBER = {2},
     PAGES = {267--297},
}

% \bib{Oda15}{article}{
 %   AUTHOR = {Odaka, Y.},
 %    TITLE = {Compact moduli spaces of {K}\"ahler-{E}instein {F}ano varieties},
 %  JOURNAL = {Publ. Res. Inst. Math. Sci.},
 % FJOURNAL = {Publications of the Research Institute for Mathematical         Sciences},
  %  VOLUME = {51},
   %   YEAR = {2015},
  %  NUMBER = {3},
   %  PAGES = {549--565},}

\bib{Sho92}{article}{
    AUTHOR = {Shokurov, V. V.},
     TITLE = {Three-dimensional log perestroikas},
   JOURNAL = {Izv. Ross. Akad. Nauk Ser. Mat.},
  FJOURNAL = {Rossi\u{\i}skaya Akademiya Nauk. Izvestiya. Seriya
              Matematicheskaya},
    VOLUME = {56},
      YEAR = {1992},
    NUMBER = {1},
     PAGES = {105--203},
      ISSN = {1607-0046},
}

\bib{Tia87}{article}{
   author={Tian, Gang},
   title={On K\"{a}hler-Einstein metrics on certain K\"{a}hler manifolds with
   $C_1(M)>0$},
   journal={Invent. Math.},
   volume={89},
   date={1987},
   number={2},
   pages={225--246},
   issn={0020-9910},
}

 \bib{Tia97}{article}{
    AUTHOR = {Tian, Gang},
     TITLE = {K\"ahler-{E}instein metrics with positive scalar curvature},
   JOURNAL = {Invent. Math.},
    VOLUME = {130},
      YEAR = {1997},
    NUMBER = {1},
     PAGES = {1--37},
}

% \bib{Tia15}{article}{
%     AUTHOR = {Tian, G.},
%      TITLE = {K-stability and {K}\"ahler-{E}instein metrics},
%   JOURNAL = {Comm. Pure Appl. Math.},
%   FJOURNAL = {Communications on Pure and Applied Mathematics},
%     VOLUME = {68},
%       YEAR = {2015},
%     NUMBER = {7},
%      PAGES = {1085--1156},
%       }

\bib{Xu19}{article}{
author={Xu, C.},
title={A minimizing valuation is quasi-monomial},
journal={Ann. of Math. (2)},
   VOLUME = {191},
      YEAR = {2020},
    NUMBER = {3},
     PAGES = {1003--1030},
}

\bib{Xu20}{article}{
author={Xu, C.},
   title={K-stability of Fano varieties: an algebro-geometric approach},
   journal={EMS Surv. Math. Sci.},
   volume={8},
   date={2021},
   number={1-2},
   pages={265--354},
}

\bib{XZ19}{article}{
author={Xu, C.},
author={Zhuang, Z.},
   title={On positivity of the CM line bundle on K-moduli spaces},
   journal={Ann. of Math. (2)},
   volume={192},
   date={2020},
   number={3},
   pages={1005--1068},
}

\bib{ZZ19}{article}{
author={Zhou, C.},
author={Zhuang, Z.},
title={Some criteria for uniform K-stability},
journal={arXiv:1907.05293, to appear in Math. Res. Lett.},
year={2019},
}

\end{biblist}
\end{bibdiv}
\bigskip

\end{document}